\NewDocumentCommand{\labelstyle}{m}
    {
        \grill_convert_name:n { #1 }
        {\l_Grill_label_tl}
    }
\newcommand*{\theorembookmark}{%
  %\label{\@currentlabelname}
  \bookmark[
    dest=\@currentHref,
    rellevel=1,
    keeplevel,
    italic,
  ]{%
    \thmt@thmname\space\csname the\thmt@envname\endcsname
    \ifx\@currentlabelname\@empty
    \else
      \space(\@currentlabelname)%
    \fi
  }%
}
\declaretheorem[title=Theorem, postheadhook=\theorembookmark]{theorem}
\declaretheorem[title=Definition, postheadhook=\theorembookmark, sibling=theorem]{definition}
\declaretheorem[title=Lemma, postheadhook=\theorembookmark, sibling=theorem]{lemma}
\declaretheorem[title=Example, postheadhook=\theorembookmark, sibling=theorem]{example}
\declaretheorem[title=Remark, postheadhook=\theorembookmark, sibling=theorem]{remark}
\declaretheorem[title=Proposition, postheadhook=\theorembookmark, sibling=theorem]{proposition}
\title{\huge Rewriting Graphically with Symmetric~Traced~Monoidal~Categories}
\author{\Large George Kaye}
\affil{School of Computer Science, University of Birmingham, UK}
\date{March 19, 2021}
\begin{document}

\maketitle

\begin{abstract}
  \noindent We examine a variant of hypergraphs that we call \emph{interfaced linear hypergraphs}, with the aim of creating a sound and complete graphical language for symmetric traced monoidal categories (STMCs) suitable for graph rewriting. 
  In particular, we are interested in rewriting for categorical settings with a \emph{Cartesian} structure, such as digital circuits.
  These are incompatible with previous languages where the trace is constructed using a compact closed or Frobenius structure, as combining these with Cartesian product can lead to degenerate diagrams.
  Instead we must consider an approach where the trace is constructed as an atomic operation.
  Interfaced linear hypergraphs are defined as regular hypergraphs in which each vertex is the source and target of exactly one edge each, equipped with an additional interface edge. The morphisms of a freely generated STMC are interpreted as interfaced linear hypergraphs, up to isomorphism (soundness). 
  Moreover, any linear hypergraph is the representation of a unique STMC morphism, up to the equational theory of the category (completeness).
  This establishes interfaced linear hypergraphs as a suitable combinatorial language for STMCs. 
  We then show how we can apply the theory of adhesive categories to our graphical language, meaning that a broad range of equational properties of STMCs can be specified as a graph rewriting system. 
  The graphical language of digital circuits is presented as a case study.
\end{abstract}

\section{Introduction}

Constructors, architects, and engineers have always enjoyed using blueprints, diagrams, floorplans and other kinds of graphical representations of their designs.
These are often more than simply illustrations aiding the understanding of a formal specification, they are the specification itself.
By contrast, in mathematics, diagrams have not been traditionally considered first-class citizens, although they are often used to help the reader \emph{visualise} a construction or a proof.
However, the development of new \emph{formal} diagrammatic languages for a variety of systems such as quantum communication and computation~\cite{coecke2018picturing}, computational linguistics~\cite{coecke2010mathematical} and signal-flow graphs~\cite{bonchi2014categorical,bonchi2015abstraction}, proved that diagrams can be used not just to aid understanding of proofs, but also to formulate proofs.
This formulation has multifaceted advantages, from enabling the use of graph-theoretical techniques to aid reasoning~\cite{ghica2017diagrammatic} to making the teaching of algebraic concepts to younger students less intimidating~\cite{ghica18mt}.

These graphical languages build on a mathematical infrastructure of (usually symmetric and strict) monoidal categories~\cite{joyal1991geometry}, and in particular compact closed categories~\cite{kelly1980coherence}.
Systems modelled by morphisms in a compact closed category have a general notion of \emph{interface port}, so that any two ports can be connected, provided the types match.
This allows compact closed categories to describe systems with a flexible and refined notion of \emph{causality}, such as quantum systems~\cite{kissinger2017causal} or games~\cite{castellan2016causality}.
In contrast, systems such as digital circuits have a stricter notion of causality, enforcing that connections may only happen between ports with the same type but opposite input-output polarities.
This requires a different kind of categorical setting, that of a \emph{symmetric traced monoidal category}~\cite{hasegawa2009traced}, or STMC.
These categories come equipped with an explicit construct (the trace) to model causal feedback loops.

\emph{String diagrams}~\cite{selinger2010survey} are becoming the established mathematical language of diagrammatic reasoning, whereby equal terms are usually interpreted as isomorphic (or isotopic) diagrams. 
While this is enough for reasoning about \textit{structural} properties, properties which have computational content require a \textit{rewriting} of the diagram. 
To make this possible, diagrams must be represented as combinatorial objects, such as graphs or hypergraphs, which have enough structure. 
The framework of \textit{adhesive categories} is of particular interest to us~\cite{lack2004adhesive}, as it implies that graph rewriting is always well-defined. 

Our main motivation is to fully formalise prior work on diagrammatic reasoning for digital circuits~\cite{ghica2016categorical,ghica2017diagrammatic}, for which we need a string diagram language of STMCs along with adhesive categorical infrastructure for rewriting.  
It might seem that this is a solved problem, as combinatorial languages for graph rewriting have already been studied as \textit{open graphs}~\cite{dixon2013open,kissinger2012pictures} and \textit{hypergraphs}~\cite{bonchi2016rewriting,zanasi2017rewriting,bonchi2018rewriting}, which satisfy soundness and completeness. 
However, the completeness theorem raises for us insurmountable technical problems, which we set to overcome in this paper. 
In \textit{loc.{}cit.{}} STMCs are constructed by embedding them into the more expressive setting of a SMC equipped with a \textit{Frobenius structure}, which induces a compact closed structure into which an STMC can be embedded. 
To reason about digital circuits we require the framework of \textit{dataflow categories}, which are STMCs in which the monoidal tensor is a Cartesian product~\cite{cazanescu1994feedback,hasegawa1997recursion}. 
It is the interaction between the diagonal morphism of the Cartesian product and the Frobenius structure which is problematic. 

In general it is well known that in compact closed categories finite products automatically become \textit{biproducts}~\cite{houston2008finite}. 
This is enough to compromise the construction as a setting for modelling digital circuits, which do not physically satisfy the equational properties of a biproduct. 
But the problem runs deeper, as the Frobenius structure itself is not compatible with Cartesian product, as seen in the diagram below:
\begin{figure}[ht]
    \centering
    \includestandalone[scale=1]{tikz/category/compact-closed/trace-from-frobenius}
    \,\,
    \raisebox{1em}{$\stackrel{\text{Frob}}{=}$}
    \,\,
    \includestandalone[scale=1]{tikz/category/compact-closed/trace-from-frobenius-cc}
    \,\qquad\,
    \includestandalone[scale=1]{tikz/category/compact-closed/trace-from-frobenius}
    \,\,
    \raisebox{1em}{$\stackrel{\text{Cart}}{=}$}
    \,\,
    \includestandalone[scale=1]{tikz/category/compact-closed/trace-from-frobenius-traced}
\end{figure}

On the left, the Frobenius structure equates the splitting and joining of the wires with a feedback loop, implementing a trace structure. 
On the right, the splitting of the wires copies the co-unit of the Frobenius co-monoid, resulting in a degenerate circuit. 
To solve this problem we need to define the trace structure directly, and prove soundness and definability for these direct definitions. 

Besides the major problem above, there are some small technical issues with hypergraphs that we solve by reintroducing the concept of homeomorphism similar to that used in framed point graphs~\cite{kissinger2012pictures}. 
This allows us to represent the trace of the identity, which is not well-formed in vanilla hypergraphs as it is a closed loop of wires.
It also means we can identify a matching of a subgraph $F$ in a graph $\trace{x}{F}$ by using a monomorphism, which is essential for performing double pushout (DPO) graph rewriting.

The primary contributions of this paper are therefore as follows: we refine the definition of hypergraphs in \cite{bonchi2016rewriting} in order to define a \emph{sound} and \emph{complete} graphical language for STMCs that does not become degenerate in the presence of Cartesian structure.
We show that this language can be used with the framework of adhesive categories, so any additional axioms can be expressed as graph rewrite rules without any ambiguity.
This allows us to make the proofs in~\cite{ghica2017diagrammatic} rigorous.

\subsection{Structure of the report}

The structure of the report is as follows. 
In \S\ref{sec:monoidal-categories} we recap the required background on monoidal categories, and in particular Cartesian and symmetric traced monoidal categories, the primary focus of our work.
In \S\ref{sec:hypergraphs} we introduce a standard definition of hypergraphs, and then refine this to obtain \emph{linear hypergraphs}, which are motivated by our study of string diagrams. 
\S\ref{sec:operations} details several hypergraph constructs and operations that will be of use to us. 
We then use these ingredients in \S\ref{sec:soundness} to show that we can represent morphisms in a free PROP (a category of PROducts and Permutations, where objects are natural numbers) as hypergraphs.
We take the opposite perspective in \S\ref{sec:completeness}, to show that we can also recover categorical terms from hypergraphs, enabling us to conclude both soundness and completeness. 
In \S\ref{sec:graph-rewriting} we study graph rewriting, a useful application of our graphical language, and in \S\ref{sec:case-study} follow with a case study into the axioms related to digital circuits. 
Finally in \S\ref{sec:generalisation} we generalise our approach to consider terms from any STMC, not just PROPs. 
The finer details of some of the more bureaucratic proofs can be found in the appendices.

\subsection{Notation}
Let $|X|$ be the cardinality of a set $X$.
We write $[n] \subset \nat$ as the subset of the natural numbers containing $0, 1, \cdots , n-1$. 
For two sets $X$ and $Y$, let $X + Y$ be their disjoint union and $X - Y$ be the relative complement of $Y$ in $X$. 
Let $(X, \torder[X])$ be a totally ordered set, where usually we will just write the carrier $X$. 
We take the convention that the order is defined by the order the elements are written, i.e. in $\{x,y,z\}$, $x < y < z$ and for $\{x,y\} \cup \{z,w\}, x < y < z < w$.
We use $\proj{i}$ as a `projection' function to denote the $i$th element of a totally ordered set. 
For two functions $\morph{f}{X}{Y}$ and $\morph{g}{U}{V}$, we denote as $\morph{f + g}{X + U}{Y + V}$ their disjoint union that acts as $f$ on elements of $X$ and as $g$ on elements of $U$. 

\section{Monoidal categories}\label{sec:monoidal-categories}

We begin by recapping the concepts of monoidal categories.
A category $\mcc$ is a collection of objects $A,B,C,...$ with morphisms $f,g,h,...$ between them. 
A morphism $f$ between objects $A$ and $B$ is denoted $\morph{f}{A}{B}$. 
The morphisms between each pair of objects $A \to B$ form a \emph{hom-set}, denoted $\mcc(A,B)$. 
Each object is equipped with an identity morphism $\morph{\id_A}{A}{A}$. 
Morphisms can be composed sequentially: if we have morphisms $\morph{f}{A}{B}$ and $\morph{g}{B}{C}$ we also have the morphism $\morph{f \seq g}{A}{C}$. 
Composition is associative ($f \seq (g \seq h) = (f \seq g) \seq h$) and unital ($\id \seq f = f = f \seq \id$).
In the language of string diagrams, we represent morphisms as boxes, and composition by horizontal juxtaposition.
The identity is drawn as an empty wire.
Equal morphisms in the category correspond to isomorphic diagrams -- `only connectivity matters'.

\begin{center}
  \includestandalone{tikz/category/f-labelled}
  \quad\quad
  \includestandalone{tikz/category/g-labelled}
  \quad\quad
  \includestandalone{tikz/category/category/composition-labelled}
  \quad\quad
  \raisebox{0.4em}{\includestandalone{tikz/category/category/id-solo}}
\end{center}

\noindent A \textit{monoidal} category \cite{joyal1991geometry} introduces a new binary operation known as the \textit{monoidal tensor}, denoted $- \tensor -$. 
The unit object of the monoid is denoted $I$.
Much like sequential composition, the tensor is associative ($(f \tensor g) \tensor h = f \tensor (g \tensor h)$) and unital with respect to the identity of the unit object ($\id[I] \tensor f = f = f \tensor \id[I]$).
When we write categorical terms, $\tensor$ binds tighter than $\seq$, so $f \tensor g \seq h \tensor k$ should be read as $(f \tensor g) \seq (h \tensor k)$.
Graphically, the tensor is drawn as vertical juxtaposition and the unit object is drawn as `empty space'.

\begin{center}
  \includestandalone{tikz/category/monoidal/f}
  \quad\quad
  \includestandalone{tikz/category/monoidal/g}
  \quad\quad
  \raisebox{-1.25em}{\includestandalone{tikz/category/monoidal/parallel}}
  \quad\quad
  \includestandalone{tikz/category/monoidal/id}
\end{center}

\noindent The addition of tensor means that there are multiple ways in which we can compose morphisms in sequence or in parallel that lead to equal terms.
This is known as \emph{functoriality}, and can be expressed as the following axiom: $(f \seq g) \tensor (h \seq k) = f \tensor h \seq g \tensor k$.
Functoriality means that using the one dimensional algebraic notation can obfuscate the true nature of the inherently two dimensional structure.
This is especially important computationally, as numerous extra operations must be performed to manipulate a term appropriately.
Fortunately, the graphical notation eliminates this overhead, as both terms correspond to the same diagram:

\begin{center}
  \includestandalone{tikz/category/monoidal/functoriality-seq}
\end{center}

\noindent To acquire a framework suitable for modelling systems, we need a way of crossing over the wires in our diagrams.
This is achieved by equipping each pair of objects $A,B$ in our category with a \emph{symmetry} $\morph{\swap{A}{B}}{A \tensor B}{B \tensor A}$.
A category in \textit{symmetric} monoidal category (or SMC), and braidings are called \textit{symmetries}.
The symmetry satisfies the axioms of \emph{naturality} $f \seq g \seq \swap{B}{D} = \swap{A}{C} \seq f \tensor g$, hexagon $\swap{A}{B} \tensor \id[C] \seq \id[A] \tensor \swap{B}{C} = \swap{A}{B \tensor C}$ and self-inverse $\swap{A}{B} \seq \swap{B}{A} = \id[A] \tensor \id[B]$, illustrated below.

\begin{center}
  \includestandalone[scale=0.84]{tikz/category/symmetric/naturality}
  \quad
  \raisebox{0.1em}{\includestandalone[scale=0.84]{tikz/category/symmetric/hexagon}}
  \quad
  \raisebox{0.5em}{\includestandalone[scale=0.84]{tikz/category/symmetric/self-inverse}}
\end{center}

\noindent We are particularly interested in \textit{free} monoidal categories, where morphisms, or `terms', are generated over a \textit{monoidal signature} $\Sigma = (\Sigma_O,\Sigma_M)$: a set of object variables and morphism variables (\emph{generators}), equipped with functions $\morph{\mf{dom},\mf{cod}}{\Sigma_M}{\Sigma_O^\mon}$, where $\Sigma_O^\mon$ is a list of object variables, denoting the domain and codomain of each generator. 
Effectively, generators are the building blocks from which we can form categorical terms, by composing generators in sequence or parallel with each other, identity morphisms and symmetries.
For example, the free monoidal category generated over the signature $\Sigma = \{\morph{f}{X}{B \tensor C}, \morph{g}{B \tensor A}{X}\}$ contains the following term: \begin{center}
  $f \tensor \id[A] \seq \id[B] \tensor \swap{C}{A} \seq g \tensor \id[C]$

  \vspace{0.5em}

  \includestandalone[scale=1]{tikz/category/string-diagram-labelled}
\end{center}

\begin{lemma}[Staging]\label{lem:staging}
  Any morphism $f \in \termtype$ can be written as in the form $f = f_0 \seq f_1 \seq \,\, \cdots \,\, \seq f_n$, where $f_i$ is a tensor containing only one non-identity morphism, $f_i = \id[p] \tensor k \tensor \id[q]$.
\end{lemma}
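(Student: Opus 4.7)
The plan is to proceed by structural induction on the term $f$, with bifunctoriality (Proposition \ref{prop:bif}) supplying the key rewriting principle. In each case the goal is to exhibit a presentation of $f$ as a sequential composition of stages of the form $f_i = \id_{m_i} \tensor k_i \tensor \id_{n_i}$.

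The base cases are straightforward. If $f = \id_n$, take the empty composition ($x = 0$). If $f$ is a generator $\phi$ or a symmetry $\swap{a}{b}$, then $f$ already has the form $\id_0 \tensor f \tensor \id_0$ and constitutes a single stage on its own.

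For the inductive cases, composition $f = g \seq h$ is handled by concatenating the staged forms of $g$ and $h$ supplied by the induction hypothesis; associativity of $\seq$ keeps the result in staged form. For tensor $f = g \tensor h$ with $g \colon a \to b$ and $h \colon c \to d$, I would first apply bifunctoriality to write $f = (g \tensor \id_c) \seq (\id_b \tensor h)$. By the induction hypothesis $g = g_0 \seq \cdots \seq g_{x-1}$ with each $g_i = \id_{m_i} \tensor k_i \tensor \id_{n_i}$, and padding each stage on the right by $\id_c$ preserves the staged form since $g_i \tensor \id_c = \id_{m_i} \tensor k_i \tensor \id_{n_i + c}$. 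Bifunctoriality lets me distribute this padding across the composition: $g \tensor \id_c = (g_0 \tensor \id_c) \seq \cdots \seq (g_{x-1} \tensor \id_c)$. A symmetric argument (padding on the left) handles $\id_b \tensor h$, and concatenation completes the case. Finally, for a trace $f = \trace{x}{g}$, I simply observe that $\trace{x}{g}$ is itself a morphism of the appropriate type and may be taken as a single stage $\id_0 \tensor \trace{x}{g} \tensor \id_0$, since the statement only requires $k$ to be non-identity, not atomic.

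I do not anticipate any serious obstacle. The only slightly delicate point is the tensor case, where one must verify that padding a staged form by an identity on either side again yields a staged form; this is immediate but genuinely depends on bifunctoriality to commute the padding past the sequential composition, which is why the reliance on Proposition \ref{prop:bif} is essential rather than cosmetic.
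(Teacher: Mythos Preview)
Your proposal is correct and is essentially a fleshed-out version of the paper's one-line proof (``By left/right identity and functoriality''): the structural induction you give makes explicit exactly how those two axioms are deployed, with bifunctoriality doing the real work in the tensor case. Your handling of trace as a single stage is a legitimate reading of the lemma as stated (since $k$ is only required to be non-identity, not atomic); in the paper's later uses the lemma is always applied after global trace has already removed traces, so this shortcut never matters in practice.
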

\begin{proof}
  By functoriality and unitality.
\end{proof}

\noindent A useful class of symmetric monoidal categories are called \emph{PROPs} (PROduct and Permutation categories), categories with natural numbers as objects and addition as tensor product.
These are especially natural with regards to graphical notation as an object $n \in \nat$ can be drawn as $n$ wires.

\begin{lemma}[Composite symmetry]
  Any symmetry $\swap{m}{n}$ in a free PROP can be expressed as a combination of multiple symmetries $\swap{1}{1}$ and identities.
\end{lemma}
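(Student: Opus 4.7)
The plan is to mirror the inductive definition of the composite swap hypergraph (Definition~\ref{def:composite-swap}) at the level of categorical terms. Since a PROP is a strict symmetric monoidal category generated from the single object $1$ under $\tensor$, every object $m$ is literally $\underbrace{1 \tensor \cdots \tensor 1}_{m}$ and $\swap{m}{n}$ is determined by the coherence of the symmetric structure from the base instance $\swap{1}{1}$. The proof is therefore a straightforward induction.

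I would proceed by induction on $m + n$. For the base cases, note that $\swap{0}{n} = \id_n$ and $\swap{m}{0} = \id_m$ (since the tensor unit is strict), and $\swap{1}{1}$ is the desired generator itself; in each of these cases the claim holds trivially because the term is either the generator or an identity. For the inductive step, I would treat three cases, matching the recursive clauses used for hypergraphs: $\swap{m+1}{1}$, $\swap{1}{n+1}$, and $\swap{m+1}{n+1}$. In each case, naturality of symmetry in a symmetric monoidal category gives the identical decomposition used in Definition~\ref{def:composite-swap}, namely
\[
\swap{m+1}{1} = \id_m \tensor \swap{1}{1} \seq \swap{m}{1} \tensor \id_1, \qquad \swap{1}{n+1} = \swap{1}{n} \tensor \id_1 \seq \id_n \tensor \swap{1}{1},
\]
\[
\swap{m+1}{n+1} = \id_1 \tensor \swap{1}{n} \tensor \id_1 \seq \swap{m}{n} \tensor \swap{1}{1} \seq \id_n \tensor \swap{m}{1} \tensor \id_1.
\]
Each right-hand side is built from strictly smaller symmetries (in the sense of $m+n$), which by the inductive hypothesis are already expressible using only $\swap{1}{1}$ and identities, together with $\seq$ and $\tensor$; hence so is $\swap{m+1}{n+1}$.

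The step which requires slight care is justifying those decompositions categorically rather than graphically, since the lemma lives in $\termtype$. However, they are exactly the standard coherence identities for symmetries in a symmetric monoidal category (they state precisely that $\swap{-}{-}$ is natural and that its action on a tensor of objects factors through the basic symmetry), and in a strict setting they are forced by the free construction of the PROP. I would either cite these as coherence facts for SMCs or verify them directly by observing that both sides implement the same permutation on the generators $1$ of the PROP and invoking the universal property of the free traced PROP $\termtype$.

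The main obstacle, insofar as there is one, is bookkeeping: choosing the induction parameter so that all three cases genuinely decrease ($m+n$ works), and being explicit that ``combination of multiple symmetries $\swap{1}{1}$ and identities'' means closure under $\seq$ and $\tensor$. Once this is fixed, the argument is mechanical and parallels exactly the hypergraph-level derivation already given.
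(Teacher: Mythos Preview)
Your proposal is correct and is essentially an explicit unfolding of the paper's one-line proof, which simply reads ``By the hexagon axiom.'' Your inductive decompositions are precisely what repeated application of the hexagon axiom (together with the unit laws giving $\swap{0}{n}=\id_n$ and $\swap{m}{0}=\id_m$) yields, so the two arguments coincide in substance; you have just written out the induction that the paper leaves implicit.
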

\begin{proof}
  By the hexagon axiom.
\end{proof}

\subsection{Symmetric traced monoidal categories}

So far, the wires in our string diagrams have only travelled in one direction across the page: from left to right. 
However to model some systems we may want to `bend' these wires, such as to model feedback. 
A common way of doing this is to use a \emph{compact closed category} \cite{kelly1980coherence}, in which every object $A$ has a \emph{dual} $A^\mon$, drawn as a wire travelling from right to left.
Each object is also equipped with additional structural morphisms known as the $\morph{cup}{A^\mon \tensor A}{I}$ and the $\morph{cap}{I}{A \tensor A^\mon}$ for `bending' wires.
However, this setting is not suitable for all applications.
In a compact closed category there is a flexible notion of causality, where morphisms do not have so much a notion of input and output but rather a bidirectional \emph{interface port}.
Instead, we may wish to enforce a strict notion of causality, where only outputs of morphisms can connect to inputs.
To do this, we must look at a flavour of monoidal categories known as \textit{symmetric traced monoidal categories} (or STMCs for short), which were introduced by \citet{joyal1996traced} and refined by \citet{hasegawa2009traced}. 

An STMC is an SMC with an extra family of operations known as \textit{trace operators}. For a morphism $\morph{f}{X \tensor A}{X \tensor B}$, we can \textit{trace} it to form the morphism $\morph{\text{Tr}_{A,B}^{X}(f)}{A}{B}$. 
We will often drop the subscript for clarity when there is no ambiguity. 
A trace is represented graphically by `bending around' one of the output wires to join up with one of the input wires. 
This enables wires to travel in the opposite direction for a period, but all wires must still be oriented left-to-right when interacting with morphisms, as shown below:

\begin{center}
    \includestandalone{tikz/category/trace-string-diagram-1}
    \hspace{1em}
    \raisebox{10pt}{$\xrightarrow{\vtrace{X}{A}{B}{f}}$}
    \hspace{1em}
    \includestandalone{tikz/category/trace-string-diagram-2}
\end{center}

\noindent There are several (equivalent) formulations of the axioms of STMCs, but here we present the four detailed by Hasegawa in \cite{hasegawa2009traced}.

\vspace{1em}

\noindent \textbf{Tightening}

\begin{center}
    $\vtrace{X}{A}{D}{\id_X \tensor g \seq f \seq \id[X] \tensor h} = g \seq \vtrace{X}{B}{C}{f} \seq h$

    \vspace{0.5em}
  
    \includestandalone{tikz/category/traced/tightening}
  \end{center}

\noindent \textbf{Yanking}

\begin{center}
    $\vtrace{X}{X}{X}{\swap{X}{X}} = \id[X]$

    \vspace{0.5em}  

    \includestandalone{tikz/category/traced/yanking}
  \end{center}

\noindent \textbf{Superposing}

  \begin{center}
    $\vtrace{X}{A \tensor C}{B \tensor C}{f \tensor \id[C]} = \vtrace{X}{A}{B}{f} \tensor \id[C]$

    \vspace{0.5em}  

    \includestandalone{tikz/category/traced/superposing}
  \end{center}

\noindent \textbf{Exchange}

  \begin{center}
    $\vtrace{Y}{A}{B}{\vtrace{X}{Y \tensor A}{Y \tensor B}{f}} = \vtrace{X}{A}{B}{\vtrace{Y}{X \tensor A}{X \tensor B}{\swap{Y}{X} \tensor \id[A] \seq f \seq \swap{X}{Y} \tensor \id[B]}}$

    \vspace{0.5em}  

    \includestandalone{tikz/category/traced/exchange}
  \end{center}

\noindent As with regular symmetric monoidal categories, we can generate \emph{free} STMCs over a given signature with the addition of the trace operator.
For example, the free STMC defined over $\Sigma$ contains the following term:

\begin{center}
  $\trace{X}{\fork \tensor \id[A] \seq \id[B] \tensor \swap{D}{A} \seq \join \tensor \id[D]}$

  \vspace{0.5em}

  \includestandalone[scale=1]{tikz/category/string-diagram-traced-labelled}
\end{center}

\noindent We can also derive one other important lemma that holds in any free STMC.

\begin{lemma}[Global trace]
  For any morphism $f \in \ntermtype$, we can represent it as $\trace{x}{\hat{f}}$, where $\hat{f}$ is a morphism containing no trace.
\end{lemma}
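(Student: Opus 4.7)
The plan is to proceed by induction on the structure of $f$, using the STMC axioms to bubble every trace occurring in the term up to the outermost level, combining them into a single trace via the vanishing axiom. Since $f$ lives in the freely generated traced PROP, it is built from generators, identities, symmetries, and the three operations $\seq$, $\tensor$, $\trace{-}{-}$, so structural induction is available.

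For the base cases (generators, identities, symmetries) there is no trace, so we can take $x = 0$ and $\hat{f} = f$, applying $\trace{0}{-} = \text{id}$ (vanishing for $n=0$). The inductive step splits into three cases. First, I would handle the trace case: if $f = \trace{y}{g}$ and by inductive hypothesis $g = \trace{x'}{\hat{g}}$ with $\hat{g}$ trace-free, then $\trace{y}{g} = \trace{y}{\trace{x'}{\hat{g}}} = \trace{y+x'}{\hat{g}}$ by (iterated) vanishing. Second, for the tensor case $f = g \tensor h$ with $g = \trace{x_g}{\hat{g}}$ and $h = \trace{x_h}{\hat{h}}$, I would apply superposing twice (once on each side, using a symmetry to swap the two sets of traced wires to adjacent positions) to pull both traces to the outside, producing a single trace of width $x_g + x_h$ over an expression involving $\hat{g}$, $\hat{h}$, and some symmetries.

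Third, and the hardest case, is composition $f = g \seq h$ with $g = \trace{x_g}{\hat{g}}$ and $h = \trace{x_h}{\hat{h}}$. The strategy is to use tightening (also called naturality of trace in the category axioms) to pull the trace on $g$ outside the composition: $\trace{x_g}{\hat{g}} \seq h = \trace{x_g}{\hat{g} \seq (\id_{x_g} \tensor h)}$. Then applying superposing inside the trace to $h = \trace{x_h}{\hat{h}}$ gives $\id_{x_g} \tensor \trace{x_h}{\hat{h}} = \trace{x_h}{\id_{x_g} \tensor \hat{h}}$, so $g \seq h = \trace{x_g}{\hat{g} \seq \trace{x_h}{\id_{x_g} \tensor \hat{h}}}$. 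A second application of tightening pulls the inner $\trace{x_h}{-}$ out, and vanishing then merges the two nested traces into a single $\trace{x_g + x_h}{-}$ over a trace-free term.

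The main obstacle is really bookkeeping: each use of tightening and superposing requires the ``traced'' wires and the ``non-traced'' wires to be in the right positions, so symmetries must be inserted and their resulting complications absorbed into $\hat{f}$. To keep the argument clean I would rely on the staging lemma (Lemma~\ref{lem:staging}) and the fact that symmetries and identities are themselves trace-free, so any such bookkeeping symmetries can be safely folded into $\hat{f}$ without disrupting the invariant. The result then follows by induction, with $x$ being the sum of the widths of all traces in the original term.
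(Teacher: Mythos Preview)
Your approach is correct and is exactly the paper's: the paper's proof is the one-liner ``By superposing and tightening'', and your structural induction is the natural elaboration of that hint. One small caveat: the displayed equation $\id_{x_g} \tensor \trace{x_h}{\hat{h}} = \trace{x_h}{\id_{x_g} \tensor \hat{h}}$ is not literally an instance of superposing (the traced wires sit on the wrong side of the tensor), but as you yourself anticipate, conjugating by symmetries repairs this, and the inserted swaps are trace-free and can be absorbed into $\hat{f}$.
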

\begin{proof}
  By superposing and tightening.
\end{proof}

\subsection{Monoidal theories}

On their own, the axioms of symmetric traced monoidal categories are not particularly interesting.
To model systems we need to impose additional structure on our categories, which can be done with the introduction of new axioms.
A \textit{monoidal theory} is a monoidal signature equipped with a set of equations: pairs of terms with equal domain and codomain, e.g. for generators $\morph{f,g}{A}{A}$, a suitable equation could be $f \seq g = g \seq f$.
Well-known monoidal theories include those of commutative monoids, Frobenius monoids and non-commutative monoids, which contain various combinations of generators for forking and splitting wires (see \cite[Example 2.1]{bonchi2016rewriting} for details).
Monoidal theories can also be used to model the operational semantics of compositional systems: the generators are the building blocks of that system and the axioms represent the operational semantics that we can use to reduce complex systems into simpler ones.
In \S\ref{sec:case-study} we will examine a theory at the centre of our research, that of \emph{digital circuits}.
For now, we will present an example that motivate our work.

\subsubsection{Example: Cartesian categories}\label{sec:cartesian}

A \emph{Cartesian category} is a symmetric monoidal category where each object is equipped with a \emph{diagonal} morphism $\morph{\ccopy{A}}{A}{A \tensor A}$, and where the unit object is \emph{terminal}: for every object $A$, there is a unique morphism $\morph{\cdel{A}}{A}{I}$.
\begin{center}
  \includestandalone{tikz/category/cartesian/diagonal}
  \qquad
  \includestandalone{tikz/category/cartesian/delete}
\end{center}
In essence, a Cartesian category is a monoidal category in which the tensor product is the Cartesian product.
In the Cartesian monoidal theory, the families of diagonals and terminal morphisms are the generators; the accompanying axioms can be seen in Table~\ref{table:cartesian}.

\begin{table}
  \centering
  \begin{tabular}{ll}
  Naturality axioms & \\
  \hline \\
  $f \seq \ccopy{B} = \ccopy{A} \seq f \tensor f$ & $A \to B \tensor B$ \\
  $f \seq \cdel{B} = \cdel{A}$ & $A \to 0$ \\ \\
  Commutative comonoid axioms & \\
  \hline \\
  $\ccopy{A} \seq \ccopy{A} \tensor A = \ccopy{A} \seq (A \tensor \ccopy{A})$ & $A \to A \tensor A \tensor A$ \\
  $\ccopy{A} \seq (\cdel{A} \tensor A) = A$ & $A \to A$ \\
  $\ccopy{A} \seq (A \tensor \cdel{A})$ & $A \to A$ \\
  $\ccopy{A} \seq \swap{A}{A} = \ccopy{A}$ & $A \to A \tensor A$ \\ \\
  Coherence axioms & \\
  \hline \\
  $\ccopy{0} = 0$ & $0 \to 0$ \\
  $\ccopy{A \tensor B} \seq (A \tensor \swap{A}{B} \tensor B) = \ccopy{A} \tensor \ccopy{B}$ & $A \tensor B \to A \tensor B \tensor A \tensor B$ \\
  $\cdel{A} = 0$ & $0 \to 0$ \\
  $\cdel{A \tensor B} = \cdel{A} \tensor \cdel{B}$ & $A \tensor B \to 0$
  \end{tabular}
  \caption{The axioms for a Cartesian monoidal category~\cite{selinger2010survey}}
  \label{table:cartesian}
\end{table}

Cartesian categories that are also traced are known as \emph{dataflow categories}~\cite[\S6.4]{selinger2010survey}.
The interaction of the trace with the Cartesian product is especially interesting, as it admits a \emph{fixpoint operator}, as noticed by Hasegawa~\cite{hasegawa1997recursion} and Martin Hyland independently.
Equivalent observations had also been made before the introduction of traced monoidal categories, such as by Bloom and Ésik~\cite{bloom1993iteration} and Ştefǎnescu~\cite{stefanescu2000network}.

\begin{theorem}[\texorpdfstring{Trace-fixpoint correspondence~\cite{hasegawa1997recursion}}{Trace-fixpoint correspondence}]
  A Cartesian category $\mcc$ is traced if and only if it has a family of functions 
  $\morph{\fix{-}}{\mcc(A \tensor X,X)}{\mcc(A,X)}$

  \vspace{0.5em}

  \begin{center}
    \includestandalone{tikz/category/cartesian/fixpoint}
  \end{center}
  such that the following axioms are satisfied:
\end{theorem}

\noindent\textbf{Naturality}
\begin{center}
  $\fix{\id[X] \tensor g \seq f} = g \seq \fix{f}$

  \vspace{0.5em}

  \includestandalone{tikz/category/cartesian/naturality}
\end{center}

\noindent\textbf{Dinaturality}
\begin{center}
  $\fix{\ccopy{X \tensor A} \seq g \tensor \cdel{X} \tensor \id[A] \seq f} = \ccopy{A} \seq \fix{\ccopy{X \tensor A} \seq g \tensor \cdel{X} \seq f} \tensor \id[A] \seq f$

  \vspace{0.5em}

  \raisebox{0.5em}{\includestandalone{tikz/category/cartesian/dinaturality-l}}
  \quad\quad \raisebox{2.5em}{$=$} \quad\quad
  \includestandalone{tikz/category/cartesian/dinaturality-r}
\end{center}

\noindent\textbf{Diagonal}
\begin{center}
  $\fix{\ccopy{X} \tensor \id[A] \seq f} = \fix{\fix{f}}$

  \vspace{0.5em}

  \raisebox{0.5em}{\includestandalone{tikz/category/cartesian/diagonal-l}}
  \quad\quad \raisebox{2.5em}{$=$} \quad\quad
  \includestandalone{tikz/category/cartesian/diagonal-r}
\end{center}

\noindent We can use the fixpoint operator to model \emph{feedback} in our systems.
In particular, we can derive the slightly simpler \emph{fixed-point equation} from the dinaturality axiom~\cite{hasegawa2009traced} that allows us to `unfold' the fixpoint.

\begin{center}
  $f^\dagger = \ccopy{A} \seq f^\dagger \tensor \id[A] \seq f$

  \vspace{0.5em}

  \raisebox{0.5em}{\includestandalone{tikz/category/cartesian/iterator}} 
  \quad\quad
  \raisebox{1.5em}{$=$}
  \quad\quad
  \includestandalone{tikz/category/cartesian/fixpoint-unfolded}
\end{center}

\subsubsection{Graphical reasoning with monoidal theories}

The reason that graphical languages are so useful when dealing with monoidal categories is that the axioms are absorbed into the notation, and the tedious bureaucracy is eliminated.
Unfortunately, once we start adding extra structure this starts to fall apart.
For example, take the example of the naturality of the Cartesian diagonal.

\begin{center}
  \includestandalone{tikz/category/cartesian/naturality-copy}
\end{center}

\noindent Clearly, this axiom cannot be absorbed by the graphical notation: even the number of boxes differs!
To tackle these axioms, we must consider diagrams not just up to isomorphism, but up to \emph{rewriting}.
To do this, we must move away from the topological string diagrams and towards a more combinatorial diagram, where vertices and edges are explicitly defined.
With these diagrams we can perform \emph{graph rewriting}, of which numerous formalisms and frameworks exist \cite{ehrig1973graph,ehrig1991parallelism,lack2004adhesive}.

As we have already observed in the introduction, this is not a new endeavour: previously this has been studied with string graphs \cite{dixon2013open,kissinger2012pictures} and hypergraphs \cite{bonchi2016rewriting,zanasi2017rewriting,bonchi2018rewriting,bonchi2020string}.
However, these are rooted in compact closed categories, which are incompatible with the Cartesian product.
This is because the Cartesian product automatically becomes a \emph{biproduct} in a compact closed setting~\cite{houston2008finite}, which is not always suitable (e.g. in the category of digital circuits detailed in \S\ref{sec:case-study}).
Therefore, we will need to define a slightly different combinatorial structure.

\section{Hypergraphs}\label{sec:hypergraphs}

We begin by recalling a standard notion of hypergraphs in which edges have ordered sources and targets, as in \cite{bonchi2016rewriting}.
Let $\atoms$ be a countably infinite set of \emph{atoms} (or \emph{names}, in the sense of~\cite{pitts2013nominal}). 

\begin{definition}[Hypergraph]\label{def:hypergraph}
  A hypergraph is a tuple $H~=~(V,E,\esources,\etargets)$ where
  \begin{itemize}
    \item $V \subset \atoms$ is a finite set of vertices.
    \item $E$ is a set containing, for each $k,l\in\nat$, finite sets $E[k,l]$ of hyperedges with $k$ sources and $l$ targets.
    \item $\esources,\etargets$ are families of functions denoting sources and targets of edges, i.e. for each $E[k,l]$:
    \begin{itemize} 
      \item for each $i < k$, there exists the $i$th source map $\morph{\esources[i]}{E[k,l]}{V}$ 
      \item for each $i < l$, there exists the $j$th target map $\morph{\etargets[i]}{E_{k,l}}{V}$ 
  \end{itemize}
\end{itemize}
\end{definition}

\noindent We call the \emph{in-degree}, written $\inputs{v}$ (resp. \emph{out-degree}, written $\outputs{v}$) of a vertex the number of edges it is the target (resp. source) of.
We call a hypergraph \emph{discrete} if it has no edges.
To reduce our use of space, for a hypergraph $H = (V,E,\esources,\etargets)$ we will often use $V_H,E_H$ etc. to access members of the tuple.

A \emph{hypergraph signature} is a set of labels $\hypsig$ equipped with functions $\morph{\mf{dom},\mf{cod}}{\hypsig}{\nat}$.
A \emph{labelled hypergraph} over signature $\hypsig$ is a hypergraph $H = (V,E,\esources,\etargets)$ and a labelling function $\morph{\labels}{\edges}{\hypsig}$, such that for any $e \in \edges$, if $\labels(e) = l$ then $\dom{l} = |\esources(e)|$ and $\cod{l} = |\etargets(e)|$.

\begin{example}\label{ex:hypergraph}

  Below there is an informal drawing of a hypergraph over the hypergraph signature \[\hypsig~=~\{\morph{\fork}{1}{2},~\morph{\join}{2}{1}\}.\]
  Vertices are drawn as black dots. 
  Edges are drawn as boxes, with ordered sources and targets connected on the left and right respectively.

  \begin{center}
    \begin{minipage}{0.6\textwidth}
      \begin{gather*}
        V = \{v_0,v_1,v_2,v_3,v_4\} \qquad E[1,2] = \{e_0,e_2\} \qquad E[2,1] = \{e_1\} \\
        \esources[0](e_0) = v_0 \quad \etargets[0](e_0) = v_2 \quad \etargets[1](e_0) = v_3 \\
        \esources[0](e_1) = v_0 \quad \esources[1](e_1) = v_1 \quad \etargets[0](e_1) = v_3 \\
        \esources[0](e_2) = v_3 \quad \etargets[0](e_2) = v_4 \quad \etargets[1](e_2) = v_1 \\
        \labels = \{e_0 \mapsto {\fork}, e_1 \mapsto {\join}, e_2 \mapsto {\fork}\}
      \end{gather*}
    \end{minipage}
    \begin{minipage}{0.35\textwidth}
      \begin{center}
        \includesvg[scale=0.75]{example-simple-annotated}
      \end{center}
    \end{minipage}
  \end{center}
\end{example}

\noindent\textbf{Category.}
A labelled hypergraph homomorphism $\morph{h}{F}{G}$ consists of functions $\morph{\vmap{h}}{V_F}{V_G}$ and, for each $k,l \in \nat$, $\morph{\emap{h}}{E_F[k,l]}{E_G[k,l]}$ such that sources, targets and labels are preserved. 

\begin{center}
  \includestandalone{tikz/hyp-homo/simple-sources-pres}
  \includestandalone{tikz/hyp-homo/simple-targets-pres}
  \includestandalone{tikz/hyp-homo/labels-pres}
\end{center}

\noindent If $\vmap{h}$ and $\emap{h}$ are bijective then $F$ and $G$ are isomorphic $F \equiv G$. 
It is immediate that $\equiv$ is an equivalence relation, and we quotient hypergraphs by it.

Hypergraph homomorphisms are the morphisms in the category of hypergraphs $\hyp$, a functor category~\cite{bonchi2016rewriting}.
Hypergraph signatures $\hypsig$ can be seen as hypergraphs, with a vertex $v$ and edges for each label $m \to n$ in the signature, with $v$ appearing $m$ (resp. $n$) times in its sources (resp. targets). 
Thus labelled hypergraphs are defined as a slice category. 

\begin{definition}[\texorpdfstring{Category of hypergraphs~\cite{bonchi2016rewriting}}{Category of hypergraphs}]\label{def:hyp}
  Let $\hyp$ be the functor category $[\textbf{X},\textbf{Set}]$, where $\textbf{X}$ has as objects pairs of natural numbers $(m,n)$ and an extra object $\star$. For each object $x = (m,n)$, there are $m+n$ arrows from $x$ to $\star$. Let $\labhyp = \hyp / \hypsig$ be the slice category over a hypergraph signature $\hypsig$.
\end{definition}

\noindent We call a hypergraph homomorphism an \emph{embedding} if its components are injective.

\begin{lemma}\label{lem:hyp-monos}
  A morphism in $\labhyp$ is a monomorphism if and only if is an embedding.
\end{lemma}
\begin{proof}
  For a morphism $\morph{m}{F}{G}$ to be mono, for any two morphisms $\morph{p,q}{H}{F}$ (for any other hypergraph $H$), if $m \circ p = m \circ q$ then $p = q$. First we show that if $m$ is an embedding it must be mono. If we consider each equivalence map of $m$ separately, this means that we must show that $\vmap{m} \circ \vmap{p} = \vmap{m} \circ \vmap{q}$ implies $\vmap{p} = \vmap{q}$ (and the same for $\emap{m}$). But we have assumed that $\vmapt{m}$ is injective, so the antecedent reduces to $\vmapt{g} = \vmapt{h}$. So $m$ is mono.

    Conversely, if $m$ is not an embedding, it cannot be a monomorphism. A morphism that is not an embedding maps multiple vertices or edges into one. Therefore for a morphism $\morph{m}{F}{G}$ that maps vertices $v_1$ and $v_2$ in $F$ to $v$ in $G$, there exist two morphisms $\morph{p,q}{G}{F}$ (in $p$, $v \mapsto v_1$ and in $q$, $v \mapsto v_2$), and similar for morphisms that map multiple edges to one. Therefore there exist $p,q$ such that $p \neq q$, so $m$ is not mono.
\end{proof}

\subsection{Linear hypergraphs}\label{sec:linear-hypergraphs}

In hypergraphs, vertices can connect to an arbitrary number of edges.
However, to make wires in string diagrams split or join, an additional Frobenius structure must be imposed.
This structure works particularly well in the framework of compact closed categories, but this is a structure which we aim to avoid. 
Therefore we must restrict hypergraphs so that the in-degree and out-degree of each vertex is at most one: vertices with in-degree $0$ represent the inputs of the term and vertices with out-degree $0$ represent the outputs of the term.
We call a hypergraph \emph{linear} if this condition is satisfied.

While we identified the input and output vertices of the hypergraph above, they are not ordered, and thus we do not have a true `interface'.
One option is to identify the interfaces by means of certain (ordered) cospans, as in \cite{bonchi2016rewriting}, but we take an alternative approach in which we build our interfaces directly into our hypergraphs by means of an additional interface edge $\interface$.
We write the set of edges and this interface as $E + 1$. 

We could simply add this edge to our existing definition of hypergraphs.
However, we wish to define a \emph{sound and complete} graphical language: we want \emph{every} diagram to correspond to a term in our category.
Therefore we take this opportunity to reformulate our definition of hypergraphs, yielding \emph{interfaced linear hypergraphs}.
In Section~\ref{sec:graph-rewriting} we shall see how our definition can be related the more traditional definition.

\begin{definition}[Interfaced linear hypergraph]\label{def:interfaced-linear-hypergraphs}
    An interfaced linear hypergraph is a tuple $H = (\edges,\vs,\vt,\vconnsr)$ where
    \begin{itemize}
        \item A finite set $\edges \subset \atoms$ of edges
        \item $\vs,\vt$ are finite sets of containing, for each edge $e \in \edges + 1$, finite totally ordered sets of source and target vertices $\vs[e], \vt[e] \subset \atoms$, such that for any $V_1,V_2 \in \vs \cup \vt$,  $V_1 \cap V_2 = \emptyset$.
        \item $\morph{\vconnsr}{\bigcup_{e\in\edges}\vt[e]}{\bigcup_{e\in\edges}\vs[e]}$ is a connections bijection between targets and sources.
    \end{itemize}
\end{definition}

\noindent We split vertices into sets of sources $S$ and targets $T$, with a connections bijection $\vconnsr$ between them.
The ordering of sources and targets of edges is determined by the order of the sets.
Splitting the vertices in this way allows us to enforce that each vertex is the source and target of only one edge while still retaining the order of sources and targets for each edge.
It also simplifies the operations defined below: for example, when we compose hypergraphs we `coalesce' the outputs of $F$ and inputs of $G$ together, as explained in Section~\ref{sec:sequential}.
With one set of vertices, we could delete the outputs of $F$ and the inputs of $G$, and then define `fresh' vertices as the bridge between the two hypergraphs.
However, we would have to redefine the orders on the vertices such that the ordering on the non-interface edges was preserved.
Keeping sources and targets separate eliminates this problem.
To simplify notation when talking about members of the source and target sets, we will use lower case variables to denote a single vertex, i.e. $s \in \vs$ means $s \in \bigcup_{e \in E}\vs[e]$.
$\vt[\interface]$ is the set of \emph{inputs}, and $\vs[\interface]$ is the set of of \emph{outputs}.

For an interfaced linear hypergraph $H$ with $m$ inputs and $n$ outputs, we write it as $\morph{H}{m}{n}$, where $m \to n$ is the \emph{type} of the hypergraph.
As with simple hypergraphs, we can define \emph{labelled linear hypergraphs} over a signature $\hypsig$ with labelling function $\labels$, where $\labels(e) = \phi$ is valid only if $|\vs[e]| = \dom{\phi}$ and $|\vt[e]| = \cod{\phi}$.

\begin{example}
    A linear hypergraph over $\hypsig$ can be drawn in two ways, illustrated below. 
    In a more formal notation (left), edges are stacked with their ordered source (resp. target) vertex sets on the left (resp. right) of the diagram.
    Connections are represented by the arrow on the far right.
    We represent the inputs (resp. outputs) of the term as incident to a grey edge labelled $\einput$ (resp. $\eoutput$).

    A more intuitive representation (right) is similar to how we drew hypergraphs earlier, where we draw connected target and source vertices as a single black dot. 
    The orders on the vertices dictate the position of each vertex's connection to an edge. 
    The more formal representation can be unambiguously recovered from the more intuitive one. 
    \vspace{-2em}

    \begin{center}
    \begin{minipage}{0.4\textwidth}
      \begin{gather*}
        E = \{e_0, e_1\} \\ 
        \vt[\interface] = \{t_0\} \\ 
        \vs[e_0] = \{s_0\} \quad \vt[e_0] = \{t_1,t_2\} \\ 
        \vs[e_1] = \{s_1,s_2\} \quad \vt[e_1] = \{t_3\} \\ 
        \vs[\interface] = \{s_3\} \\
        \vconnsr = \{t_0 \mapsto s_2, t_1 \mapsto s_1, t_2 \mapsto s_3, t_3 \mapsto s_0\} \\
        \labels = \{e_0 \mapsto {\fork}, e_1 \mapsto {\join}\}
      \end{gather*}
    \end{minipage}
    \quad
    \raisebox{-0.8em}{\begin{minipage}{0.5\textwidth}
      \raisebox{-0em}{\includesvg[scale=0.6]{example-formal}}
      \qquad
      \raisebox{2em}{\includesvg[scale=0.6]{example-annotated}}
    \end{minipage}}
  \end{center}
\end{example}

\vspace{1em}

\noindent\textbf{Category.} A (labelled) linear hypergraph homomorphism $\morph{h}{F}{G}$ consists of functions 
\[\morph{\vmaps{h}}{\bigcup_{e \in E_F} \vs_F[e]}{\bigcup_{e \in E_G} \vs_G[e]} \qquad \morph{\vmapt{h}}{\bigcup_{e \in E_F} \vt_F[e]}{\bigcup_{e \in E_G} \vt_G[e]} \qquad \morph{\emap{h}}{E_F}{E_G}\] between sources, targets and edges, such that the first four diagrams below commute.
If $\vmapt{h}$, $\vmaps{h}$ and $\emap{h}$ are bijective and the latter two diagrams below also commute, then $F$ and $G$ are \emph{isomorphic} written $F \equiv G$.
It is immediate that $\equiv$ is an equivalence relation, and we quotient labelled interfaced linear hypergraphs by it.

\begin{center}
  $\underbrace{
      \overbrace{
          \includestandalone[scale=1]{tikz/hyp-homo/sources-pres} \,
          \includestandalone[scale=1]{tikz/hyp-homo/targets-pres} \,
          \raisebox{-0.1em}{\includestandalone[scale=1]{tikz/hyp-homo/conns-pres}} \,
          \includestandalone[scale=1]{tikz/hyp-homo/labels-pres} \,
      }^{\text{homomorphism}}
      \includestandalone[scale=1]{tikz/hyp-homo/inputs-pres} \,
      \includestandalone[scale=1]{tikz/hyp-homo/outputs-pres}
  }_{\text{equivalence}}$
\end{center}

\noindent Labelled interfaced linear hypergraphs form a category $\lilhyp$ with objects the labelled interfaced linear hypergraphs over signature $\hypsig$ and morphisms the labelled interfaced linear hypergraph homomorphisms.

\begin{lemma}\label{lem:lilhyp-monos}
  A morphism in $\lilhyp$ is mono if and only if it is an embedding.
\end{lemma}
\begin{proof}
  As with simple hypergraphs (Lemma \ref{lem:hyp-monos}).
\end{proof}

\noindent We will now use the term `hypergraph' to mean `interfaced linear hypergraph' unless specified.

\section{Operations and constructs}\label{sec:operations}

We can create hypergraphs compositionally using the operations of an STMC: composition, monoidal tensor, symmetry and trace.
In this section we will detail their definitions, in addition to some other important components of our hypergraph framework.

When performing operations, it is imperative that our hypergraphs do not become degenerate.
We call a hypergraph \emph{well-formed} if for any $V_1,V_2 \in \vs \cup \vt$, $V_1 \cap V_2 = \emptyset$, $\vconnsr$ is bijective, and the labelling condition is satisfied.
Some of the more bureaucratic proofs in this section have been omitted: to find them the interested reader can turn to Appendix~\ref{sec:appendix}.

\subsection{Equivariance}\label{sec:equivariance}

When performing operations on hypergraphs, the vertices and edges of the hypergraphs involved must be disjoint so that we do not create degenerate hypergraphs. 
However, this is not always the case, such as when composing a hypergraph with itself. 
Fortunately, since the sets of vertices and edges are subsets of the countably infinite set of atoms $\atoms$, we can simply \emph{rename} the problematic edges or vertices~\cite{pitts2013nominal}. 

\begin{definition}[Action]\label{def:action}
  For any permutation $\morph{\rename}{\atoms}{\atoms}$, an action $\rename \permaction -$ acts as follows:

  \begin{description}
      \item[Element] For any elements $x \in \atoms$, $y \not\in \atoms$, $\rename \permaction x = \rename(x)$ and $\rename \permaction y = y.$
      \item[Set] For any set $X$, $\rename \permaction X = \{\rename \permaction x \,|\, x \in X\}.$ 
      \item[Totally ordered sets] As with regular sets, preserving the order i.e. if $x < y$ then $\rename \permaction x < \rename \permaction y$  
      \item[Function] For any function $\morph{f}{X}{Y}$, $(\rename \permaction f)(v) = \rename \permaction f(\renameinv \permaction v).$
  \end{description}

\end{definition}

\begin{definition}[Renaming]\label{def:renaming}
  For any labelled interfaced linear hypergraph \[F = \lilhyper\] and for any permutation $\morph{\rename}{\atoms}{\atoms}$ we can apply $\rename$ to $F$ to rename it: 
  \[\rename \permaction F = (\rename \permaction E, \rename \permaction \vs, \rename \permaction \vt, \rename \permaction \vconnsr, \rename \permaction \labels).\] 
\end{definition}

\begin{proposition}[Equivariance of hypergraphs]\label{prop:equivariance}
  For any labelled interfaced linear hypergraph $\morph{F}{m}{n}$ and permutation $\morph{\rename}{\atoms}{\atoms}$, $\rename \permaction F \equiv F$.
\end{proposition}
\begin{proof}
  \[\vmapt{h}(v) = \renameinv \permaction v \qquad \vmaps{h}(v) = \renameinv \permaction v \qquad \emap{h}(e) = \renameinv \permaction e\]
\end{proof}

\noindent Therefore the definition of $F$ is equivariant under name permutations, so we are justified in renaming vertices and edges `on the fly'. 
Since we use graphs up to isomorphism, this will implicitly also quotient by equivariance.

\subsection{Composition}\label{sec:sequential}

To compose hypergraphs sequentially, we `redirect' any vertices that connected to the output of the first hypergraph to those originally connected to the input of the second hypergraph.
Graphically, we juxtapose the hypergraphs horizontally:

\begin{center}
  \includesvg[scale=0.6]{operations/sequential/sequential}
\end{center}

\begin{definition}[Composition]\label{def:composition}
  For any two labelled interfaced linear hypergraphs $\morph{F}{m}{n}$ and $\morph{G}{n}{p}$ over signature $\Sigma$:
  \[\morph{F}{m}{n} = \lilhyper[F] \qquad \morph{G}{n}{p} = \lilhyper[G]\]
  \noindent we define their composition as follows.
  \[\morph{H}{m}{p} = F \seq G = \lilhyper[H]\]
\end{definition}

\noindent The new set of edges is simply the disjoint union of the edges in $F$ and $G$, and we do the same for the labelling function.
To obtain the new sets of vertices, we delete the outputs of $F$ and the inputs of $G$.
\begin{gather*}
  E_H = E_F + E_G \qquad \labels_H = \labels_F + \labels_G \\
  \vs[\interface][H] = \vs[\interface][G] \qquad 
  \vs[e \in E_F][H] = \vs[e][F] \qquad
  \vs[e \in E_G][H] = \vs[e][G] \\
  \vt[\interface][H] = \vt[\interface][F] \qquad
  \vt[e \in E_F][H] = \vt[e][F] \qquad 
  \vt[e \in E_G][H] = \vt[e][G]
\end{gather*}
The connections function maps a vertex connected to the $i$th output of $F$ to the vertex connected to the $i$th input of $G$.
\[\vconnsr_H(v) = \begin{cases}
  \vconnsr_G(\proj{i}(\vt[\interface][G])) & \text{if}\ \vconnsr_F(v) = \proj{i}(\vs[\interface][F]) \\
  \vconnsr_F + \vconnsr_G & \text{otherwise}
\end{cases}\]
This can be drawn formally as follows:

\begin{center}
  \includesvg[scale=0.6]{operations/sequential/sequential-formal}
\end{center}

\begin{proposition}[Well-formedness of composition]\label{prop:well-formed-composition}
  For two labelled interfaced linear hypergraphs $\morph{F}{m}{n}$ and $\morph{G}{n}{p}$, $F \seq G$ is a well-formed labelled interfaced linear hypergraph.
\end{proposition}
\begin{proof}
  We have only removed vertices so the sources and targets must still be disjoint.
  The only change in the connections function means that the vertices that originally connected to the output vertices of $F$ (which have been deleted) now connect to the sources originally connected to the input vertices of $G$ (which have also been deleted), so $\vconnsr$ is bijective.
  The incidence of vertices on regular edges is also unaffected, so the labelling condition is satisfied.
\end{proof}

\noindent The unit of composition is the identity hypergraph, a hypergraph where all vertices are the sources and the targets of the interface.
Below are examples for $n = 1$ and $n=2$.

\begin{center}
  \includesvg[scale=0.6]{operations/sequential/identity} \qquad   \includesvg[scale=0.6]{operations/sequential/identity-2}
\end{center}

\begin{definition}[Identity hypergraph]\label{def:identity-hypergraph}
  An identity hypergraph $\morph{\id[n]}{n}{n}$ over signature $\Sigma$ is defined as
  \[\id[n] = (\{\vs[\interface]\}, \{\vt[\interface]\}, \emptyset, \vconnsr, \emptyset)\] 
  where $\vs[\interface], \vt[\interface] \subset \atoms$ are finite disjoint totally ordered sets, $|\vs[\interface]| = |\vt[\interface]| = n$, and $\vconnsr(\proj{i}(\vt[\interface])) = \proj{i}(\vs[\interface])$.
\end{definition}

\subsection{Monoidal tensor}\label{sec:monoidal-tensor}

We can also compose hypergraphs in parallel, which is known as their \emph{monoidal tensor}.
We simply combine their input and outputs, and leave everything else untouched.
Graphically, we can represent this by juxtaposing them vertically.

\begin{center}
  \includesvg[scale=0.6]{operations/parallel/parallel}
\end{center}

\begin{definition}[Monoidal tensor]\label{def:monoidal-tensor}
  For any two labelled interfaced linear hypergraphs $\morph{F}{m}{n}$ and $\morph{G}{p}{q}$ over a signature $\Sigma$:
  \[F = \lilhyper[F] \qquad G = \lilhyper[G]\]
  we define their monoidal tensor as follows.
  \[\morph{H}{m+p}{n+q} = F \tensor G = \lilhyp[H]\]
\end{definition}

\noindent Once again, the edges and labels are the union of those in $F$ and $G$.
\[E_H = E_F + E_G \qquad \labels_H = \labels_F + \labels_G \]
We do not need to delete any vertices, only combine the interfaces.
\[\vs[e \in E_F][H] = \vs[e][F] \quad \vs[e \in E_G][H] = \vs[e][G] \quad \vs[\interface][H] = \vs[\interface][F] + \vs[\interface][G]\]
\[\vt[e \in E_F][H] = \vt[e][F] \quad \vt[e \in E_G][H] = \vt[e][G] \quad \vt[\interface][H] = \vt[\interface][F] + \vt[\interface][G]\]
Subsequently the connections function is just the union of those in $F$ and $G$.
\[\vconnsr_H = \vconnsr_F + \vconnsr_G\]
This can be drawn formally as follows:

\begin{center}
  \includesvg[scale=0.6]{operations/parallel/parallel-formal}
\end{center}

\begin{proposition}[Well-formedness of tensor]\label{prop:well-formed-tensor}
  For any two labelled interfaced linear hypergraphs $\morph{F}{m}{n}$ and $\morph{G}{p}{q}$, $F \tensor G$ is a well-formed labelled interfaced linear hypergraph.
\end{proposition}
\begin{proof}
  We have not added any new vertices, so the sources and targets are still disjoint.
  The connections of each hypergraph are unaffected, so $\vconnsr$ is a bijection.
  Likewise, we have not interfered with the sources and targets of regular edges, so the labelling condition is satisfied.
\end{proof}

\noindent The unit of monoidal tensor is the empty hypergraph (an identity hypergraph on $0$).
This is simply a hypergraph with no edges or vertices.
Graphically this is represented as two empty interfaces.

\begin{center}
  \includesvg[scale=0.6]{operations/parallel/empty}
\end{center}

\begin{definition}[Empty hypergraph]\label{def:empty-hypergraph}
  The empty hypergraph $\morph{\id[0]}{0}{0}$ over signature $\Sigma$ is defined as 
  \[\id[0] = (\emptyset,\emptyset,\emptyset,\emptyset,\emptyset)\]
\end{definition}

\noindent $- \tensor -$ is a bifunctor, so there may be multiple orders in which we can perform sequential composition or monoidal tensor that still result in the same hypergraph.

\begin{proposition}[Bifunctoriality I]\label{prop:bifunctoriality-1}
  For any $m,n \in \nat$, $\id[m] \tensor \id[n] \equiv \id[{m+n}]$
\end{proposition}

\begin{center}
  \includesvg[scale=0.6]{axioms/bifunctoriality-1}
\end{center}

\begin{proposition}[Bifunctoriality II]\label{prop:bifunctoriality-2}
  For any labelled interfaced linear hypergraphs $\morph{F}{m}{n}$, $\morph{G}{r}{s}$, $\morph{H}{n}{p}$, $\morph{K}{s}{t}$, $F \tensor G \seq H \tensor K \equiv (F \seq H) \tensor (G \seq K)$.
\end{proposition}

\begin{center}
  \includesvg[scale=0.6]{axioms/bifunctoriality-2}
\end{center}

\subsection{Symmetry}\label{sec:symmetry}

To swap the orders of vertices in the interfaces, we require a new construct, named the \emph{swap} hypergraph. 
This hypergraph swaps over two wires.

\begin{center}
  \includesvg[scale=0.6]{operations/symmetry/symmetry}
\end{center}

\begin{definition}[Swap hypergraph]\label{def:swap-hypergraph}
  The swap hypergraph for two wires $\swap{1}{1}$ is defined as
  \[\swap{1}{1} = (\emptyset, \{\vs[\interface]\}, \{\vt[\interface]\}, \vconnsr, \emptyset)\]
  \noindent where $\vs[\interface] = \{\mf{c},\mf{d}\}$, $\vt[\interface] = \{\mf{a}, \mf{b}\}$, $\mf{a},\mf{b},\mf{c},\mf{d} \in \atoms$, $\vconnsr(\mf{a}) = \mf{d}$, $\vconnsr(\mf{b}) = \mf{c}$. 
\end{definition}

\noindent By composing multiple copies of the swap hypergraph in sequence and parallel we can build up constructs in which we swap many wires.

\begin{definition}[Composite swap]\label{def:composite-swap}
  For any $m,n \in \nat$, we can define a composite swap hypergraph as follows. \[\morph{\swap{m}{n}}{m~+~n}{n~+~m}\]
\end{definition}

\begin{center}
  \[\swap{0}{n} = \id[n]\]
  \includesvg[scale=0.6]{operations/symmetry/symmetry-0-n}
  \[\swap{m}{0} = \id[m]\]
  \includesvg[scale=0.6]{operations/symmetry/symmetry-m-0}
  \[\swap{1}{1} = \swap{1}{1}\]
  \includesvg[scale=0.6]{operations/symmetry/symmetry}
  \[\swap{m+1}{1} = m \tensor \swap{1}{1} \seq \swap{m}{1} \tensor 1\]
  \includesvg[scale=0.6]{operations/symmetry/symmetry-m+1-1}
  \[\swap{1}{n+1} = \swap{1}{n} \tensor 1 \seq n \tensor \swap{1}{1}\]
  \includesvg[scale=0.6]{operations/symmetry/symmetry-1-n+1}
  \[\swap{m+1}{n+1} = 1 \tensor \swap{1}{n} \tensor 1 \seq \swap{m}{n} \tensor \swap{1}{1} \seq n \tensor \swap{m}{1} \tensor 1\]
  \includesvg[scale=0.6]{operations/symmetry/symmetry-m+1-n+1}
\end{center}

\noindent For some proofs, it may be preferential to represent composite swaps in a non-inductive way, and instead think in terms of swapping the sets of the input and output vertices.

\begin{lemma}[Alternate swap]\label{lem:alternate-swap}
  For any $m,n\in\nat$, aa composite swap hypergraph $\swap{m}{n}$ can be written in the form \[\swap{m}{n} = (\emptyset, \{\vs[\interface]\}, \{\vt[\interface]\}, \vconnsr, \emptyset)\] where $\mf{A},\mf{B},\mf{C},\mf{D} \subset \atoms$ are disjoint sets such that $|\mf{A}| = |\mf{D}| = m$, $|\mf{B}| = |\mf{C}| = n$, $\vs[\interface] = \mf{C} + \mf{D}$, $\vs[\interface] = \mf{A} + \mf{B}$, and \[\vconnsr(\proj{i}(\mf{A})) = \proj{i}(\mf{D}) \qquad \vconnsr(\proj{i}(\mf{B})) = \proj{i}(\mf{C})\]
\end{lemma}

\noindent Composite swap hypergraphs are \emph{natural}: we can `push through' hypergraphs composed on either side.

\begin{proposition}[Naturality of swap]\label{prop:naturality-swap}
  For $m,n,p,q \in \nat$ and labelled interfaced linear hypergraphs $\morph{F}{m}{n}$ and $\morph{G}{p}{q}$, \[F \tensor G \seq \swap{n}{q} \equiv \swap{m}{p} \seq G \tensor F\]
  
  \begin{center}
    \includesvg[scale=0.6]{axioms/naturality-swap}
  \end{center}

\end{proposition}

\subsection{Homeomorphism}\label{sec:homeomorphism}

The operations so far have been fairly straightforward.
However, a subtlety arises when we consider the trace.
A naive approach to performing $\trace{x}{F}$ would be to take the first $x$ inputs and outputs and join them together.
Now consider the trace of the identity: one might assume that $\trace{x}{\id[x]} = \id[0]$ as it is simply a closed loop and does not `affect' the term per se, but this is is not always the case \cite[\S6.1]{hasegawa2012models}.
So we cannot discard these loops, but we cannot represent closed loops in vanilla hypergraphs as vertices can only connect to edges.

This issue arises because we have `absorbed' the identity morphisms, so to solve this problem we introduce the notion of \emph{homeomorphism} to create \emph{identity edges} $1 \to 1$, drawn as grey diamonds.
We write the set of identity edges in a hypergraph as $E[\id]$, and as such our hypergraph tuple becomes $H = (E, E[\id], \vs,\vt,\vconnsr,\labels)$.
The sources and targets of these identity edges must be preserved by homomorphism.
In general, we can introduce or remove identity edges at will by performing an \emph{expansion} or \emph{smoothing} respectively.

\begin{center}
  \includesvg[scale=0.6]{operations/homeomorphism/expansion-smoothing}
\end{center}

\noindent The only exception is when the source and target vertex of the identity edge are connected.
Performing a smoothing here would create an invalid hypergraph.

\begin{center}
  \includesvg[scale=0.6]{operations/homeomorphism/smoothing-invalid}
\end{center}

\begin{definition}[Expansion]\label{def:expansion}
  For a labelled interfaced linear hypergraph $F = (E_F, E_F[\id], \vs_F, \vt_F, \vconnsr_F, \labels_F)$, and $s \in \vs_F, t \in \vt_F$ such that $\vconnsr_F(t) = s$, we can perform an expansion on $(t, s)$ to yield hypergraph \[G = (E_F, E_F[{\id}] + \{e_{\id}\}, \vs_F + \vs[e_{\id}], \vt_F + \vt[e_{\id}], \vconnsr_H, \labels_F)\] with $e_{\id} \text{ fresh in } \atoms$, $\vs[e_{\id}] = \{s'\}$, $\vt[e_{\id}] = \{t'\}$, $s',t' \text{ fresh in } \atoms$ and \[\vconnsr_H(v) = \begin{cases}
    s' & \text{if}\ v = t \\
    s  & \text{if}\ v = t' \\
    \vconnsr_F(v) & \text{otherwise}
  \end{cases}\]
\end{definition}

\begin{proposition}[Well-formedness of expansion]\label{prop:well-formed-expansion}
  For any labelled interfaced linear hypergraph $F$ containing target vertex $v$ and source vertex $s$, where $\vconnsr(t) = s$, the result of performing an expansion on $(t,s)$ is a well formed labelled interfaced linear hypergraph.
\end{proposition}
\begin{proof}
  All introduced vertices are fresh in $\atoms$, so the sources and targets are disjoint.
  One target vertex $v$ connects to the fresh source vertex, and the fresh target vertex connects to its original connection $\vconnsr(v)$, so $\vconnsr$ is bijective.
  The regular edges are unaffected, so the labelling condition is satisfied.
\end{proof}

\begin{definition}[Smoothing]\label{def:smoothing}
  For a labelled interfaced linear hypergraph $F = (E_F, E_F[\id] + \{e_{\id}\}, \vs_F, \vt_F, \vconnsr_F, \labels_F)$, where $\vs[e_{\id}] = \{s\}$ and $\vt[e_{\id}] = \{t\}$, $\vconnsr_F(t) \neq s$, we can perform a smoothing on $e_{\id}$ to yield hypergraph 
  \[G = (E_F, E_F[{\id}], \vs_F - \vs[e_{\id}][F], \vt_F - \vt[e_{\id}][F], \vconnsr_H, \labels_F)\]
  \[\vconnsr_H(v) = \begin{cases}
    \vconnsr_F(t) & \text{if}\ \vconnsr(v) = t \\
    \vconnsr_F(v) &  \text{otherwise}\
  \end{cases}\] 
\end{definition}

\begin{proposition}[Well-formedness of smoothing]\label{prop:well-formed-smoothing}
  For any labelled interfaced linear hypergraph containing an identity edge $e_{\id}$ with source $s$ and target $t$, $\vconnsr(t) \neq s$, the result of performing a smoothing on $e_{\id}$ is a well formed labelled interfaced linear hypergraph.
\end{proposition}
\begin{proof}
  We only remove vertices, so the sources and targets are disjoint.
  The change in the connections is that the target vertex that original connected to the source of the identity edge now redirects to the source vertice connected to by the target of the identity edge, so $\vconnsr$ is bijective.
  The regular edges are unaffected, so the labelling condition is satisfied.
\end{proof}

\noindent We call an interfaced linear hypergraph \emph{minimal} if no smoothings can be performed.
We quotient hypergraphs by homeomorphism and always draw the minimal version.

\subsection{Trace}\label{sec:trace}

Now equipped with homeomorphism, we can define a suitable trace operation.
To trace a hypergraph with one wire, we create a new identity edge with $\proj{0}(\vs[\interface][F])$ and $\proj{0}(\vt[\interface][F])$ as its source and target respectively.

\begin{center}
  \includesvg[scale=0.6]{operations/trace/full-trace}
\end{center}

\noindent The use of the identity edge ensures that we can represent the trace of the identity as a valid hypergraph.

\begin{center}
  \includesvg[scale=0.6]{operations/trace/full-trace-loop}
\end{center}

\noindent Tracing multiple wires is performed inductively. 
The trace of no wires is equal to the original hypergraph.

\begin{center}
  \includesvg[scale=0.6]{operations/trace/trace-0}
\end{center}

\noindent To trace multiple wires, we simply trace one at a time.

\begin{center}
  \includesvg[scale=0.6]{operations/trace/trace-multiple}
\end{center}

\begin{definition}[Trace]\label{def:trace}
  For a labelled interfaced linear hypergraph \[\morph{F}{x + m}{x + n} = \lilhypere[F]\] we can recursively define its trace of $x$ wires $\morph{\trace{x}{F}}{m}{n}$ as \[\trace{0}{F} = F\]\[\trace{x+1}{F} = \trace{1}{\trace{x}{F}} \text{ for } x > 0\] with the base case defined as follows. \[H = \trace{1}{F} = \lilhypere[H]\] 
\end{definition}

\noindent To perform a trace, we must introduce one identity edge to join the first input and output together, and ensure this does not create a closed loop of wires.
Otherwise, the edges and labels remain the same.
\[E_H = E_F \qquad E_H[\id] = E_F[\id] + \{e_{\id}\} \quad e_{\id} \text{ fresh in } \atoms \qquad \labels_H = \labels_F\]
We take the first input and output vertex, and set them to be the target and source of the identity edge respectively.
\[\vs[\interface][H] = \vs[\interface][F] - \proj{0}(\vs[\interface][F]) \qquad \vs[e_{\id}] = \{\proj{0}(\vs[\interface][F])\} \qquad \vs[e \in E_F][H] = \vs[e][F] \]
\[\vt[\interface][H] = \vt[\interface][F] - \proj{0}(\vt[\interface][F]) \qquad \vt[e_{\id}] = \{\proj{0}(\vt[\interface][F])\} \qquad \vt[e \in E_F][H] = \vt[e][F]\]
Since we have not deleted any vertices, the connections bijection remains the same.
\[\vconnsr_H = \vconnsr_F\]

\noindent After this operation, we can smooth the term as much as possible to remove any redundant identity edges, while still preserving the loops that do not connect to any regular edges.

\begin{center}
  \includesvg[scale=0.6]{operations/trace/full-trace-homeo}
\end{center}

\noindent The whole procedure can be drawn formally as follows:

\begin{center}
  \includesvg[scale=0.6]{operations/trace/trace-formal}
\end{center}

\begin{proposition}[Well-formedness of trace]\label{prop:well-formed-trace}
  For any labelled interfaced linear hypergraphs $\morph{F}{x + m}{x + n}$, $\trace{x}{F}$ is a well-formed labelled interfaced linear hypergraph.
\end{proposition}
\begin{proof}
  We have only moved a source and target from one edge to the identity edge, so the sets are still disjoint.
  The connections bijection is unchanged, so $\vconnsr$ is bijective.
  The regular edges are unaffected, so the labelling condition is satisfied.
\end{proof}

\noindent With trace defined, this means that we have all the operations of a STMC defined in terms of interfaced linear hypergraphs.
Now we must show that hypergraphs equipped with these operations form a \emph{sound and complete} graphical language.

\section{Soundness}\label{sec:soundness}

We propose hypergraphs as a graphical language for STMCs. 
In particular we will focus on traced PROPs~\cite{lack2004composing}, categories with natural numbers as objects and addition as tensor product.
First we consider \emph{soundness}. 

We fix a traced PROP $\termtype$ of morphisms freely generated over a signature $\hypsig$, and assemble labelled interfaced linear hypergraphs into the traced PROP $\lhypterm$, in which the morphisms $m \to n$ are hypergraphs of type $m \to n$, with composition, tensor, symmetry and trace defined as above.

\begin{definition}[Interpretation functor]
  We define the interpretation functor from terms to labelled interfaced linear hypergraphs as the identity-on-objects traced monoidal functor $\morph{\hypfun{-}_\hypsig}{\termtype}{\lhypterm}$.
\end{definition}

\noindent We omit the subscript if unambiguous.
$\hypfun{-}_\Sigma$ is defined recursively over the syntax of the term.
For a generator $\morph{\phi}{m}{n}$, we interpret it as an edge with $m$ sources and $n$ targets.
\begin{center}
  \includesvg[scale=0.6]{generator}
\end{center}
\noindent Formally, for a generator $\morph{\phi}{m}{n}$, this is defined as
\[\hypfun{\phi}_\Sigma = (\{\mf{e} \text{ fresh in } \atoms \}, \vs, \vt, \vconnsr, \labels)\]
\noindent where
\begin{gather*}
\vs[\interface] = \{s_i \text{ fresh in } \atoms \,|\, i < m\} \qquad \vs[\mf{e}] = \{s_{i+m} \text{ fresh in } \atoms \,|\, i < n\} \qquad \labels(\mf{e}) = \phi \\
\vt[\interface] = \{t_i \text{ fresh in } \atoms \,|\, i < m\} \qquad \vt[\mf{e}] = \{s_{i+m} \text{ fresh in } \atoms \,|\, i < n\} \qquad \vconnsr(t_i) = s_i\\
\end{gather*}

\noindent This can be drawn formally as follows.

\begin{center}
  \includesvg[scale=0.6]{generator-formal}
\end{center}

\noindent Identity and symmetry morphisms translate into their hypergraph versions (Definitions~\ref{def:identity-hypergraph} and \ref{def:swap-hypergraph})
\[\hypfun{\id[n]} = \id[n] \qquad \hypfun{\swap{m}{n}} = \swap{m}{n}\]

\noindent To generate hypergraphs of larger terms, we can combine the morphism, identity and swap hypergraphs using composition and monoidal tensor, or by using the trace operator.
\[\hypfun{f \seq g} = \hypfun{f} \seq \hypfun{g} \qquad \hypfun{f} \tensor \hypfun{g} \qquad \hypfun{\trace{x}{f}} = \trace{x}{\hypfun{f}}\]

\begin{proposition}[Well-formedness]
  For any term $f$ in a traced PROP $\termtype$, $\hypfun{f}_\Sigma$ is a well-formed labelled interfaced linear hypergraph.
\end{proposition}
\begin{proof}
  Generator, identity and swap hypergraphs are well-formed, and all other operations involved create well-formed interfaced linear hypergraphs (Propositions \ref{prop:well-formed-composition}, \ref{prop:well-formed-tensor}, \ref{prop:well-formed-trace}).
\end{proof}

\noindent To show soundness, we must examine that the axioms of STMCs are satisfied in the language of labelled interfaced linear hypergraphs. as illustrated in Figure~\ref{fig:hypergraph-stmc}.

\begin{figure}
  \centering
  \includesvg[scale=0.55]{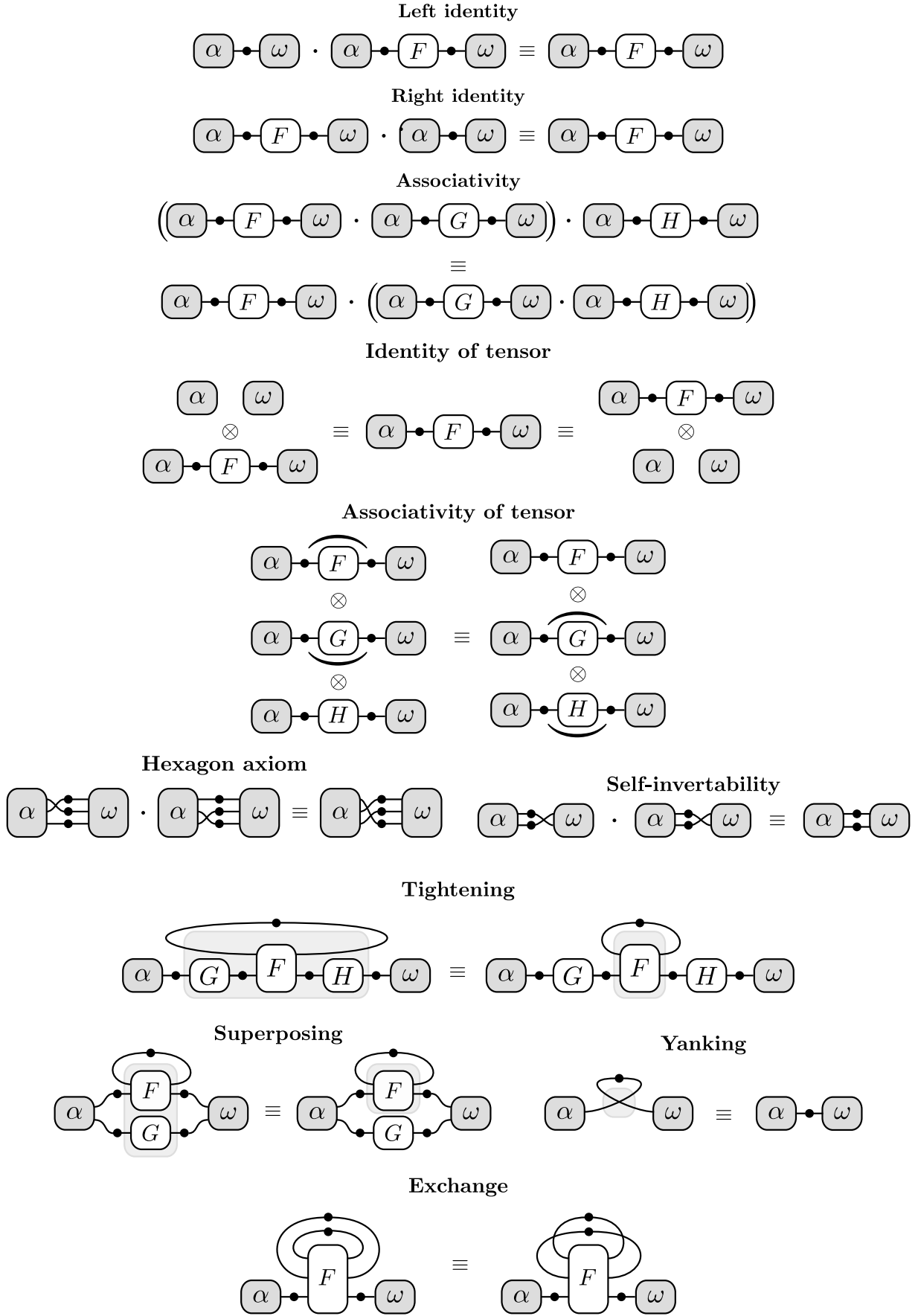}
  \caption{The axioms of STMCs, represented using labelled interfaced linear hypergraphs.}
  \label{fig:hypergraph-stmc}
\end{figure}

\begin{theorem}[Soundness]
  For any morphisms $f,g \in \termtype$, if $f = g$ under the equational theory of the category, then their interpretations as labelled interfaced linear hypergraphs are isomorphic $\hypfun{f} \equiv \hypfun{g}$.
\end{theorem}
\begin{proof}
  Composition produces well-formed interfaced linear hypergraphs (Proposition \ref{prop:well-formed-composition}) and satisfies the axioms of categories with the identity hypergraph $\morph{\id[n]}{n}{n}$ as the unit of composition for $n$. 
  Monoidal tensor produces well-formed interfaced linear hypergraphs (Proposition \ref{prop:well-formed-tensor}), is a bifunctor (Propositions \ref{prop:bifunctoriality-1} and \ref{prop:bifunctoriality-2}) and satisfies the axioms of (strict) monoidal categories with the empty hypergraph $\morph{0}{0}{0}$ as the monoidal unit. 
  The swap hypergraph is natural (Proposition \ref{prop:naturality-swap}) and satisfies the axioms of symmetric monoidal categories. 
  The trace operator produces well-formed interfaced linear hypergraphs hypergraphs (Proposition \ref{prop:well-formed-trace}) and satisfies the axioms of symmetric traced monoidal categories specified in Section \ref{sec:monoidal-categories}.
\end{proof}

\begin{remark}
  One may wonder if the axioms also hold arbitrary STMCs where the objects are not just natural numbers.
  The answer is yes -- the generalisation can be found in Section \ref{sec:generalisation}.
\end{remark}

\section{Completeness}\label{sec:completeness}

We are also able to recover categorical terms in an STMC from labelled interfaced linear hypergraphs. 
This is a two stage process: first we show that any well-formed labelled interfaced linear hypergraph has at least one corresponding categorical term (\emph{definability}); then we show that all of these terms are equal in the category (\emph{coherence}).

\subsection{Definability}

The strategy to retrieve a categorical term from a hypergraph is to exploit the formal graphical representation, in which all edges are `stacked'.
From this representation we can read off a tensor of generators, then connect wires of opposite polarities by linking them with trace and symmetries.
An example is shown in Figure \ref{fig:completeness}.

\begin{definition}[Definability]
  Labelled interfaced linear hypergraphs are definable if for every $F \in \lilhyp$, we can retrieve a well-formed categorical equation for which the hypergraph interpretation of that term is equivalent to the original graph, i.e. for a candidate $\morph{\term}{\lhypterm}{\termtype}$, then $\term[F] \equiv F$.
\end{definition}

\begin{figure}
  \centering
  \fontsize{6}{10}
  \raisebox{2em}{\includesvg[scale=0.6]{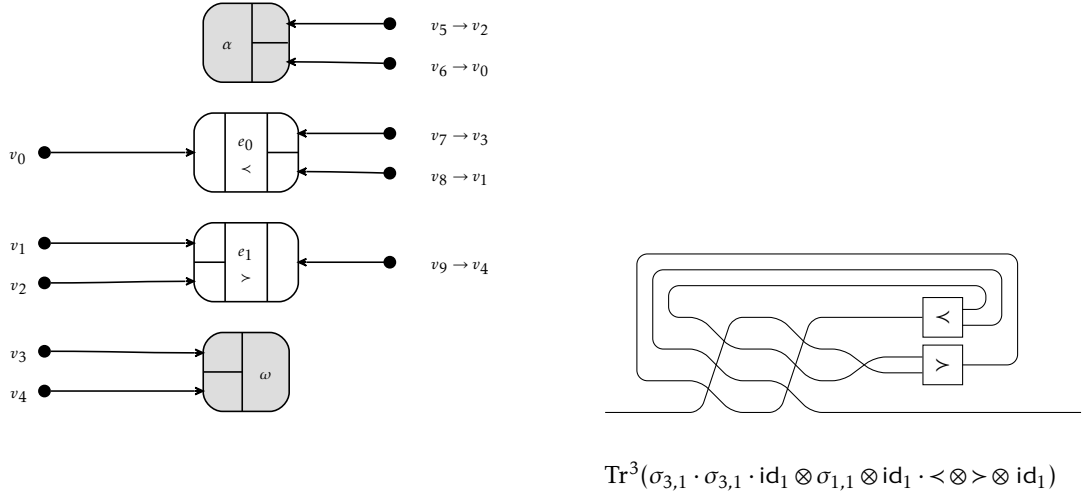}}
  \fontsize{12}{10}
  \qquad
  \raisebox{-0.1\height}{\includestandalone[scale=1]{tikz/example-definability}}
  \caption{A hypergraph and its corresponding categorical term.}
  \label{fig:completeness}
\end{figure}  

\noindent The first step is to fix a total order on the edges $e_1,\cdots, e_n$, including any identity edges.
We fix this order $\leq$ globally.
The $\stack$ operation creates a tensor of the corresponding generators in $\termtype$ for each edge in the hypergraph.
Identity edges are represented by identity morphisms $\id[1]$ in the stack.
\begin{gather*}
  \morph{\stack[-][\leq][\Sigma]}{\lhypterm}{\termtype} \\
  \stack[F][\leq][\Sigma] = \bigotimes_{e \in (E_F + E_F[\id], \leq)} \phi \text{ where }  \phi = \begin{cases} 
    \labels(e) &\text{if}\ e \in E_F \\
    \id[1] & \text{if}\ e \in E_F[\id]
  \end{cases}
\end{gather*}

\noindent Most of the outputs from our stack of generators will need to connect to the inputs of other generators in the stack, so we must trace them around.
Then the only remaining step is to then connect the traced wires to the corresponding inputs in the stack.
Here it will be useful to consider the all the target and source vertices as two totally ordered sets, respecting our new edge order $\leq$.
We write $\vs[\leq]$, $\vt[\leq]$ for the ordered set of all vertices which respects the original order on the vertices and the new order on the edges.
Since the interface is not contained within the order, we set input vertices to be the lowest elements of $\vt[\leq]$ and the output vertices to be the greatest elements of $\vs[\leq]$.
For example, if we have sets $\vs[e_1] = \{s_1,s_2\}$, $\vs[e_2] = \{s_3,s_4, s_5\}$ and $\vs[\interface] = \{s_6\}$, then if we define $\leq$ as $e_1 < e_2$, then $\vs[\leq] = \{s_1,s_2,s_3,s_4,s_5,s_6\}$. 
To simplify notation, we also introduce the notion of a \emph{connections permutation}.

\begin{definition}[Connections permutation]
    For an interfaced linear hypergraph $H$ equipped with edge order $\leq$, where $|\vs[\leq]| = x$, we call its connections permutation $\morph{p}{[x]}{[x]}$ the permutation such that every $i < x$, $\vconnsr_H(\proj{i}(\vt[\leq])) = \proj{p(i)}(\vs[\leq])$. 
\end{definition}

\begin{lemma}[Discrete composition]\label{lem:edgeless-composition}
    For any two discrete interfaced linear hypergraphs $\morph{F}{m}{n}$ and $\morph{G}{n}{k}$, with connections permutations $p$ and $q$ respectively, then the connections permutation of $F \seq G$ is $q \circ p$.
\end{lemma}
\begin{proof}
  By definition of connections permutations, for each target $t = \proj{i}(\vt[\interface][F])$, $\vconnsr_F(t) = \proj{p(i)}(\vs[\interface][F])$, and for each target $t = \proj{i}(\vt[\interface][G])$, $\vconnsr_G(t) = \proj{q(i)}(\vs[\interface][G])$.
  Since composition deletes the input vertices of $G$, we are only concerned with the input vertices of $F$.
  By the first connections permutation, in $F$ the $i$th input vertex originally connected to the $p(i)$th output vertex, so by definition of composition, in $F \seq G$ it will be connected to $\vconnsr_G(\proj{p(i)}(\vt[\interface][G]))$.
  By the second connections permutation this is equal to $\proj{q(p(i)}(\vs[\interface][G])$.
  Therefore the connections permutation of $F \seq G$ is $q \circ p$.
\end{proof}

\noindent We use this permutation to define a `shuffle' construct comprised of symmetries and identities, defined recursively over the set $\vs[\leq]$.
The target that connects to the lowest source is determined, and a symmetry pulling this wire up to the `top' is then defined: this wire is now in the correct position and is of no further concern to us. 
We recursively perform $\shuffle$ on the remaining source and target vertices until none remain, as demonstrated in Figure~\ref{fig:shuffle}.
Before proceeding, we show that the `input-output' connectivity of the shuffle construct reflects the connectivity of the original hypergraph.

\begin{figure}
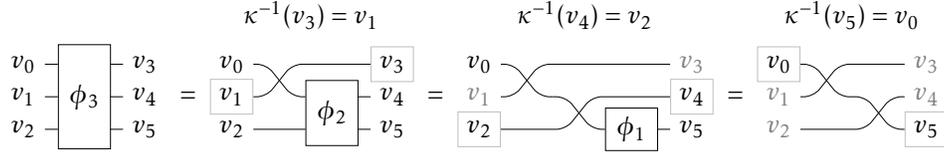

  \centering
  \includestandalone[scale=0.9]{tikz/shuffle-example}
  \caption{Performing the $\shuffle$ algorithm, where $\phi_n$ denotes the shuffle construct for $n$ vertices.}
  \label{fig:shuffle}    
\end{figure}

\begin{algorithm}[tb]
  \SetKwProg{Fn}{Function}{}{end}
  \SetKwData{Hyper}{$\lilhyper$}
  \SetKwData{Targets}{$T$}
  \SetKwData{Sources}{$S$}
  \SetKwData{Term}{$\text{f}$}
  \SetKwData{s}{$\mathsf{s}$} 
  \SetKwData{t}{$\mathsf{t}$} 
  \SetKwData{i}{$\mathsf{i}$} 
  \SetKwData{j}{$\mathsf{j}$} 
  \SetKwData{k}{$\mathsf{k}$}
  \SetKwFunction{Conns}{$\vconnsr$}
  \SetKwData{VT}{$\vt$}
  \SetKwData{VS}{$\vs$}
  \SetKwData{inputs}{$\mathsf{x}$}
  \SetKwData{vertices}{$\mathsf{v}$}
  \SetKwFunction{Shuffle}{$\shuffle_{\Sigma,\leq}$}
  \SetKwFunction{Shuffley}{$\shuffle'_{\Sigma,\leq}$}
  \SetAlgoLined
  \Fn{\Shuffley{\Sources,\Targets,\Conns}}{
      \If{$|\Sources| = 0$}{
          \KwRet $\id[0]$\;
      }
      $\s \leftarrow \proj{0}(\Sources)$; \, $\t \leftarrow \vconnsr^{-1}(\s)$\;
      $\i \leftarrow i \text{ where } \proj{i}(\Targets) = \t$; \, $\j \leftarrow |\Targets| - \i - 1$\;
      \Term $\leftarrow$ $\swap{\i}{1} \tensor \j$\; 
      \KwRet $\Term \seq \id[1] \tensor \Shuffle{\Sources - \s, \, \Targets - \t,\,\Conns}$\;
  }
  \Fn{\Shuffle{$F = \lilhyper$}}{
    \Shuffley{$\vs[\leq], \vt[\leq], \vconnsr$}\;
  }
  \label{alg:shuffle}
  \caption{Defining the shuffle construct.}
\end{algorithm}

\begin{lemma}[Correctness of shuffle]\label{lem:correctness-shuffle}
  For any interfaced linear hypergraph $F$ with connections permutation $p$ and some $\morph{\bar{v}}{0}{n} \in \termtype = v_0 \tensor v_1 \tensor \cdots \tensor v_{n-1}$, $\bar{v} \seq \shuffle[F][\leq][\Sigma] = v_{p^{-1}(0)} \tensor v_{p^{-1}(1)} \tensor \cdots \tensor v_{p^{-1}(n-1)}$.
\end{lemma}
\begin{proof}
  This is by induction on $n$. For $n < 2$ the statement is trivially correct. 
  For $n = 2$, there are two cases: $\{0 \mapsto 0, 1 \mapsto 1\}$ and $\{0 \mapsto 1, 1 \mapsto 0\}$. 
  By naturality of symmetry, $\bar{v} \seq \sigma_2$ is equal to $v_0 \tensor (v_1 \seq \phi_1) = v_0 \tensor v_1$ in the former and $v_1 \tensor (v_0 \seq \sigma_1) = v_1 \tensor v_0$, so for both cases the statement holds. 
  For $n > 2$, $\bar{v} \seq \phi_{n} = v_{x} \tensor (v_{0} \tensor \cdots. \tensor v_{x-1} \tensor v_{x+1} \tensor \cdots \tensor v_{n-1} \seq \sigma_{n-1})$ where $x = p^{-1}(0)$ by naturality of symmetry.
  Therefore the first element of the tensor is correct, and the remaining elements follow by inductive hypothesis.
\end{proof}

\noindent Since there is no input `box', we also need to precompose the shuffle construct with another symmetry to pull the input wires to the `top' of the term.
To retrieve a term from a hypergraph $H$ with edge order $\leq$, we simply trace the composition of the corresponding shuffle construct and edge stack.

\begin{definition}[Definability functor]\label{lem:definabilty-functor}
  We define the definability functor as the identity-on-objects traced monoidal functor $\morph{\term[-][\leq][\Sigma]}{\lhypterm}{\textbf{Term}_{\hypsig}}$ with its action defined for a given edge order $\leq$ and interfaced linear hypergraph $\morph{F}{m}{n}$ with $|\vs[\leq]|$ as
  \begin{center}
      $\term[F][\leq][\Sigma] = \trace{x-m}{\swap{x-m}{m} \seq \shuffle[F][\leq][\Sigma] \seq \stack[F][\leq][\Sigma] \tensor \id[n]}$
  \end{center}
\end{definition}

\noindent To conclude definability we must be able to return to the original hypergraph. 
The shuffle construct is our main obstacle to showing this, so we tackle it separately.

\begin{lemma}[Definability of shuffle]\label{lem:definability-shuffle}
  For any shuffle construct $\morph{\phi_n}{n}{n}$, interpretation $F = \hypfun{\phi_n}$, and permutation $\morph{p}{\set{n}}{\set{n}}$ such that for some $\morph{\bar{v}}{0}{n} = v_0 \tensor v_1 \tensor \cdots \tensor v_{n-1}$, $\bar{v} \seq \phi = v_{p(0)} \tensor v_{p(1)} \tensor \cdots \tensor v_{p(n-1)}$, then $\vconnsr(\proj{i}(\vt[\interface][F])) = \proj{p(i)}(\vs[\interface][F])$.
\end{lemma}
\begin{proof}
  We first use staging and composite symmetry to arrange the shuffle construct into `slices' containing exactly one symmetry $\swap{1}{1}$. We then perform induction of $n$.
  For $n < 2$ the statement holds trivially.
  For $n = 2$ we examine the two cases $\{0 \mapsto 0, 1 \mapsto 1\}$ and $\{0 \mapsto 1, 1 \mapsto 0\}$ as in correctness of shuffle.
  They correspond to the interfaced linear hypergraphs $\id[2]$ and $\swap{1}{1}$ respectively.
  For both cases the statement holds.
  For $n > 2$, we split the definition of $\sigma_n$ into two parts: $\sigma_n' = \swap{x}{1} \tensor \id[n - 1 - x])$ where $x = p^{-1}(0)$, and $\sigma_n'' = \id[1] \tensor \sigma_{n-1}$. 
  In $\hypfun{\sigma'_n}$, $\vconnsr(\proj{x}(\vt[\interface])) = \proj{0}(\vs[\interface])$ by definition of the swap hypergraph. 
  In $\hypfun{\sigma''_n}$, $\vconnsr(\proj{0}(\vt[\interface])) = \proj{0}(\vs[\interface])$ by definition of identity and monoidal tensor. 
  Therefore in $\hypfun{\sigma_n}$, $\vconnsr(\proj{i}(\vt)) = \proj{0}(\vs)$ by discrete composition. 
  So for the first vertex in $\vt$ the statement holds. 
  For the remaining vertices we apply the inductive hypothesis to $\phi_{n-1}$ and add one to the indices of each vertex, since we tensor the construct with an identity wire.
\end{proof}

\noindent Finally we can take on the entire term.

\begin{proposition}[Definability]\label{prop:definability}
  For any interfaced linear hypergraph $\morph{F}{m}{n}$ equipped with edge order $\leq$, where $G = \hypfun{\term[F][\leq][\hypsig]}$, then $F \equiv G$.
\end{proposition}
\begin{proof}
  We map the sources and targets of edges in $F$ to the corresponding vertices in $\hypfun{\term[F][\leq][\hypsig]}$.
  We do the same for the edges. 
  If there is an $\id[1]$ in the stack of generators, we perform an expansion to introduce an identity edge.
  The labelling condition is immediate, so we only need to consider the connections condition. 
  Using the edge order $\leq$, we consider the connections permutation of $F$ and $G$, which we name $p$ and $q$ respectively.
  If $F \equiv G$, then the two permutations must be the same.
  
  We examine the permutation $q$.
  There are two classes of vertices to consider: those that are inputs and those that are not.
  In the first case, the $i$th input (corresponding to $\proj{i}(\vt[\leq])$) will connect to the $p(i)$th source vertex by definability of shuffle, so $q = p$.
  In the second case, the $i$th target (which corresponds to $\proj{i + m}(\vt[\leq])$), will connect to the $p(i + m)$th source vertex by definition of trace, discrete composition and definability of shuffle.
  So $q = p$ in this case also.
  Therefore the connection permutations are equal, so $F \equiv G$.
\end{proof}

\subsection{Coherence}

We cannot immediately conclude completeness. 
There are multiple orders we can choose when stacking the edges, resulting in multiple different shuffle constructs and recovered terms. 
For \emph{coherence} these recovered terms must all be equal by the equations of the STMC.

\begin{definition}[Coherence]\label{def:coherence}
  Interfaced linear hypergraphs are coherent if, for any well-formed interfaced linear hypergraph $F$ and any two orders on its edges $\leq_1$, $\leq_2$, $\term[F][\leq_1][\hypsig] = \term[F][\leq_2][\hypsig]$ by the equations of the STMC.
\end{definition}

\noindent Fortunately, we only need to consider switching two consecutive edges while retaining the others, e.g. $x < f < g < y$ becomes $x < g < f < y$. We can then swap any two edges in the set by propagating these swaps throughout the entire set. To enable us to do this, we can combine a tensor of edge boxes into one single box:

\begin{lemma}[Combination]\label{lem:combination}
  For any $t = \bigtensor_{i = 0}^{m} a_i \tensor \bigtensor_{i=0}^{n} b_i \tensor \bigtensor_{i = 0}^{p}{c_i}$, we can rewrite it as $a~\tensor~\bigtensor_{i=0}^{n}{b_i}~\tensor~c$ where $\morph{a}{\Sigma_{i=0}^{m}\,\dom{a_i}}{\Sigma_{i=0}^{m}\,\cod{a_i}}$ and $\morph{c}{\hypsig_{i=0}^{p}\,\dom{c_i}}{\hypsig_{i=0}^{p}\,\cod{c_i}}$.
\end{lemma}

\noindent We need a lemma to show that we can transform the shuffle construct for a given order into one for a different order by adding appropriate symmetries on either side, reflecting that the edge boxes have now swapped over.

\begin{lemma}[Coherence of shuffle]\label{lem:coherence-shuffle}
  For any interfaced linear hypergraph $\morph{F}{m}{n}$, two orders $\leq_1,\leq_2$ on its edges in which only two consecutive elements $\morph{e_1}{n'}{n}$ and $\morph{e_2}{p'}{p}$ have been swapped, and shuffle constructs \[\morph{\shuffle[F][\leq_1][\Sigma]}{m + p + q + r + s}{p' + q' + r' + s' + n}\] \[\morph{\shuffle[F][\leq_2][\Sigma]}{m + p + r + q + s}{p' + r' + q' + s' + n}\]
  \noindent then
  \[\shuffle[F][\leq_1][\hypsig] = \id_m \tensor \id_p \tensor \swap{q}{r} \tensor \id_s \seq \shuffle[F][\leq_2][\hypsig] \seq \id_{p'} \tensor \swap{r'}{q'} \tensor \id_{s'} \tensor \id[n]\]

  \begin{center}
      \includestandalone{tikz/coherence-shuffle-0}
  \end{center}

\end{lemma}
\begin{proof}

  \noindent We show equality by asserting for every `input' to the term, it always leads to the same `output'. 
  In the diagram below, each box represents a `bundle' of wires. 
  By correctness of shuffle (Lemma \ref{lem:correctness-shuffle}), we know that the each shuffle construct respects the connections permutation of the $H$ with order $\leq_1$ and $\leq_2$ respectively. 
  Effectively, this means that it will shuffle the vertices into new bundles for each generator in the stack, only differing by the swapped edges (labelled $r'$ and $q'$ on the diagram below).
  We can therefore use naturality of symmetry to to show both expressions are equal.

  \begin{center}

      \textbf{LHS}

      \vspace{1em}

      \includestandalone{tikz/coherence-shuffle-1}

      \vspace{1em}

      \textbf{RHS}
      
      \vspace{1em}

      \includestandalone{tikz/coherence-shuffle-2}

      \vspace{1em}

      \includestandalone{tikz/coherence-shuffle-3}
      
  \end{center}

\end{proof}

\begin{lemma}[Coherence for two edges]\label{lem:completeness-two}
  For any interfaced linear hypergraph $\morph{F}{m}{n}$ and two orders $\leq_1,\leq_2$ on its edges which differ only by the swapping of two consecutive elements, $\term[H][\leq_1][\Sigma] = \term[H][\leq_2][\Sigma]$.
\end{lemma}
\begin{proof}

  \noindent Any term generated by $\term[F][\leq][\hypsig]$ can be rewritten in the form \[\trace{|\vt|-m}{\swap{|\vt|-m}{m} \seq \shuffle[F][\leq][\hypsig] \seq x \tensor f \tensor g \tensor y \tensor \id[n]}\] by using combination (Lemma \ref{lem:combination}). 
  So we have a term of the form:
  \begin{center}
      \includestandalone{tikz/coherence-1}
  \end{center}
  \noindent By exchange:
  \begin{center}
      \includestandalone{tikz/coherence-2}
  \end{center}
  \noindent By naturality of symmetry and functoriality:
  \begin{center}
      \includestandalone{tikz/coherence-3}
  \end{center}
  \noindent By coherence of shuffle (Lemma \ref{lem:coherence-shuffle}):
  \begin{center}
      \includestandalone{tikz/coherence-4}
  \end{center}
\end{proof}

\noindent We can then extend this lemma to obtain our final coherence result.

\begin{proposition}[Coherence]\label{prop:coherence}
    For all orderings of edges $\leq_x$ on an interfaced linear hypergraph $F$, \[\term[F][\leq_1][\hypsig] = \term[F][\leq_2][\Sigma] = \cdots = \term[F][\leq_x][\Sigma]\]
\end{proposition}
\begin{proof}
    By repeatedly applying Lemma \ref{lem:completeness-two} until the desired order is obtained.
\end{proof}

\noindent Since the edge order chosen is irrelevant, we are justified in dropping all subscripts from $\term$ and not referring to `the chosen' order. 
We can now conclude completeness.

\begin{theorem}[Completeness I]
  For any interfaced linear hypergraph $F \in \lilhyp$ there exists a unique morphism $f \in \termtype$, up to the equations of the STMC, such that $\hypfun{f} = H$. 
\end{theorem}
\begin{proof}
  By definability (Proposition~\ref{prop:definability}) and coherence (Proposition~\ref{prop:coherence}).
\end{proof}

\noindent We can also operate in the opposite direction and return to the original term after translating it into a hypergraph.
We first state a property of the definability functor.

\begin{lemma}[Compositionality of definability]\label{lem:comp-def}
    For any $F,G \in \lilhyp$ and $x \in \nat$, \[
      \term[F\seq G] = \term[F] \seq \term[G] \qquad 
      \term[F \tensor G] = \term[F] \tensor \term[G] \qquad
      \term[\trace{x}{F}] = \trace{x}{\term[F]}
    \]   
\end{lemma}
\begin{proof}
    By sliding and yanking.
\end{proof}

\begin{theorem}[Completeness II]\label{thm:termhypterm}
    For any morphism $f \in \termtype$, $\term[\hypfun{f}] = f$.
\end{theorem}
\begin{proof}
    As $f$ is freely generated, $f = \trace{x}{f_0 \seq f_1 \cdots f_{n-1}}$ by staging and global trace, where each $f_i = \id[p] \tensor k \tensor \id[q]$.
    Therefore $\hypfun{f} = \trace{x}{\hypfun{f_0} \seq \hypfun{f_1} \cdots \hypfun{f_{n-1}}}$ by definition of the definability functor, and $\term[\hypfun{f}] = \trace{x}{\term[f_0] \seq \term[f_1] \cdots \term[n-1]}$ by Lemma~\ref{lem:comp-def}.
    By sliding and yanking, each $\term[\hypfun{f_i}] = f_i$, and by composition, $\term[\hypfun{f}] = f $.
\end{proof}

\section{Graph rewriting}\label{sec:graph-rewriting}

For standard STMCs, reasoning diagrammatically using isomorphism of diagrams works well, as the axioms are absorbed into the graphical notation. 
However, we often wish to add extra structure to our categories, with associated axioms.
To solve this problem in the language of terms we use \emph{term rewriting}.

\begin{definition}[Subterm]\label{def:subterm}
    For any morphisms $f,g \in \ntermtype$, we say that $g$ is a subterm of $f$ if there exists $\hat{f_1},\hat{f_2} \in \ntermtype$ and $n,x \in \nat$ such that $f = \trace{x}{\hat{f_1} \seq \id[n] \tensor g \seq \hat{f_2}}$, where $\hat{f_1},\hat{f_2}$ do not contain any traces.
    
    \begin{center}
        \includestandalone{tikz/subterm}
    \end{center}
\end{definition}

\begin{definition}[Term rewriting]\label{def:term-rewriting}
  A rewrite rule in $\ntermtype$ is a pair $\rrule{l}{r}$ where $\morph{l,r}{X}{Y}$ are terms in $\ntermtype$ with the same domain and codomain. 
  We write rules as $\morph{\rrule{l}{r}}{X}{Y}$. 
  A rewriting system $\mce$ is a set of rewrite rules. 
  We can perform a rewrite step in $\mce$ for two terms $g$ and $h$ (written $g \rewrite_{\mce} h$) if there exists rewriting rule $\rrule{l}{r} \in \mce$ such that $l$ is a subterm of $g$, i.e. we can write $f$ in the form $\trace{x}{\hat{f_1} \seq \id \tensor l \seq \hat{f_2}}$ for some trace-free terms $\hat{f_1}$ and $\hat{f_2}$.

  \begin{center}
      \includestandalone[scale=0.8]{tikz/term-rewrite}
  \end{center}
\end{definition}

\noindent As is often the case in the algebraic realm, it can be difficult to identify the occurrence of $l$ in $f$ due to functoriality. 
However, unlike with the axioms of STMCs, these extra axioms are not an intrinsic part of our diagrams. 
We can no longer rely purely on hypergraph isomorphisms. 
Fortunately, we can generalise term rewriting to \emph{graph rewriting}. 
First we must formalise the notion of a subgraph of an interfaced linear hypergraph.

\begin{definition}[Subgraph]\label{def:subgraph}
  For any linear hypergraphs $F,G \in \lilhyp$, we say that $G$ is a subgraph of $F$ if there exists an embedding monomorphism $G \to F$.
\end{definition}

\noindent Intuitively, to transform some subgraph $L$ into a larger graph $G$, we apply a sequence of operations.

\begin{lemma}\label{lem:operations-mono}
  For any minimal $F,G \in \lilhyp$ and $x \in \nat$, there exist monomorphisms $F \to F \seq G$, $F \to G \seq F$, $F \to F \tensor G$, $F \to G \tensor F$ and $F \to \trace{x}{F}$.
\end{lemma}
\begin{proof}
  This is immediate, as we simply embed the operand into the result.
\end{proof}

\begin{remark}
  Note that the use of homeomorphism in the definition of trace guarantees that we can define a monomorphism $F \to \trace{x}{F}$.
  Had we used a naive definition that simply coalesced the input and output vertices, the vertex map would not be injective.
\end{remark}

\begin{lemma}\label{lem:mono-as-ops}
  For any monomorphism $\morph{m}{F}{G} \in \lilhyp$, there exist interfaced linear hypergraphs $F_1,F_2$ and $n,x \in \nat$ such that $m = \trace{x}{-} \circ (F_2~\seq~-) \circ (-~\seq~F_1) \circ (\id[n]~\tensor~-)$.
\end{lemma}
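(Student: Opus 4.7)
The plan is to construct $F_1$, $F_2$, $n$ and $x$ explicitly from the embedding $m$ and then verify that the four-fold composition of operations (each a monomorphism by Lemma \ref{lem:operations-mono}) reconstructs $F$ together with the mono $m$. By Lemma \ref{lem:lhyp-monos} the monomorphism $m$ is an embedding, so I identify $G$ with its image as a subgraph of $F$.

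Let $E_R := E_F \setminus \emap{m}(E_G)$ be the set of remainder edges of $F$, and let $V_R^t := \vt_F \setminus \vmapt{m}(\vt_G)$ and $V_R^s := \vs_F \setminus \vmaps{m}(\vs_G)$ be the remainder target and source vertices. All of this remainder data is to be housed in $F_2$, while $F_1$ is chosen to be an edgeless ``permutation-like'' hypergraph that only reshuffles wires. Concretely, $F_2$ is constructed to have edge set $E_R$ with the original $F$-sources and $F$-targets, domain $\inputs[F] + x$ (the true inputs of $F$ together with $x$ feedback channels), and codomain $n + \inputs[G]$, where $n$ counts precisely the wires that must travel past $G$ untouched (either to be routed onward to $\outputs[F]$ or fed back through the trace). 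The connection bijection of $F_2$ is defined to be the restriction of $\vconnsr_F$ to the relevant vertices. The hypergraph $F_1$, edgeless of type $n + \outputs[G] \to \outputs[F] + x$, then just realises the permutation that sends the $n$ pass-through wires and the $\outputs[G]$ of $G$ into the correct positions in $\outputs[F] + x$. Applying $\trace{x}{-}$ closes the $x$ feedback wires, recovering $F$ up to isomorphism. The values of $n$ and $x$ are forced by the arity equations, namely $n + |\outputs[G]| = |\outputs[F]| + x$ on the right of $G$, together with the dual equation on the left coming from the types of the edges in $E_R$.

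The main obstacle is the bookkeeping of the connection bijection $\vconnsr$: each of the four operations modifies $\vconnsr$ in a specific way (deleting the output--input vertex pairs in composition, identifying an output--input pair in trace), so checking that the final connection bijection of the composite exactly recovers $\vconnsr_F$ requires a careful case analysis per class of vertex (boundary vs.\ $G$-interior vs.\ $R$-interior). A subtle point, already flagged in the proof of Lemma \ref{lem:operations-mono}, is that $\trace{x}{-}$ would naively merge two distinct vertices of $F$ into a single vertex of the composite; this is avoided by inserting a homeomorphic identity edge on each traced wire inside $F_1$ (so that the composite is $\homeo$-equivalent to the one we want), preserving the two endpoints and thus ensuring the canonical embedding of $G$ into the reconstructed hypergraph agrees with the original $m$.
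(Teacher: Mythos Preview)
Your proposal is essentially an explicit construction fleshing out the paper's own proof, which consists of a single sentence: ``We apply the sequence of operations that transforms $G$ into $F$.'' So the approaches coincide; you are simply supplying the details that the paper omits. Your specific choice to house \emph{all} remainder edges in $F_2$ and keep $F_1$ edgeless is one valid realisation of the decomposition (feedback through the $x$ trace wires handles the case where a source of a remainder edge is an output of $G$), though the paper does not commit to this particular split. Two minor bookkeeping points: the paper's trace convention places the $x$ feedback wires at the \emph{top} of the interface, so your stated types for $F_1,F_2$ should read $x + |\inputs[F]|$ and $x + |\outputs[F]|$ rather than $|\inputs[F]| + x$ and $|\outputs[F]| + x$; and note that some sources of edges in $E_R$ are not in $V_R^s$ but are (images of) outputs of $G$, so $F_2$'s source-vertex set is not literally $V_R^s$ but must include fresh copies of those $G$-output vertices, which then get identified with the originals under the composition/trace. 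Neither point is a genuine gap.
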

\begin{proof}
  Since composition, tensor and trace can all be represented as monomorphisms (Lemma~\ref{lem:operations-mono}), we apply the sequence of operations that transforms $F$ into $G$.
\end{proof}

\begin{lemma}\label{lem:subterms}
  For any morphisms $f,g \in \ntermtype$, $g$ is a subterm of $f$ if and only if $\hypfun{g}$ is a subgraph of $\hypfun{f}$.
\end{lemma}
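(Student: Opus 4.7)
The plan is to prove the two directions separately, using the operation-as-monomorphism correspondence of Lemma~\ref{lem:operations-mono} and Lemma~\ref{lem:mono-as-ops} together with the inverse-of-interpretation result (Proposition~\ref{lem:termgraph}) and composite definability (Lemma~\ref{lem:split-term}).

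For the forward direction ($\Rightarrow$), assume $g$ is a subterm of $f$, so by Definition~\ref{def:subterm} we may write $f = \trace{x}{\hat{f_1} \seq \id_n \tensor g \seq \hat{f_2}}$ with $\hat{f_1},\hat{f_2}$ trace-free. Applying the interpretation functor gives, up to isomorphism,
\[\hypfun{f} \equiv \trace{x}{\hypfun{\hat{f_1}} \seq \hypfun{\id_n} \tensor \hypfun{g} \seq \hypfun{\hat{f_2}}}.\]
Now I chain four monomorphisms guaranteed by Lemma~\ref{lem:operations-mono}: first $\hypfun{g} \hookrightarrow \hypfun{\id_n}\tensor\hypfun{g}$ (tensor with a fixed hypergraph on the left), then compose on the left with $\hypfun{\hat{f_1}}$ and on the right with $\hypfun{\hat{f_2}}$ (two more embedding monomorphisms), and finally trace $x$ wires (a monomorphism once we expand with a homeomorphism as in the proof of Lemma~\ref{lem:operations-mono}). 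Monomorphisms compose, so the composite exhibits $\hypfun{g}$ as a subgraph of $\hypfun{f}$ in the sense of Definition~\ref{def:subgraph}.

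For the reverse direction ($\Leftarrow$), assume $\hypfun{g}$ is a subgraph of $\hypfun{f}$, so there is an embedding monomorphism $m:\hypfun{g}\to\hypfun{f}$. By Lemma~\ref{lem:mono-as-ops}, $m$ factors as a sequence of the shape $\trace{x}{-}\circ(F_2\seq -)\circ(-\seq F_1)\circ(\id_n\tensor -)$ for some hypergraphs $F_1,F_2$ and $n,x\in\nat$. Applying this factorisation to $\hypfun{g}$ yields, up to isomorphism,
\[\hypfun{f} \equiv \trace{x}{F_1 \seq (\id_n \tensor \hypfun{g}) \seq F_2}.\]
Set $f_1 := \termfun_{\leq}(F_1)$ and $f_2 := \termfun_{\leq}(F_2)$ for any chosen edge orders. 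By Proposition~\ref{lem:termgraph} together with composite definability (Lemma~\ref{lem:split-term}) applied to the whole right-hand side, we get
\[f = \termfun_{\leq}(\hypfun{f}) = \trace{x}{f_1 \seq (\id_n \tensor \termfun_{\leq}(\hypfun{g})) \seq f_2} = \trace{x}{f_1 \seq (\id_n \tensor g) \seq f_2},\]
working throughout up to the STMC equational theory.

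The main obstacle is the trace-freeness requirement on $\hat{f_1},\hat{f_2}$ in Definition~\ref{def:subterm}: the terms $f_1,f_2$ produced by $\termfun_{\leq}$ may contain traces. The fix is to apply the global trace lemma to each of $f_1$ and $f_2$ separately, pulling all their traces to the outside, and then use the superposing/tightening axioms of STMCs to absorb these outer traces into the single ambient trace, adjusting the trace width $x$ accordingly. After this normalisation the term has exactly the shape required by Definition~\ref{def:subterm}, completing the reverse direction.
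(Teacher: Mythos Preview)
Your argument mirrors the paper's proof almost exactly: both directions use Lemma~\ref{lem:operations-mono} and Lemma~\ref{lem:mono-as-ops} to pass between the subterm decomposition and a monomorphism factorisation, then invoke composite definability and the inverse of $\hypfun{-}$.

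Two remarks. First, a minor slip: from the factorisation $\trace{x}{-}\circ(F_2\seq -)\circ(-\seq F_1)\circ(\id_n\tensor -)$ applied to $\hypfun{g}$ you should obtain $\hypfun{f} \equiv \trace{x}{F_2 \seq (\id_n \tensor \hypfun{g}) \seq F_1}$, i.e.\ with $F_2$ on the left and $F_1$ on the right; you have them swapped (harmless, but worth correcting). Second, your final paragraph is a genuine addition over the paper's own proof. Definition~\ref{def:subterm} does require $\hat{f_1},\hat{f_2}$ to be trace-free, yet the paper stops at $f = \trace{x}{\termfun(F_2) \seq \id_n \tensor g \seq \termfun(F_1)}$ and declares $g$ a subterm without discharging that side-condition, even though $\termfun$ always produces a traced term. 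Your repair---applying the global trace lemma to each of $f_1,f_2$ and then absorbing those traces into the ambient one via tightening and superposing---is the correct way to close this gap.
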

\begin{proof}
  For ($\Rightarrow$) we assume that $g$ is a subterm of $f$. Therefore we know that we can write $f$ in the form of Definition \ref{def:subterm}. By definition of $\hypfun{-}$, $\hypfun{f} = \trace{x}{\hypfun{\hat{f_1}} \seq \hypfun{\id[n]} \tensor \hypfun{g} \seq \hypfun{\hat{f_2}}}$. As we can express all operations as monomorphisms (Lemma \ref{lem:operations-mono}), there exists a monomorphism $\hypfun{g} \to \hypfun{f}$, namely $\trace{x}{-} \circ (\hypfun{\hat{f_1}} \seq - ) \circ ( - \seq \hypfun{\hat{f_2}}) \circ (\hypfun{\id[n]} \tensor -)$.

  For ($\Leftarrow$) we assume that there exists an embedding monomorphism $\morph{m}{\hypfun{g}}{\hypfun{f}}$. Therefore by Lemma \ref{lem:mono-as-ops}, there exists hypergraphs $F_1,F_2$ and $n,x \in \nat$ such that $m = \trace{x}{-} \circ (F_2 \seq -) \circ (- \seq F_1) \circ (\id[n] \tensor -)$. This means that $\hypfun{f} = \trace{x}{F_2 \seq \id[n] \tensor \hypfun{g} \seq F_1}$. We apply the definability functor to both sides:
  \begin{align*}
        \term[\hypfun{f}] = f &= 	\term[\trace{x}{F_2 \seq \id[n] \tensor \hypfun{g} \seq F_1}] & \text{Completeness II} \\
                           &= \trace{x}{\term[F_2] \seq \id[n] \tensor 	\term[\hypfun{g}] \seq 	\term[F_1]} & \text{Compositionality of definability} \\
                           &= \trace{x}{\term[F_2] \seq \id[n] \tensor g \seq \term[F_1]} & \text{Completeness II}
  \end{align*}

  \noindent Therefore there exist $f_1 = \term[F_1]$ and $f_2 = \term[F_2]$ such that $f = \trace{x}{f_1 \seq \id[n] \tensor g \seq f_2}$, so $g$ is a subterm of $f$.
\end{proof}

\subsection{DPO rewriting}

A popular approach to graph rewriting is known as double pushout (DPO) rewriting~\cite{ehrig1973graph}, and we use an extension of the traditional definition that introduces an `interface'~\cite{bonchi2017confluence}.
We start by recalling the definition for ordinary hypergraphs in $\labhyp$.

\begin{definition}[DPO]\label{def:dpo}
    A DPO rewrite rule $\rrule{L}{R} \in \labhyp$ is a span $L \leftarrow K \rightarrow R$, where $K$ is discrete. A DPO system $\mcr$ is a set of DPO rules. We write $G \grewrite_\mcr H$ if there exists span $L \leftarrow K \rightarrow R$ in $\mcr$, discrete hypergraph $J$, and cospan $K \rightarrow C \leftarrow J$, such that the diagram below commutes, and the two squares are pushouts.
\end{definition}

\begin{wrapfigure}[]{r}{0.29\textwidth}
  \centering
  \vspace{1em}

  \includestandalone[scale=1.3]{tikz/dpo/dpo}
\end{wrapfigure}

\noindent To perform rewrite rule $\rrule{L}{R}$ in some larger graph $G$, a \emph{matching} monomorphism $L \rightarrow G$ must first be identified. 
Then its \emph{pushout complement} $K \rightarrow C \rightarrow G$ can be computed: $C$ is effectively `$G$ with $L$ removed'. 
Finally we compute the pushout $C \rightarrow H \leftarrow R$, yielding the graph $H$: the graph $G$ with $L$ replaced by $R$.

To perform rewrite rule $\rrule{L}{R}$ in some larger graph $G$, a \emph{matching} $\morph{p}{L}{G}$ must first be identified. 
Its \emph{pushout complement} $K \rightarrow C \rightarrow G$ and the pushout $C \rightarrow H \leftarrow R$ can then be computed, yielding us the graph $H$: the graph $G$ with $L$ replaced by $R$.
An example of the procedure can be seen in Figure~\ref{fig:dpo-hyp}.

\begin{figure}
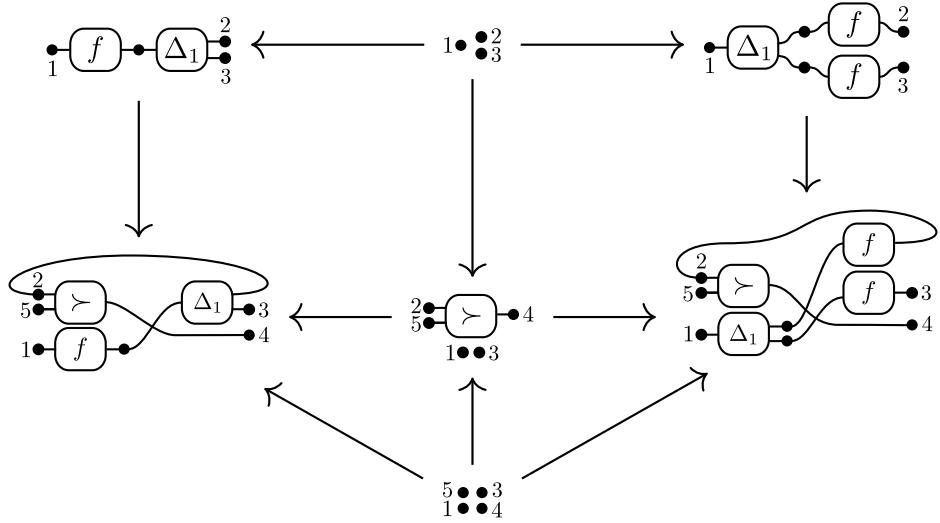

  \centering
  \includestandalone[scale=1.5]{tikz/dpo/dpo-example-3}
  \caption{An example of DPO rewriting in $\labhyp$, using the axiom of naturality in a Cartesian category.}
  \label{fig:dpo-hyp}
\end{figure}

\subsection{Adhesive categories}

Not all structures are compatible with DPO rewriting. 
For example, the pushout complements may not be unique. 
This is essential, as it implies that for a given matching monomorphism, there is a unique rewrite of the graph.
A commonly used framework that ensures the DPO procedure is always well-defined is that of \emph{adhesive categories}, introduced by Lack and Sobociński~\cite{lack2004adhesive}. 
The key property of these categories is that pushout complements are always unique for rewrite rules where each leg of the span is a monomorphism, if such a complement exists.
Additionally, they also enjoy a local Church-Rosser theorem and a concurrency theorem.
We have already met an adhesive category:

\begin{proposition}\label{prop:shyp-adhesive}
  $\labhyp$ is adhesive.
\end{proposition}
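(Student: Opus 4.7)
The plan is to present $\nshyptype$ as a slice of a presheaf category and invoke two standard closure properties of adhesive categories established by Lack and Sobociński~\cite{lack2004adhesive}: (i) every presheaf category $[\mathbf{C},\mathbf{Set}]$ over a small category $\mathbf{C}$ is adhesive, and (ii) every slice $\mathcal{A}/X$ of an adhesive category $\mathcal{A}$ is again adhesive. Composing these two yields the result immediately.

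First I would unpack the definition of $\nshyptype$ given earlier. The signature $\ehypsig$ is itself an object of $\shyp$ (a single vertex with one edge per label, appearing in sources and targets with the appropriate arity), so by construction $\nshyptype = \shyp/\ehypsig$ is literally a slice of $\shyp = [\mathbf{X},\mathbf{Set}]$. Since $\mathbf{X}$ is small, closure property (i) applies: pullbacks, pushouts, and the van Kampen condition in a functor category into $\mathbf{Set}$ are all computed pointwise, so adhesivity of $\mathbf{Set}$ lifts to adhesivity of $\shyp$.

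Applying closure property (ii) to $\shyp/\ehypsig$ then gives that $\nshyptype$ is adhesive. Since the paper's style is self-contained, I would also sketch the pointwise argument briefly: given a diagram $L \leftarrow K \rightarrow R$ with monomorphism $K \hookrightarrow L$, the van Kampen square built in $\shyp$ has as its vertex component and edge component the corresponding squares in $\mathbf{Set}$, each of which is van Kampen by adhesivity of $\mathbf{Set}$, and the compatibility with the labelling map is automatic from the slice structure.

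The main obstacle is genuinely the justification of (i) and (ii), but neither is novel to this paper; both are proved in \cite{lack2004adhesive}. What makes the argument clean here is precisely that simple hypergraphs impose no cardinality or bijectivity constraints on the connectivity data, so they fit straightforwardly into the presheaf framework. This is exactly the feature that the linear variant $\nlhyptype$ lacks — its connections bijection $\vconnsr$ obstructs the existence of certain pushouts — which is why the authors must settle for \emph{partial} adhesivity of $\nlhyptype$ by embedding it into the adhesive $\nshyptype$.
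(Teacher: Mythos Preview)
Your proposal is correct and takes essentially the same approach as the paper: the paper's proof is a one-line invocation that $\nshyptype$ is a slice of a presheaf category and hence adhesive by~\cite{lack2004adhesive}. You have simply unpacked this argument in more detail, spelling out the two closure properties and the pointwise computation, but the underlying route is identical.
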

\begin{proof}
  $\labhyp$ is a slice category of a presheaf category, so is adhesive~\cite{lack2004adhesive}.
\end{proof}

\noindent We need to build on this to reach our \emph{interfaced linear} hypergraphs.
Unfortunately our category of interfaced linear hypergraphs $\lilhyp$ is not adhesive: pushout complements are not unique.
This is because we do not require that the interface orders are preserved by homomorphism, as illustrated below.

\begin{center}
  \includestandalone[scale=0.5]{tikz/dpo/poc-not-unique-1}
  \quad
  \includestandalone[scale=0.5]{tikz/dpo/poc-not-unique-2}
\end{center}

\noindent The left case would be the `natural' choice: we are only concerned with the section of graph being rewritten, so everything else should be left untouched.
A possible solution would be to enforce that interfaces \emph{are} preserved by homomorphism.
This raises its own problems, as usually this interface flexibility is advantageous: for example, it allows us to model the operations of our hypergraphs (Lemma~\ref{lem:operations-mono}).
A simpler option is to translate simply translate our interfaced linear hypergraphs into $\labhyp$ and perform rewriting there.
Since we are using DPO \emph{with interfaces}, we know that the interfaces are preserved throughout the rewriting procedure, and thus we can recover it at the end.

The first thing we must do is to remove the interfaces of our interfaced linear hypergraphs, so that we have a regular (uninterfaced) linear hypergraph.
We call this procedure \emph{trimming}.

\begin{center}
  \includesvg[scale=0.6]{dpo/trim-before} \quad \raisebox{1.5em}{$\xRightarrow{\htrim{-}}$} \quad \includesvg[scale=0.6]{dpo/trim-after}
\end{center}

\begin{definition}[Trimming]\label{def:trim}
  For any interfaced linear hypergraph $F$, we can trim its interfaces with the functor $\morph{\htrim{-}}{\lilhyp}{\labhyp}$, defined as $\htrim{F} = (V, E_F, \lambda e.\vs[e][F],\lambda e. \vconnsr_F^\mon \circ \vt[e][F])$, where $V = \bigcup_{e \in E} \vs[e][F]$.
\end{definition}

\noindent We also write $\htrim{v}$ for the image of a vertex $v$ and $\htrim{V}$ for the image of a set of vertices $V$ under this functor.

\begin{lemma}\label{lem:linear-trim}
  For any $F \in \lilhyp$, $\htrim{F}$ is linear.
\end{lemma}
\begin{proof}
  Since each $\vs[e][F]$ is disjoint, each vertex is only in the sources of one edge.
  Since each $\vt[e][F]$ is disjoint and $\vconnsr_F$ is bijective, each vertex is only in the targets of one edge.
  Therefore $\htrim{F}$ is linear.
\end{proof}

\noindent Of course, we cannot simply `forget' the interfaces -- we will need them to recover the orders after we have performed rewriting.
We can keep track of the interfaces using the $J$ graph from the DPO diagram in Definition~\ref{def:dpo}.

\begin{center}
  \includesvg[scale=0.6]{dpo/trim-before} \quad \raisebox{1.5em}{$\xRightarrow{\io{-}}$} \quad \includesvg[scale=0.6]{dpo/io}
\end{center}

\begin{definition}[Interface]
  For any $\morph{G}{m}{n} \in \lilhyp$, we write $\io{G} \in \labhyp$ for the discrete hypergraph with $m + n$ vertices.
\end{definition}

\begin{definition}[Interfacing]\label{def:interfaces}
  For any $J,H \in \labhyp$, where $J$ is discrete, we say that $J$ interfaces $H$ if there exists 
  \begin{itemize}[noitemsep]
    \item a partition $V_J = V_\einput + V_\eoutput$ equipped with total orders on the two subsets
    \item a morphism $\morph{m}{J}{H}$ that is injective when restricted to the two subsets
  \end{itemize}
  such that
  \begin{itemize}[noitemsep]
     \item for any $v \in V_\einput$, $\inputs{m(v)} = 0$
     \item for any $v \in V_\eoutput$, $\outputs{m(v)} = 0$
  \end{itemize}
  Furthermore, for any $v \in V_H$ not in the image of $m$, $\inputs{v},\outputs{v} > 0$. 
\end{definition}

\noindent We are now ready to formulate the notion of a rewrite rule in $\lilhyp$.

\begin{definition}[Rewrite rule]\label{def:rewrite-rule}
  For a pair of interfaced linear hypergraphs $\morph{L,R}{m}{n} \in \lilhyp$, we represent their corresponding rewrite rule in $\labhyp$ as $\htrim{L} \leftarrow \io{L} \rightarrow \htrim{R}$, with the monomorphisms $\morph{p}{\io{L}}{L}$ and $\morph{q}{\io{L}}{R}$ defined such that for any vertex $v \in \io{L}$, there exists $x \in \nat$ such that $p(v) = \htrim{\proj{x}(\vt[\interface][L])}$ and $q(v) = \htrim{\proj{x}(\vs[\interface][L])}$.  
  We write this rewrite rule as $\htrim{\rrule{L}{R}}$.
\end{definition}

\begin{center}
  \includestandalone[scale=1.15]{tikz/dpo/dpo-example-ilh-1a}
\end{center}

\noindent For a rewriting system $\mce$, we write $\hypfun{\mce}$ for the interpretation of its rewrite rules as spans of hypergraphs as illustrated in \ref{def:rewrite-rule}.

\begin{lemma}
  For any interfaced linear hypergraph $G$, $\io{G}$ interfaces $\htrim{G}$ with the partitioning $V_\einput = \htrim{\vt[\interface][G]}$, $V_\eoutput = \htrim{\vs[\interface][G]}$.
  For a rewrite span $\htrim{\rrule{L}{R}}$, $\io{L}$ interfaces $\htrim{L}$ and $\htrim{R}$ with the partitioning $V_\einput = \htrim{\vt[\interface][L]}$, $V_\eoutput = \htrim{\vs[\interface][L]}$.
\end{lemma}
\begin{proof}
  For $G$ and $L$ this is immediate.
  For $R$, this follows as the left and right hand side of a rewrite rule must have the same domain and codomain.
\end{proof}

\noindent Once we have translated back into regular hypergraphs, we can perform rewriting as described above.
Once we have acquired a rewritten hypergraph, we must use the interface $J$ to reconstruct an interfaced linear hypergraph.

\begin{proposition}\label{prop:linear}
  For a rewrite span $\htrim{\rrule{L}{R}} \in \htrim{\mce}$ and linear hypergraph $G$, if there exists a matching $L \to G$, and $G \grewrite_{\htrim{\mcr}} H$, then $H$ is linear.
\end{proposition}
\begin{proof}
  Since $\htrim{G}$ is linear, then $C$ must also be linear by homomorphism, and $R$ is linear by Lemma $\ref{lem:linear-trim}$.
  By the DPO diagram, we know that $H$ is the pushout of $C \xleftarrow{p} K \xrightarrow{q} R$, where $K = V_\einput + V_\eoutput$.
  For any $v \in V_\einput$, $\inputs{q(v)} = 0$ as $K$ interfaces $R$ and $\outputs{p(v)} = 0$ as $K$ interfaces $L$ and $C$ is the complement $G - L$.
  Therefore in the pushout $R \xrightarrow{r} H \xleftarrow{s}$ for any $v \in V_\einput$, $r(p(v)) = s(q(v))$ will be the source of an edge from $R$ and the target of an edge from $C$, and only one each since $R$ and $C$ are linear.
  A similar argument follows for each $v \in V_\eoutput$.
  Therefore $H$ is linear.
\end{proof}

\begin{lemma}\label{lem:interface-g-h}
  For a DPO diagram as illustrated in Definition~\ref{def:dpo} and rewrite rule $\htrim{\rrule{L}{R}}$, if $J$ interfaces $G$ then it also interfaces $H$.
\end{lemma}
\begin{proof}
  As $J$ interfaces $G$ and there exists a morphism $C \to G$, $J$ must also interface $C$ by homomorphism condition: a morphism must preserve sources and targets so if a vertex is not a source or target in $G$, it cannot be a source or target in $C$.
  The reasoning is then the same as Proposition~\ref{prop:linear}: the pushout hypergraph $H$ only `fills the hole', so it will not affect the interfaces.
  Therefore $J$ interfaces $H$.
\end{proof}

\noindent To retrieve an interfaced linear hypergraph from a regular one with interfacing morphism, we use the \emph{reinterface} functor $\morph{\hinter{-}{=}}{\labhyp \times \labhyp}{\lilhyp}$.

\begin{center}
  \raisebox{0em}{\includesvg[scale=0.6]{dpo/io}}
  \quad\raisebox{1.5em}{$\rightarrow$}\quad
  \includesvg[scale=0.6]{dpo/inter-before}\quad
  \quad\raisebox{1.3em}{$\xRightarrow{\hinter{-}{=}}$} \quad \includesvg[scale=0.6]{dpo/inter-after}
\end{center}

\begin{definition}[Reinterfacing]
  For any $H,J \in \labhyp$ and morphism $\morph{p}{J}{H}$ such that $J$ interfaces $H$ with partition $V_\einput + V_\eoutput$, we can reinterface $H$ with respect to $J$ with the functor $\morph{\hinter{H}{J}}{\labhyp}{\lilhyp}$, defined as $\hinter{H}{J} = (E_H, S', T', \lambda s_v. t_v, \labels_H\}$ where 
  \begin{gather*}
      \vs[e \in E_H]' = \{s_{v} \text{ fresh in } \atoms \,|\, v \in \esources_H(e)\} \qquad 
      \vs[\interface]' = \{s_v \text{ fresh in } \atoms \,|\, v \in p^\mon(V_\eoutput)\} \\
      \vt[e \in E_H]' = \{t_{v} \text{ fresh in } \atoms \,|\, v \in \etargets_H(e)\} \qquad
      \vt[\interface]' = \{t_v \text{ fresh in } \atoms \,|\, v \in p^\mon(V_\einput)\}
  \end{gather*}
\end{definition}
\begin{definition}[Rewriting]\label{def:rewriting}
  For two hypergraphs $L,R \in \lilhyp$ and rewrite rule $\rrule{L}{R} \in \mcr$, we define the corresponding rewrite rule $\htrim{\rrule{L}{R}} \in \labhyp$ as in Definition~\ref{def:rewrite-rule}.
  For a morphism $L \to G \in \lilhyp$ such that $\htrim{L} \to \htrim{G}$ is a matching, we perform the rewrite $\htrim{G} \grewrite_{\htrim{\mcr}} H \in \labhyp$.
  The resulting hypergraph in $\lilhyp$ is $\hinter{H}{\io{G}}$.
\end{definition}

\noindent We call a monomorphism $L \to G \in \lilhyp$ an $\interface$-matching if the corresponding morphism $\htrim{L} \to \htrim{G}$ is a matching, i.e. $\io{L} \rightarrow \htrim{L} \to \htrim{G}$ has a pushout complement.

\begin{theorem}
  For a rewrite rule $\rrule{L}{R} \in \lilhyp$ and $\interface$-matching $L \to G$, the procedure in Definition \ref{def:rewriting} yields a unique interfaced linear hypergraph.
\end{theorem}
\begin{proof}
  The DPO procedure yields us a unique hypergraph that is linear by Proposition~\ref{prop:linear}. As $\io{G}$ interfaces $\htrim{G}$, it also interfaces $H$ by Lemma~\ref{lem:interface-g-h}, so we can obtain a unique interfaced linear hypergraph $\hinter{H}{\io{G}}$.
\end{proof}

\noindent In general, although we know that pushout complements are unique in adhesive categories, we do not actually have a guarantee that they exist for a given monomorphism $L \to G$.
Usually some variety of the \emph{no-dangling-hyperedge} condition is used to identify the monomorphisms that allow for the definition of pushout complements.
However, in our case we actually can guarantee they exist!

\begin{definition}[No-dangling-hyperedge condition]
  Morphisms $K \xrightarrow{p} L \xrightarrow{q} G$ in $\labhyp$ satisfy the no-dangling-hyperedge condition if, for each hyperedge $h$ not in the image of $q$, every source and target of $h$ is either (i) not in the image of $q$ or (ii) in the image of $q \circ p$.
\end{definition}

\begin{lemma}
  For any $L,G \in \labhyp$, the morphisms $\io{L} \xrightarrow{p} \htrim{L} \xrightarrow{q} \htrim{G}$ always satisfy the no-dangling-hyperedge condition.
\end{lemma}
\begin{proof}
  Assume there is an edge $e \in \htrim{G}$ that is not in the image of $q$, with a source $v_G = m(v_L)$.
  Since $\htrim{L}$ is linear (Lemma~\ref{lem:linear-trim}), $\outputs{v_L} = 0$ by preservation of sources.
  By definition of $\io{L}$, any vertex with out-degree $0$ must be in the image of $p$, so $v_G$ is in the image of $q \circ p$.
  The same follows for targets.
\end{proof}

\begin{theorem}[\texorpdfstring{$\interface$-matchings}{Interface-matchings}]\label{thm:matchings}
  All monomorphisms in $\lilhyp$ are $\interface$-matchings.
\end{theorem}
\begin{proof}
  To form a pushout complement $C$ we remove elements of $\htrim{L}$ from $\htrim{G}$ in the obvious way.
  This is a well-formed linear hypergraph since $\io{L} \to \htrim{L} \to \htrim{G}$ always satisfies the no-dangling-hyperedge condition.
\end{proof}

\noindent Now all that remains is to show that our notion of rewriting in the graph language is equivalent to that in the term language.

\begin{theorem}
  For a set of equations $\mce$ in $\ntermtype$, $g \rewrite_{\mce} h$ if and only if $\hypfun{g} \grewrite_{\hypfun{\mce}} \hypfun{h}$.
\end{theorem}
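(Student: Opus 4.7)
The plan is to prove the two directions separately, using the machinery already established: Lemma~\ref{lem:subterms} linking subterms to subgraphs, Lemma~\ref{lem:operations-mono} expressing operations as monomorphisms, composite definability (Lemma~\ref{lem:split-term}), inverse of $\hypfun{-}$ (Proposition~\ref{lem:termgraph}), and the I-matchings theorem that guarantees pushout complements always exist for monomorphisms into linear hypergraphs.

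For the $(\Rightarrow)$ direction, I would unpack $g \rewrite_{\mce} h$: there is a rule $\morph{\rrule{l}{r}}{X}{Y} \in \mce$ and trace-free terms $\hat{f_1},\hat{f_2}$ and $n,x \in \nat$ such that $g = \trace{x}{\hat{f_1} \seq \id_n \tensor l \seq \hat{f_2}}$ and $h = \trace{x}{\hat{f_1} \seq \id_n \tensor r \seq \hat{f_2}}$. Applying $\hypfun{-}$ and using the fact that it is an identity-on-objects traced monoidal functor yields $\hypfun{g} = \trace{x}{\hypfun{\hat{f_1}} \seq \id_n \tensor \hypfun{l} \seq \hypfun{\hat{f_2}}}$ and the analogous equality for $\hypfun{h}$. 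By Lemma~\ref{lem:operations-mono} the composite of these operations gives a monomorphism $\morph{p}{\hypfun{l}}{\hypfun{g}}$, and the corresponding monomorphism $\morph{p'}{\hypfun{r}}{\hypfun{h}}$ built from the same context operations. The translated rule $\hypfun{l} \leftarrow K \rightarrow \hypfun{r} \in \hypfun{\mce}$ together with $p$ gives the left span of a DPO diagram; by the I-matchings theorem the pushout complement $K \to C \to \hypfun{g}$ exists, and because pushout complements are unique in the partial adhesive setting, $C$ must coincide with the hypergraph obtained by applying the context operations to $K$. The pushout of $K \to C$ with $K \to \hypfun{r}$ then recovers exactly $\hypfun{h}$, giving the required DPO step.

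For the $(\Leftarrow)$ direction, I would assume $\hypfun{g} \grewrite_{\hypfun{\mce}} \hypfun{h}$, so there is a span $\hypfun{l} \leftarrow K \rightarrow \hypfun{r} \in \hypfun{\mce}$ (corresponding to some $\rrule{l}{r} \in \mce$), a matching $\morph{m}{\hypfun{l}}{\hypfun{g}}$ and a pushout complement $C$ so that $\hypfun{h}$ arises as the right pushout. By Lemma~\ref{lem:subterms}, the existence of the monomorphism $m$ already tells us $l$ is a subterm of $g$, so there exist trace-free $\hat{f_1},\hat{f_2}$ and $n,x$ with $g = \trace{x}{\hat{f_1} \seq \id_n \tensor l \seq \hat{f_2}}$. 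Applying $\hypfun{-}$ on both sides and using composite definability together with inverse-of-$\hypfun{-}$ on $\hypfun{h}$ (which, by construction, differs from $\hypfun{g}$ only by replacing $\hypfun{l}$ with $\hypfun{r}$ in the same context $C$), I can read off that $h = \trace{x}{\hat{f_1} \seq \id_n \tensor r \seq \hat{f_2}}$, which is exactly a rewrite step $g \rewrite_\mce h$.

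The main obstacle I expect is showing in the forward direction that the pushout complement $C$ really is the ``context'' hypergraph $\trace{x}{\hypfun{\hat{f_1}} \seq \id_n \tensor - \seq \hypfun{\hat{f_2}}}$ applied to $K$, and symmetrically, in the reverse direction, that the DPO-produced $\hypfun{h}$ can genuinely be read back as having the form $\hypfun{\trace{x}{\hat{f_1} \seq \id_n \tensor r \seq \hat{f_2}}}$ with the \emph{same} context $\hat{f_1},\hat{f_2}$. Both boil down to uniqueness of pushout complements in the partial adhesive category $\nlhyptype$, together with the fact that the context operations described in Lemma~\ref{lem:operations-mono} act functorially on hypergraphs, so replacing $\hypfun{l}$ by $\hypfun{r}$ at the monomorphism level commutes with the interpretation of the surrounding trace/composition/tensor structure.
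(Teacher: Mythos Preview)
Your proposal is correct and follows essentially the same route as the paper's proof: both directions hinge on Lemma~\ref{lem:subterms}, the I-matchings theorem, and the operations-as-monomorphisms lemma.

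The one place you diverge is in the $(\Leftarrow)$ direction. The paper simply observes that the DPO diagram gives monomorphisms $\hypfun{l} \to \hypfun{g}$ and $\hypfun{r} \to \hypfun{h}$, invokes Lemma~\ref{lem:subterms} twice to conclude that $l$ is a subterm of $g$ and $r$ is a subterm of $h$, and stops there. You instead work harder to show that the \emph{same} context $\hat{f_1},\hat{f_2},n,x$ witnesses both subterm relations, using uniqueness of pushout complements and composite definability. Your version is the more rigorous one: the definition of $g \rewrite_{\mce} h$ requires a shared context, and the paper's argument leaves that implicit. The ``main obstacle'' you flag is real, and your plan to resolve it via uniqueness of pushout complements in the partial adhesive category is the right tool.
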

\begin{proof}
  For $(\Rightarrow)$, assume that $g \rewrite_{\mce} h$. By Definition \ref{def:term-rewriting} this means that there exists $\rrule{l}{r} \in \mcr$ such that
  \begin{center}
      \includestandalone[scale=0.8]{tikz/term-rewrite}
  \end{center}
  \noindent For the if direction, we use $\hypfun{-}$ to translate the rewrite rule $\rrule{l}{r}$ into a rewrite rule in the language of interfaced linear hypergraphs as described in Definition~\ref{def:rewrite-rule}.
  Therefore there exists a span $\htrim{\hypfun{l}} \leftarrow \io{\hypfun{l}} \to \htrim{\hypfun{r}}$.
  Since $l$ is a subterm of $f$, $L$ corresponds to a subgraph of $\hypfun{g}$ by Lemma \ref{lem:subterms}, so there exists a monomorphism $\hypfun{l} \to \hypfun{g}$. 
  Any monomorphism $\hypfun{l} \to \hypfun{g}$ is an $\interface$-matching by Theorem~\ref{thm:matchings} so the pushout complement of $\io{\hypfun{l}} \rightarrow \htrim{\hypfun{l}} \rightarrow \htrim{\hypfun{g}}$ exists.
  We can use the same procedure for $\htrim{\hypfun{r}}$ and $\htrim{\hypfun{h}}$ and assert that the pushout complement of $\io{\hypfun{r}} \to \htrim{\hypfun{r}} \to \htrim{\hypfun{h}}$ also exists. 
  We know that the two pushout complements are $C_L = \htrim{\hypfun{g}} - \htrim{\hypfun{l}}$ and $C_R = \htrim{\hypfun{h}} - \htrim{\hypfun{r}}$ respectively.
  Since the only difference between $\htrim{\hypfun{g}}$ and $\htrim{\hypfun{h}}$ is $\htrim{\hypfun{l}}$ and $\htrim{\hypfun{r}}$ by definition of rewrite rules, $C_L = C_R$. 
  We name this pushout complement $C$, and note that this implies that $\htrim{\hypfun{l}} \to \htrim{\hypfun{g}} \leftarrow C$ and $\htrim{\hypfun{r}} \to \htrim{\hypfun{h}} \leftarrow C$ are pushouts.
  By Lemma~\ref{lem:interface-g-h}, $\io{\hypfun{g}}$ interfaces $\htrim{\hypfun{g}}$ and $\io{\hypfun{h}}$ interfaces $\htrim{\hypfun{h}}$, and as $\hypfun{g}$ and $\hypfun{h}$ have the same type, $\io{\hypfun{g}} \equiv \io{\hypfun{h}}$: we name this hypergraph $J$.
  We can conclude that there exist morphisms $J \to \htrim{\hypfun{g}}$, $J \to C$ and $J \to \htrim{\hypfun{h}}$.
  Therefore the DPO diagram specified in Definition~\ref{def:dpo} exists, so $G \grewrite_{\hypfun{\mce}} H$.

  For the only if direction, we assume that $\hypfun{g} \grewrite_{\hypfun{\mce}} \hypfun{h}$ for some rewrite rule $\rrule{\hypfun{l}}{\hypfun{r}}$. 
  This means there exists a DPO diagram with matchings $\htrim{\hypfun{l}} \to \htrim{\hypfun{g}}$ and $\htrim{\hypfun{r}} \to \htrim{\hypfun{h}}$. 
  Since every monomorphism in $\lilhyp$ corresponds to a matching in $\labhyp$ by Theorem~\ref{thm:matchings}, this means there must exist monomorphisms $\hypfun{l} \to \hypfun{g}$ and $\hypfun{r} \to \hypfun{h}$. 
  Therefore by Lemma \ref{lem:subterms}, $l$ is a subterm of $g$ and $r$ is a subterm of $h$, so $g \rewrite_{\mce} h$.
\end{proof}

\noindent The results of this section gives us a graph rewriting system for rules that are spans of monomorphisms, i.e. those in which no vertices of the interface $K$ are collapsed into one. 
This is since the pushout complement is only uniquely defined for $K \xrightarrow{p} \htrim{L} \rightarrow \htrim{G}$ if $p$ is mono.
However, there are cases where rewrite rules that are \emph{not} spans of monomorphisms might be desirable, such as a rule introducing an edge to an empty wire.

\begin{center}
  \includestandalone[scale=0.6]{tikz/dpo/dpo-example-not-ll-rr}
\end{center}

\noindent Since the left leg of the span is not a monomorphism, the pushout complement is not unique and therefore multiple derivations can be performed for one matching.
Even more crucially, one of the derivations would be degenerate if translated back into an interfaced linear hypergraph: there is a situation where a source connects to a source!

\begin{center}
  \includestandalone[scale=0.5]{tikz/dpo/dpo-example-not-ll-1}
  \quad
  \includestandalone[scale=0.5]{tikz/dpo/dpo-example-not-ll-2}  
\end{center}

\noindent In~\cite{bonchi2016rewriting} this phenomenon is permitted thanks to a compact closed structure in which wires can be directed arbitrarily. 
However in our traced context we have a strict notion of source and target which cannot be altered.
To solve this issue, we can use homeomorphism to perform an expansion on the left hand side of the rule. 
This yields us a span of monomorphisms as desired and guarantees us a unique pushout complement.
Once we have completed our rewriting, we can perform any smoothings if necessary in order to retrieve a minimal hypergraph.

\begin{center}
  \includestandalone[scale=0.6]{tikz/dpo/dpo-example-not-ll-s}  
\end{center}

\section{Case study: digital circuits}\label{sec:case-study}

The initial motivation for developing a graph rewriting system was for use as an operational semantics for digital circuits. 
In this section we will detail how we can apply our framework for this purpose.

As detailed in \S\ref{sec:cartesian}, when a STMC is Cartesian, it admits a \emph{fixpoint operator}, which we can use to represent feedback.
This is leads immediately to models of digital circuits.
However, as we must now consider terms up to the axioms of Cartesian categories in addition to the axioms of STMCs, equality is no longer captured by graph isomorphism and we must use graph rewriting.
In the hypergraph interpretation the diagonal morphism and the unique morphism into the terminal object are represented as edges, and the Cartesian axioms are expressed as graph rewrite rules.

\begin{example}\label{ex:dpo}

We wish to interpret the Cartesian axioms in $\ntermtype$ as graph rewrite rules in $\labhyp$. 
We start our example by interpreting naturality of $\ccopy{1}$ as a span of monomorphisms $\Rightarrow_{\ccopy{1}} : L \xleftarrow{m} C \xrightarrow{n} R$ in $\labhyp$:
\begin{center}
  \includestandalone[width=0.5\textwidth]{tikz/dpo/naturality-copy}
\end{center}
Terms such as $\trace{1}{\join \tensor\, f \seq \swap{1}{1} \seq \ccopy{1} \tensor 1}$ (with $\join$ as defined in \S~\ref{sec:hypergraphs}) illustrate why the graph rewriting approach is useful. 
The symmetry obscures the existence of the left hand side of $\Rightarrow_\ccopy{n}$, so rewriting \textit{qua} term requires the expenditure of additional computation to identify the redex. 
In contrast, in the hypergraph representation the rule is more easily identifiable as seen in \figurename~\ref{fig:dpo-example}.

\begin{figure}[tb]
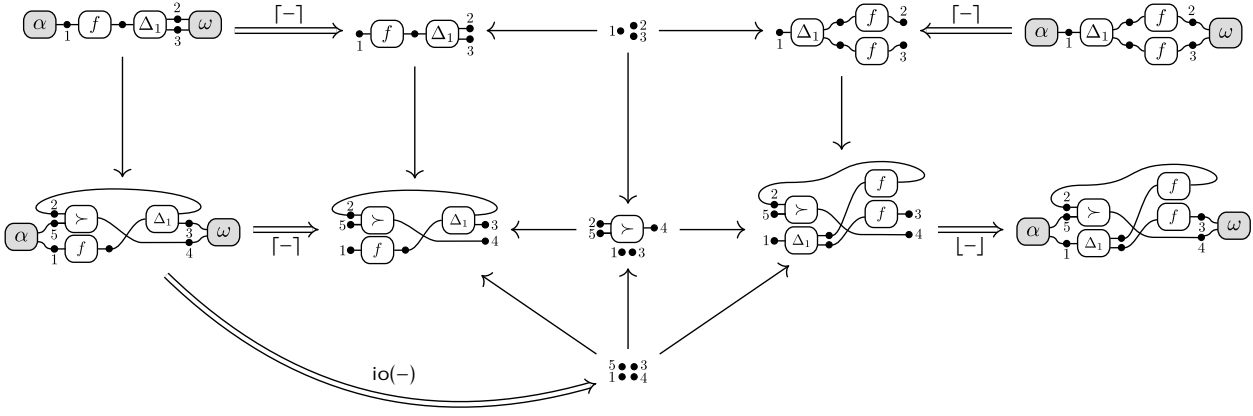

    \centering
    \includestandalone[width=\textwidth]{tikz/dpo/dpo-example-ilh}
    \caption{An example of a DPO diagram using the Cartesian copy axiom.}
    \label{fig:dpo-example}
\end{figure}
\end{example}

\noindent We present digital circuits as morphisms in a free traced PROP, where the objects correspond to the number of wires in a bus~\cite{ghica2016categorical}. 
Morphisms are freely generated over a \emph{circuit signature} containing values $\morph{v}{0}{1}$, forming a lattice, gates $\morph{k}{m}{1}$, and `special morphisms' for forking ($\morph{\fork}{1}{2}$), joining ($\morph{\join}{2}{1}$) and stubbing ($\morph{\stub}{1}{0}$) wires.
We write $\mathbf{v}=v_0\tensor\cdots\tensor v_m$. 
Gates and special morphisms have associated axioms:
\begin{align*}
    &v \,\seq \fork\, = v \tensor v & 0 \to 2 \\
    &v \tensor v' \seq \join\, = v \lub v' & 0 \to 1 \\
    &v \seq \stub = 0 & 0 \to 0 \\
    &\textbf{v} \seq k = v \text{ s.t. if } v_i \in \mathbf{v} \sqsupseteq v_i' \in \mathbf{v'} \text{ then } \mathbf{v} \seq k \sqsupseteq \mathbf{v'} \seq k
     & 0 \to 1
\end{align*}
Additionally, we can use the special morphisms to define Cartesian copy and delete maps, along with a dual notion of maps $\cmerge{n}$ to join buses of arbitrary size. 
The outputs of circuits depend wholly on their inputs, so a circuit will always give the same output for some given inputs. 
It follows that any circuits with zero outputs are considered to be equal.

Delay is represented as a morphism $\morph{\delay_t}{1}{1}$, parameterised over a \textit{time monoid} $t \in \mathbf{T}$. 
Since this adds a temporal aspect to our circuits, we switch from values to \emph{waveforms}, sequences of values over time. 
The axioms of  delay are:
\begin{align*}
&\delay \seq k = k \seq \delay & m \to 1 \\
&\delay^2 \tensor v \tensor v' \seq \cmerge{2} \seq k = ((\delay^2 \seq k) \tensor (v \tensor v' \seq k) \seq \cmerge{1} & 2 \to 1 \\ 
&\bot \seq \delay = \bot & 0 \to 0 \\
&\delay \seq \stub = \stub & 0 \to 0
\end{align*}
Of most interest is the second axiom (\emph{Streaming}), an interaction between gates and time which corresponds to stream manipulation.
Two copies of the gate $k$ are used, one to handle the `head' of the waveform and the other to handle the `tail'.
Feedback is represented using the trace, so the fixpoint operator can be used to `unfold' the circuit.

We have already seen in Example~\ref{ex:dpo} that term rewriting can be computationally awkward.
Another problem in the reduction of circuits is that some of the local traces can be unproductive due to infinite unfolding.
The key result in~\cite{ghica2016categorical} is that a circuit can be converted to a graph and brought to a normal form in which the reduction can be made efficient.
A guarantee can be given for a circuit to be either productive or, if unproductive, the lack of productivity can be efficiently detected.

\section{Generalisation}\label{sec:generalisation}

So far, we have considered only terms in PROPs, where objects are natural numbers and tensor product is addition.
This corresponds especially well with the notion of a linear hypergraph with $m$ inputs and $n$ outputs corresponding to a morphism $m \to n$. 
However, we may also wish to represent categories with arbitrary objects, as we do with string diagrams. 
It is not difficult to generalise our hypergraphs to correspond to terms in \emph{any} STMC with set of objects $\vhypsig$ and morphisms freely generated over signature $\ehypsig$.
We fix this STMC here as $\gtermtype$.

Rather than just thinking of the type of linear hypergraphs as $m \to n$, we could instead decompose them into the form $1 \tensor 1 \tensor \cdots \tensor 1 \to 1 \tensor 1 \tensor \cdots \tensor 1$, since tensor product is addition.
In fact, this corresponds even more closely to our graphical notation, as each of the `wires' in the graph represents the natural number $1$. 
However, when we write the types this way, we see that there is no reason to restrict ourselves to just numbers. The wires can instead correspond to arbitrary objects, as in string diagrams.
We can communicate this idea in our hypergraphs by labelling vertices as well as edges.
These labels correspond to the objects in our category. 

We extend hypergraph signatures with this vertex label set $\vhypsig$ to create a \emph{generalised hypergraph signature}.
Since the inputs and outputs of boxes are tensors of objects rather than natural numbers, edge labels now have associated domains and codomains of elements of the free monoid under $\tensor$ on $\vhypsig$ rather than arities and coarities of natural numbers.
We introduce the vertex labelling operations $\morph{\vtlabels}{\vt}{\vhypsig^\mon}$ and $\morph{\vslabels}{\vs}{\vhypsig^\mon}$ and the following conditions:

\begin{itemize}
    \item For all $e \in \edges$, $\vslabels(\vs[e]) = \dom{\labels(e)}$
    \item For all $e \in \edges$, $\vtlabels(\vt[e]) = \cod{\labels(e)}$
    \item For all $t\in \vt$ and $s \in \vs$, $(\vtlabels \circ \vconnsr)(t) = \vslabels(s)$
\end{itemize}

\begin{example}[Generalised interfaced linear hypergraph]
    Below is an example of a generalised interfaced linear hypergraph of type $A \to D$, for a generalised signature \[(\ehypsig,\vhypsig) = (\{\morph{\,{\fork}{}}{C}{B \tensor D}, \morph{\,\join{}}{B \tensor A}{C}\},\{A, B, C\}).\] 
    The corresponding term is $\trace{C}{{\fork} \tensor \id[A] \seq \id[B] \tensor \swap{D}{A} \seq \join \tensor \,\id[D]}$.
    
    \begin{center}
        \includesvg[scale=0.6]{example-generalised}
    \end{center}
\end{example}

\noindent Generalised interfaced linear hypergraph homomorphisms are as with interfaced linear hypergraphs but with the addition of vertex label conditions.

\begin{center}
    \includestandalone{tikz/hyp-homo/vslabels-pres}
    \includestandalone{tikz/hyp-homo/vtlabels-pres}
\end{center}

\noindent As before, generalised interfaced linear hypergraphs over the signature $(\ehypsig,\vhypsig)$ are the objects in the category $\glhyp^\interface$, with generalised interfaced linear hypergraph homomorphisms as the morphisms between them.
Operations on generalised hypergraphs are defined similarly to before but with the addition of vertex labels, which are affected in the same way as edge labels. 
Therefore, we can also form a category where the generalised linear hypergraphs are morphisms:

\begin{definition}[Generalised category of hypergraphs]
    Generalised interfaced linear hypergraphs over the signature $(\ehypsig,\vhypsig)$ form a symmetric traced monoidal category $\ghypterm$ with objects the elements of ${\vhypsig}^\mon$, elements of $\ghypterm(M,N)$ as the generalised hypergraphs of type $M \to N$, unit of composition as the identity hypergraph, monoidal tensor as $\tensor$ as the empty hypergraph, symmetry on $M$ and $N$ as $\swap{M}{N}$, and trace on $X$ as \[\morph{\trace{X}{-}}{\ghypterm(X~\tensor~M,X~\tensor~N)}{\ghypterm(M,N)}.\]
    
\end{definition}

\noindent We define the operations \begin{align*}
    \morph{\hypfun{-}_{\ehypsig,\vhypsig}}{\gtermtype}{\ghypterm} \\ \morph{\term_{\ehypsig,\vhypsig,\leq}}{\ghypterm}{\gtermtype}
\end{align*} in a similar manner to before. Soundness and completeness translate smoothly into the generalised case.

\begin{theorem}[Generalised soundness]
    For any morphisms $f,g \in \gtermtype$ if $f=g$ under the equational theory of the category then their interpretations as morphisms are isomorphic, $\hypfun{f}_{\ehypsig,\vhypsig}~\equiv~\hypfun{g}_{\ehypsig,\vhypsig}$.
\end{theorem}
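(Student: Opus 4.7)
The plan is to lift the original soundness proof (Corollary \ref{corollary:soundness}) verbatim to the generalised setting, since nothing in the argument used the specific fact that objects were natural numbers beyond the formal bookkeeping of wire counts. Concretely, I would first establish that $\ghypterm$ is an STMC (the generalised analogue of Theorem \ref{thm:stmc}), and then argue that $\hypfun{-}_{\ehypsig,\vhypsig}$ is an identity-on-objects traced monoidal functor, so equalities in the equational theory of $\gtermtype$ are sent to equalities of morphisms in $\ghypterm$, which by definition of the hom-sets of $\ghypterm$ are isomorphism classes of generalised linear hypergraphs.

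For the first step I would re-prove the analogues of Propositions \ref{prop:well-formed-seq}, \ref{prop:well-formed-parallel} and \ref{prop:well-formed-trace} in the generalised setting, where ``well-formed'' now includes the three vertex-label conditions on $\vtlabels$, $\vslabels$ and the compatibility $\vtlabels \circ \vconnsr = \vslabels$. For composition $F \seq G$ of typed hypergraphs of matching type, the outputs of $F$ and inputs of $G$ are deleted and their connected partners are rewired through $\vconnsr_F$ and $\vconnsr_G$; since the type-matching hypothesis forces $\vslabels_F$ on $\outputs[F]$ and $\vtlabels_G$ on $\inputs[G]$ to agree pointwise, the combined compatibility condition on the composite holds. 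Tensor is straightforward since it is merely a disjoint union at the label level, and trace of one wire merely re-routes along one input/output pair which, again by type-matching, preserves the label compatibility. Bifunctoriality (Proposition \ref{prop:bif}), naturality of swap (Proposition \ref{prop:swapnat}) and the four traced-monoidal axioms (tightening, yanking, superposing, exchange) then go through as in Theorem \ref{thm:stmc}, because the isomorphisms exhibited there were determined entirely by vertex and edge identifications that automatically respect labels.

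For the second step I would verify that $\hypfun{-}_{\ehypsig,\vhypsig}$ sends each generator $\phi : A_1 \tensor \cdots \tensor A_m \to B_1 \tensor \cdots \tensor B_n$ to the single-edge hypergraph whose target vertices are labelled by the $A_i$ and whose source vertices are labelled by the $B_j$, and that the inductive clauses
\[
\hypfun{f \seq g} = \hypfun{f} \seq \hypfun{g}, \qquad
\hypfun{f \tensor g} = \hypfun{f} \tensor \hypfun{g}, \qquad
\hypfun{\trace{X}{f}} = \trace{X}{\hypfun{f}}
\]
are structural definitions, so $\hypfun{-}_{\ehypsig,\vhypsig}$ is a traced monoidal functor by construction. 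Functoriality then gives the conclusion: an equation $f = g$ derivable in the equational theory of $\gtermtype$ is built inductively from the STMC axioms, and each axiom has been shown to hold in $\ghypterm$, so $\hypfun{f}_{\ehypsig,\vhypsig}$ and $\hypfun{g}_{\ehypsig,\vhypsig}$ are equal as morphisms in $\ghypterm$, i.e. isomorphic as generalised linear hypergraphs.

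The main obstacle I expect is purely bookkeeping: repeating the label-preservation check for every operation, paying particular attention to the compatibility $\vtlabels \circ \vconnsr = \vslabels$, which is the only genuinely new condition. In composition this requires observing that when we delete $\outputs[F]$ and $\inputs[G]$ and ``fuse'' the wires, the type-matching hypothesis $\cod(F) = \dom(G)$ forces the labels at the fusion points to agree, so the rewired $\vconnsr$ still satisfies the compatibility condition. In trace the corresponding observation is that tracing $X$ onto itself forces $\vslabels$ on the $x$th output and $\vtlabels$ on the $x$th input to carry the same label. Once these two local checks are in place, everything else is a word-for-word transcription of the PROP proofs, and the theorem follows.
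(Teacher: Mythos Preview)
Your proposal is correct and is precisely the approach the paper intends: the paper actually gives no proof of this theorem at all, stating only that ``soundness and completeness translate smoothly into the generalised case'' before asserting the result. Your write-up is therefore more detailed than the paper itself, spelling out the one genuinely new check (that the vertex-label compatibility $\vtlabels \circ \vconnsr = \vslabels$ is preserved by composition, tensor, and trace) which the paper leaves entirely implicit.
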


\begin{theorem}[Generalised completeness]
    For any generalised interfaced linear hypergraph $F \in \ghypterm$ there exists a unique morphism $f \in \gtermtype$, up to the equations of the STMC, such that $\hypfun{f}_{\ehypsig,\vhypsig} = F$. 
    Furthermore, for any morphism $f \in \gtermtype$, $\term[\hypfun(f)] = f$.
\end{theorem}

\section{Related and further work}\label{sec:related-further-work}

\subsection{Related work}

Diagrammatic languages for traced categories are certainly not new: their formal presentation as \emph{string diagrams} has existed since the 90s~\cite{joyal1991geometry,joyal1996traced}.
A soundness and completeness theorem for this language, while common knowledge for many years but often omitted or only proved for certain signatures \cite{selinger2010survey}, was finally formally proved in \cite{kissinger2014abstract}.
Combinatorial languages predate even this, having existed since at least the 80s in the guise of \emph{flowchart schemes} \cite{stefuanescu1986feedback,cazanescu1990algebraic,cazanescu1994feedback}. 
This type of diagrams have also been used to show the completeness of finite dimensional vector spaces~\cite{hasegawa2008finite}, and when equipped with a dagger, Hilbert spaces~\cite{selinger2012finite}.
However, it was not shown whether these diagrams were also suitable for graph rewriting in the presence of Cartesian structure, which is essential for our goal of obtaining an operational semantics of digital circuits.

Graphical languages for traced categories have seen many applications, such as to illustrate cyclic lambda calculi~\cite{hasegawa1997recursion}, or to reason graphically about programs~\cite{schweimeier1999categorical}.
But we are not just concerned with diagrammatic languages as a standalone concept: we are interested in the context of performing \emph{graph rewriting} with them in order to reason with additional axioms.
This has been studied in the context symmetric traced categories before, most notably with \emph{open graphs} \cite{dixon2013open}, which were developed into \emph{framed point graphs}~\cite{kissinger2012pictures}, and \emph{hypergraphs}~\cite{bonchi2016rewriting,zanasi2017rewriting,bonchi2018rewriting,bonchi2020string}. 
As we have already established, these approaches were unsuitable in the context of digital circuits, due to the use of a compact closed or Frobenius structure.
However, our work uses some of the building blocks from these frameworks, such as the use of homeomorphism and rewriting using hypergraphs.

Unlike the explicit interfaces in our framework, the hypergraphs in the existing framework are defined via a cospan structure.
Cospans are used in categorical representations of open networks \cite{fong2015decorated,baez2020structured}, with each leg of the cospan representing inputs and outputs respectively.
This is natural in the presence of a Frobenius monoid which allows vertices to be identified arbitrarily.
A similar recipe could have been followed here by requiring the morphisms of the cospan to be injective, which would have simplified some of the proofs.
However, we considered a more elementary presentation in which one can arrive at the desired technical result without relying on avoidable mathematical concepts, which may make it more accessible to a wider audience.
Regardless, it should be easy to see how to encode an interfaced linear hypergraph $\morph{H}{m}{n}$ as a cospan of regular hypergraphs $m \to \htrim{H} \leftarrow n$.

\subsection{Further work}
In this paper we have revisited and solidified the mathematical foundation upon which the graphical language for digital circuits of~\cite{ghica2016categorical} is based.
This graphical language opens up numerous avenues, such as additional opportunities for partial evaluation or allowing us to reason with circuits not handled well by traditional methods, such as cyclic combinational circuits \cite{malik1994analysis}.

A potential next step is to refine the existing categorical semantics of digital circuits and identify any missing axioms.
Indeed, for us to have a complete diagrammatic semantics the underlying categorical framework must of course also be complete! 
Instantiating the categorical semantics to a concrete category~\cite{ghica2017diagrammatic} pointed towards such additional axioms, most notably regarding \emph{unproductive circuits}, as mentioned in the case study.
By identifying these axioms we can present our diagrammatic semantics as a complete package for reasoning about digital circuits.

Another natural future avenue is that of \emph{automating} the graph rewrites.
While it may be simple to identify potential redexes by eye in small systems, in practice there may be many and the derivation procedure would be long and tedious.
Automating rewrites presents some additional issues, such as the task of choosing between different rewrites.
We can take the \emph{global trace-delay form} of \cite{ghica2017diagrammatic} and verify in our formal framework that we do in fact have confluence for our operational semantics.

Our firm, mathematically sound graphical foundation also opens up the opportunity for the development of circuit design tools, following in the footsteps of tools like such as Quantomatic~\cite{kissinger2015quantomatic}, Homotopy.io~\cite{homotopyio} or Cartographer~\cite{sobocinski2019cartographer}, with the aim of bringing the reduction-based operational semantics into a field dominated by simulation. Our approach serves to contrast and complement, not replace, the existing paradigm, and offer an alternative insight into the design and evaluation of digital circuits.

\bibliography{refs}

\appendix

\section{Axioms of STMCs}\label{sec:appendix}

\subsection{Left identity of composition}

For any interfaced linear hypergraph $\morph{F}{m}{n}$, $\id[m] \seq F \equiv F$.

\begin{center}
  \includesvg[scale=0.6]{axioms/id-l}
\end{center}

\subsubsection*{Definitions}
\begin{gather*}
  H = \id[m] \seq F \\
  E_H = E_F \qquad \vs[\interface][H] = \vs[\interface][F] \qquad \vs[e \in E_F][H] = \vs[e][F] \qquad \vt[\interface][H] = [m] \qquad \vt[e \in E_F][H] = \vt[e][F] \qquad \labels_H = \labels_F \\
  \vconnsr_H(t) = \begin{cases}
    \vconnsr_F(\proj{i}(\vt[\interface][F])) & \text{if}\ v = \proj{i}([m]) \\
    \vconnsr_F(t) & \text{otherwise}
  \end{cases}
\end{gather*}

\subsubsection*{Equivalence maps}
\begin{gather*}
  \vmaps{h}(v) = v
  \qquad
  \vmapt{h}(v) = \begin{cases}
    \proj{i}(\vt[\interface][F]) & \text{if}\ v = \proj{i}([m]) \\
    v & \text{otherwise}
  \end{cases}
  \qquad
  \emap{h}(e) = e
\end{gather*}

\subsection{Right identity of composition}

For any interfaced linear hypergraph $\morph{F}{m}{n}$, $F \seq \id[m] \equiv F$.

\begin{center}
  \includesvg[scale=0.6]{axioms/id-r}
\end{center}

\subsubsection*{Definitions}
\begin{gather*}
  H = F \seq \id[n] \\
  E_H = E_F \qquad \vs[\interface][H] = [m] \qquad \vs[e \in E_F][H] = \vs[e][F] \qquad \vt[\interface][H] = \vt[\interface][F] \qquad \vt[e \in E_F][H] = \vt[e][F] \qquad \labels_H = \labels_F \\
  \vconnsr_H(v) = \begin{cases}
    \proj{i}([n]) & \text{if}\ \vconnsr_F(v) = \proj{i}(\vs[\interface][F]) \\
    \vconnsr_F & \text{otherwise}
  \end{cases}
\end{gather*}

\subsubsection*{Equivalence maps}
\begin{gather*}
  \vmaps{h}(v) = v 
  \qquad
  \vmaps{h}(v) = \begin{cases}
    \proj{i}([n]) & \text{if}\ v = \proj{i}(\vs[\interface]) \\
    v & \text{otherwise}
  \end{cases}
  \qquad
  \emap{h}(e) = e
\end{gather*}

\subsection{Associativity of composition}

For any interfaced linear hypergraphs $\morph{F}{m}{n}$, $\morph{G}{n}{p}$ and $\morph{H}{p}{q}$, $(F \seq G) \seq H \equiv F \seq (G \seq H)$.

\begin{center}
  \includesvg[scale=0.6]{axioms/assoc}
\end{center}

\subsubsection*{Definitions}
\begin{gather*}
  K = (F \seq G) \seq H \\ 
  E_K = E_F + E_G + E_H \qquad \labels_K = \labels_F + \labels_G + \labels_H \\
  \vs_K = \vs_F - \vs[\interface][F] + \vs_G - \vs[\interface][G] + \vs_H \qquad \vt_K = \vt_F + \vt_G - \vt[\interface][G] + \vt_H - \vt[\interface][H] \\
  \vconnsr_K(v) = \begin{cases}
    \vconnsr_H(\proj{j}(\vs[\interface][H]) & \text{if}\ \vconnsr_F(v) = \proj{i}(\vs[\interface][F]) \wedge \vconnsr_G(\proj{i}(\vt[\interface][G])) = \proj{j}(\vs[\interface][G]) \\
    \vconnsr_H(\proj{i}(\vs[\interface][H])) & \text{if}\ \vconnsr_G(v) = \proj{i}(\vs[\interface][G]) \\
    \vconnsr_G(\proj{i}(\vs[\interface][G])) & \text{if}\ \vconnsr_F(v) = \proj{i}(\vs[\interface][F]) \\
    \vconnsr_F + \vconnsr_G + \vconnsr_H & \text{otherwise}  
  \end{cases}
\end{gather*}

\noindent The definition of $L = F \seq (G \seq H)$ is identical.

\subsubsection*{Equivalence maps}
\begin{gather*}
  \vmaps{h} = \id \qquad \vmapt{h} = \id \qquad \emap{h} = \id
\end{gather*}

\subsection{Left identity of tensor}

For any interfaced linear hypergraph $\morph{F}{m}{n}$, $\id[0] \tensor F \equiv F$.

\begin{center}
  \includesvg[scale=0.6]{axioms/id-tensor-l}
\end{center}

\subsubsection*{Definitions}
\begin{gather*}
  H = \id[0] \tensor F \\
  E_H = E_F \qquad \vs_H = F \qquad \vt_H = F \qquad \vconnsr_H = \vconnsr_F \qquad \labels_H = \labels_F
\end{gather*}

\subsubsection*{Equivalence maps}
\begin{gather*}
  \vmaps{h} = \id \qquad \vmapt{h} = \id \qquad \emap{h} = \id
\end{gather*}

\subsection{Right identity of tensor}

For any interfaced linear hypergraph $\morph{F}{m}{n}$, $F \tensor \id[0] \equiv F$.

\begin{center}
  \includesvg[scale=0.6]{axioms/id-tensor-r}
\end{center}

\subsubsection*{Definitions}
\begin{gather*}
  H = F \tensor \id[0] \\
  E_H = E_F \qquad \vs_H = F \qquad \vt_H = F \qquad \vconnsr_H = \vconnsr_F \qquad \labels_H = \labels_F
\end{gather*}

\subsubsection*{Equivalence maps}
\begin{gather*}
  \vmaps{h} = \id \qquad \vmapt{h} = \id \qquad \emap{h} = \id
\end{gather*}

\subsection{Associativity of tensor}

For any interfaced linear hypergraphs $\morph{F}{m}{n}$, $\morph{G}{p}{q}$ and $\morph{H}{r}{s}$, $F \tensor (G \tensor H) \equiv (F \tensor G) \tensor H$.

\begin{center}
  \includesvg[scale=0.6]{axioms/assoc-tensor}
\end{center}

\subsubsection*{Definitions}
\begin{gather*}
  K = F \tensor (G \tensor H) \\
  E_K = E_F + E_G + E_H \qquad \labels_H = \labels_F + \labels_G + \labels_H \\ 
  \vs[e \in E_F][K] = \vs[e][F] \qquad \vs[e \in E_G][K] = \vs[e][G] \qquad \vs[e \in E_H][K] = \vs[e][H] \qquad \vs[\interface][K] = \vs[\interface][F] + \vs[\interface][G] + \vs[\interface][H] \\
  \vt[e \in E_F][K] = \vt[e][F] \qquad \vt[e \in E_G][K] = \vt[e][G] \qquad \vt[e \in E_H][K] = \vt[e][H] \qquad \vt[\interface][K] = \vt[\interface][F] + \vt[\interface][G] + \vt[\interface][H]
\end{gather*}

\noindent The definition of $L = (F \tensor G) \tensor H$ is identical.

\subsubsection*{Equivalence maps}
\begin{gather*}
  \vmaps{h} = \id \qquad \vmapt{h} = \id \qquad \emap{h} = \id
\end{gather*}

\subsection{Bifunctoriality of tensor I (Proposition~\ref{prop:bifunctoriality-1})}

For any $m,n \in \nat$, $\id[m] \tensor \id[n] \equiv \id_{m + n}$

\begin{center}
  \includesvg[scale=0.6]{axioms/bifunctoriality-1}
\end{center}

\subsubsection*{Definitions}
\begin{gather*}
  H = \id[m] \tensor \id[n] \\
  E_H = \emptyset \qquad \labels_H = \emptyset \qquad \vs[\interface][H] = [m] + [n] \qquad \vt[\interface][H] = [m] + [n] \qquad \vconnsr(\proj{i}(\vt[\interface][H])) = \proj{i}(\vs[\interface][H]) \\
  L = \id_{m + n} \\
  E_H = \emptyset \qquad \labels_H = \emptyset \qquad \vs[\interface][H] = [m + n] \qquad \vt[\interface][H] = [m + n] \qquad \vconnsr(\proj{i}(\vt[\interface][H])) = \proj{i}(\vs[\interface][H]) 
\end{gather*}

\subsection*{Equivalence maps}
\begin{gather*}
  \vmaps{h}(s) = \begin{cases}
    \proj{i}([m + n]) & \text{if}\ s = \proj{i}([m]) \\
    \proj{i + m}([m + n]) & \text{if}\ s = \proj{i}{[n]}
  \end{cases}
  \qquad
  \vmapt{h}(t) = \begin{cases}
    \proj{i}([m + n]) & \text{if}\ t = \proj{i}([m]) \\
    \proj{i + m}([m + n]) & \text{if}\ t = \proj{i}{[n]}
  \end{cases}
  \qquad
  \emap{h} = \emptyset
\end{gather*}

\subsection{Bifunctoriality of tensor II (Proposition~\ref{prop:bifunctoriality-2})}

For any interfaced linear hypergraphs $\morph{F}{m}{n}$, $\morph{G}{r}{s}$, $\morph{H}{n}{p}$ and $\morph{K}{s}{t}$, $F \tensor G \seq H \tensor K \equiv (F \seq H) \tensor (G \seq K)$.

\begin{center}
  \includesvg[scale=0.6]{axioms/bifunctoriality-2}
\end{center}

\subsubsection*{Definitions}
\begin{gather*}
  L = F \tensor G \seq H \tensor K \\
  E_L = E_F + E_G + E_H + E_K \qquad 
  \labels_L = \labels_F + \labels_G + \labels_H + \labels_L \\
  \vs[\interface][L] = \vs[\interface][H] + \vs[\interface][L] \quad\,
  \vs[e \in E_F][L] = \vs[e][F] \quad\,
  \vs[e \in E_G][L] = \vs[e][G] \quad\,
  \vs[e \in E_H][L] = \vs[e][H] \quad\,
  \vs[e \in E_K][L] = \vs[e][K] \\
  \vt[\interface][L] = \vt[\interface][F] + \vt[\interface][G] \quad\,
  \vt[e \in E_F][L] = \vt[e][F] \quad\,
  \vt[e \in E_G][L] = \vt[e][G] \quad\,
  \vt[e \in E_H][L] = \vt[e][H] \quad\,
  \vt[e \in E_K][L] = \vt[e][K] \\
  \vconnsr(t) = \begin{cases}
    \vconnsr(\proj{i}(\vt[\interface][H])) & \text{if}\ \vconnsr_F(t) = \proj{i}(\vs[\interface][F]) \\
    \vconnsr(\proj{i}(\vt[\interface][K])) & \text{if}\ \vconnsr_G(t) = \proj{i}(\vs[\interface][G]) \\
    (\vconnsr_F + \vconnsr_G + \vconnsr_H + \vconnsr_K)(t) & \text{otherwise}
  \end{cases}
\end{gather*}

\noindent The definition of $A = (F \seq H) \tensor (G \tensor K)$ is identical.

\subsection*{Equivalence maps}
\begin{gather*}
  \vmaps{h} = \id \qquad \vmapt{h} = \id \qquad \emap{h} = \id
\end{gather*}

\subsection{Naturality of swap (Proposition~\ref{prop:naturality-swap})}

For any interfaced linear hypergraphs $\morph{F}{m}{n}$ and $\morph{G}{p}{q}$, $F \tensor G \seq \swap{n}{q} \equiv \swap{m}{p} \seq G \tensor F$.

\begin{center}
  \includesvg[scale=0.6]{axioms/naturality-swap}
\end{center}

\subsubsection*{Definition}
We use the `alternate' representation of the swap hypergraph defined in Lemma~\ref{lem:alternate-swap}.

\begin{gather*}
  H = F \tensor G \seq \swap{n}{q} \\
  E_H = E_F + E_G \qquad
  \vs[\interface][H] = [n] + [q] \qquad
  \vs[e \in E_F][H] = \vs[e][F] \qquad
  \vs[e \in E_G][H] = \vs[e][G] \\
  \labels_H = \labels_F + \labels_G \qquad \vt[\interface][H] = \vt[\interface][F] \qquad
  \vt[e \in E_F][H] = \vt[e][F] \qquad
  \vt[e \in E_G][H] = \vt[e][G] \\
  \vconnsr_H(t) = \begin{cases}
    \proj{i}([n]) & \text{if}\ \vconnsr_F(t) = \proj{i}(\vs[\interface][F]) \\
    \proj{i}([q]) & \text{if}\ \vconnsr_G(t) = \proj{i}(\vs[\interface][G]) \\
    (\vconnsr_F + \vconnsr_G)(t) & \text{otherwise}
  \end{cases} \\ \\
  K = \swap{m}{p} \seq F \tensor G \\
  E_H = E_F + E_G \qquad
  \vs[\interface][H] = \vs[\interface][F] \qquad
  \vs[e \in E_F][H] = \vs[e][F] \qquad
  \vs[e \in E_G][H] = \vs[e][G] \\
  \labels_H = \labels_F + \labels_G \qquad 
  \vt[\interface][H] = [m] + [p] \qquad
  \vt[e \in E_F][H] = \vt[e][F] \qquad
  \vt[e \in E_G][H] = \vt[e][G] \\
  \vconnsr_H(t) = \begin{cases}
    \vconnsr_F(\proj{i}(\vt[\interface][F]) & \text{if}\ t = \proj{i}([m]) \\
    \vconnsr_G(\proj{i}(\vt[\interface][G]) & \text{if}\ t = \proj{i}([p]) \\
    (\vconnsr_F + \vconnsr_G)(t) & \text{otherwise}
  \end{cases} \\
\end{gather*}

\subsection*{Equivalence maps}
\begin{gather*}
  \vmaps{h}(s) = \begin{cases}
    \proj{i}(\vs[\interface][F]) & \text{if}\ s = \proj{i}([m]) \\
    \proj{i}(\vs[\interface][G]) & \text{if}\ s = \proj{i}([p]) \\
    s & \text{otherwise}
  \end{cases}
  \qquad
  \vmapt{h}(t) = \begin{cases}
    \proj{i}([m]) & \text{if}\ s = \proj{i}(\vt[\interface][F]) \\
    \proj{i}([p]) & \text{if}\ s = \proj{i}(\vt[\interface][G]) \\
    t & \text{otherwise}
  \end{cases}
  \qquad
  \emap{h} = \id
\end{gather*}

\subsection{Hexagon axiom}

For any $m,n,p \in \nat$, $\swap{m}{n} \tensor \id[p] \seq \id[n] \tensor \swap{m}{p} \equiv \swap{m}{n+p}$.

\begin{center}
  \includesvg[scale=0.6]{axioms/hexagon}
\end{center}

\subsubsection*{Definitions}
\begin{gather*}
  H = \swap{m}{n} \tensor \id[p] \seq \id[n] \tensor \swap{m}{p} \\
  E_H = \emptyset \qquad \labels_H = \emptyset \qquad
  \vs[\interface][H] = [n]_s + [p]_s + [m]_s \qquad \vt[\interface][H] = [m]_t + [n]_t + [p]_t \\
  \vconnsr_H(\proj{i}([m]_t) = \proj{i}([m]_s) \qquad \vconnsr_H(\proj{i}([n]_t) = \proj{i}([n]_s) \qquad \vconnsr_H(\proj{i}([p]_t) = \proj{i}([p]_s) \\\\
  K = \swap{m}{n+p} \\
  E_K = \emptyset \qquad \labels_K = \emptyset \qquad
  \vs[\interface][K] = [n + p]_s + [m]_s \qquad \vt[\interface][K] = [m]_t + [n + p]_t \\
  \vconnsr_K(\proj{i}([m]_t)) = \proj{i}([m]_s) \qquad \vconnsr_K(\proj{i}([n+p]_t)) = \proj{i}([n+p]_s)
\end{gather*}

\subsubsection*{Equivalence maps}
\begin{gather*}
  \vmaps{h}(v) = \begin{cases}
    \proj{i}([n + p]_s) & \text{if}\ v = \proj{i}([n]_s) \\
    \proj{i + n}([n + p]_s) & \text{if}\ v = \proj{i}([p]_s) \\
    \proj{i}([m]_s) & \text{if}\ v = \proj{i}([m]_s)
  \end{cases} \qquad
  \vmapt{h}(v) = \begin{cases}
    \proj{i}([n + p]_t) & \text{if}\ v = \proj{i}([n]_t) \\
    \proj{i + n}([n + p]_t) & \text{if}\ v = \proj{i}([p]_t) \\
    \proj{i}([m]_t) & \text{if}\ v = \proj{i}([m]_t)
  \end{cases} \qquad
  \emap{h} = \emptyset
\end{gather*}

\subsection{Self-invertability}

For any $m,n \in \nat$, $\swap{m}{n} \seq \swap{n}{m} \equiv \id_{m + n}$.

\begin{center}
  \includesvg[scale=0.6]{axioms/self-inverse}
\end{center}

\subsubsection*{Definitions}
\begin{gather*}
  H = \swap{m}{n} \seq \swap{n}{m} \\
  E_H = \emptyset \qquad \labels_H = \emptyset \qquad \vs[\interface][H] = [m]_s + [n]_s \qquad \vt[\interface][H] = [m]_t + [n]_t \\
  \vconnsr_H(\proj{i}([m]_t)) = \proj{i}([m]_s) \qquad \vconnsr_H(\proj{i}([n]_t)) = \proj{i}([n]_s)
\end{gather*}

\noindent The definition of $\id_{m + n}$ is identical.

\subsubsection*{Equivalence maps}
\begin{gather*}
  \vmaps{h} = \id \qquad \vmapt{h} = \id \qquad \emap = \emptyset
\end{gather*}

\subsection{Tightening}

For any interfaced linear hypergraphs $\morph{F}{x + m}{x + n}$, $\morph{G}{p}{m}$ and $\morph{H}{n}{q}$, \[\trace{x}{\id[x] \tensor G \seq F \seq \id[x] \tensor H} \equiv G \seq \trace{x}{F} \seq H.\]

\begin{center}
  \includesvg[scale=0.6]{axioms/tightening}
\end{center}

\noindent This proof is by induction on $x$.

\subsubsection*{Zero case: $x = 0$}
\[\trace{0}{\id[0] \tensor G \seq F \seq \id[0] \tensor H} \equiv G \seq \trace{0}{F} \seq H\]
\begin{align*}
  \trace{0}{\id[0] \tensor G \seq F \seq \id[0] \tensor H} & \equiv \id[0] \tensor G \seq F \seq \id[0] \tensor H & \text{definition of trace} \\
  & \equiv G \seq F \seq \id[0] \tensor H & \text{left identity of tensor} \\
  & \equiv G \seq F \seq H & \text{left identity of tensor} \\
  & \equiv G \seq \trace{0}{F} \seq H & \text{definition of trace}
\end{align*}

\subsubsection*{Base case: $x = 1$}
\[\trace{1}{\id[1] \tensor G \seq F \seq \id[1] \tensor H} \equiv G \seq \trace{1}{F} \seq H\]
\subsubsection*{Definitions}
\begin{gather*}
  K = \trace{1}{\id[1] \tensor G \seq F \seq \id[1] \tensor H} \\
  E_K = E_F + E_G + E_H \qquad E[{\id}] = \{e_{\id}\} \qquad \labels_K = \labels_F + \labels_G + \labels_H \\
  \vs[\interface][K] = \vs[\interface][G] \qquad \vs[e_{\id}][K] = [1]_s \qquad \vs[e \in E_F][K] = \vs[e][F] \qquad \vs[e \in E_G][K] = \vs[e][G] \qquad \vs[e \in E_H][K] = \vs[e][H] \\
  \vt[\interface][K] = \vt[\interface][H] \qquad \vt[e_{\id}][K] = [1]_t \qquad \vt[e \in E_F][K] = \vt[e][F] \qquad \vt[e \in E_G][K] = \vt[e][G] \qquad \vt[e \in E_H][K] = \vt[e][H] \\
  \vconnsr_K(t) = \begin{cases}
    \vconnsr_H(\proj{i-1}(\vt[\interface][H])) & \text{if}\ t = \proj{0}([1]_t) \wedge \vconnsr_F(\proj{0}(\vt[\interface][F])) = \proj{i}(\vs[\interface][F]) \\
    \vconnsr_F(\proj{0}(\vt[\interface][F])) & \text{if}\ t = \proj{0}([1]_t) \\ 
    \proj{0}([1]_s) & \text{if}\ \vconnsr_F(t) = \proj{0}(\vs[\interface][F]) \\
    \vconnsr_H(\proj{i-1}(\vt[\interface][H])) & \text{if}\ \vconnsr_F(t) = \proj{i}(\vs[\interface][F]) \\
    \proj{0}([1]_s) & \text{if}\ \vconnsr_G(t) = \proj{i}(\vs[\interface][G]) \wedge \vconnsr_F(\proj{i}(\vt[\interface][F])) = \proj{0}(\vs[\interface][F]) \\
    \vconnsr_F(\proj{i+1}(\vt[\interface][F])) & \text{if}\ \vconnsr_G(t) = \proj{i}(\vs[\interface][G]) \\
    (\vconnsr_F + \vconnsr_G + \vconnsr_H)(v) & \text{otherwise}
  \end{cases} \\ \\ 
  L = G \seq \trace{1}{F} \seq H \\
  E_L = E_F + E_G + E_H \qquad E[{\id}] = \{e_{\id}\} \qquad \labels_L = \labels_F + \labels_G + \labels_H \\
  \vs[\interface][L] = \vs[\interface][G] \qquad \vs[e_{\id}][L] = \{\proj{0}(\vs[\interface][F])\} \qquad \vs[e \in E_F][L] = \vs[e][F] \qquad \vs[e \in E_G][L] = \vs[e][G] \qquad \vs[e \in E_H][L] = \vs[e][H] \\
  \vt[\interface][L] = \vt[\interface][H] \qquad \vt[e_{\id}][K] = \{\proj{0}(\vt[\interface][F])\} \qquad \vt[e \in E_F][L] = \vt[e][F] \qquad \vt[e \in E_G][L] = \vt[e][G] \qquad \vt[e \in E_H][L] = \vt[e][H] \\
  \vconnsr_L(t) = \begin{cases}
    \vconnsr_H(\proj{i-1}(\vt[\interface][H])) & \text{if}\ \vconnsr_F(t) = \proj{i}(\vs[\interface][F]) \\
    \vconnsr_F(\proj{i+1}(\vt[\interface][F])) & \text{if}\ \vconnsr_G(t) = \proj{i}(\vs[\interface][G]) \\
    (\vconnsr_F + \vconnsr_G + \vconnsr_H)(v) & \text{otherwise}
  \end{cases}
\end{gather*}

\subsubsection*{Equivalence maps}
\begin{gather*}
  \vmaps{h}(v) = \begin{cases}
    \proj{0}(\vs[\interface][F]) & \text{if}\ v = \proj{0}([1]_s) \\
    v & \text{otherwise} 
  \end{cases} \qquad \vmapt{h} = \begin{cases}
    \proj{0}(\vt[\interface][F]) & \text{if}\ v = \proj{0}([1]_t) \\
    v & \text{otherwise} 
  \end{cases}  \qquad \emap{h} = \id
\end{gather*}

\subsubsection*{Inductive case: $x = k + 1$ for $k > 1$}
\[\trace{k+1}{\id_{k+1} \tensor G \seq F \seq \id_{k+1} \tensor H} \equiv G \seq \trace{k+1}{F} \seq H\]
\begin{align*}
  \trace{k + 1}{\id[k + 1] \tensor G \seq F \seq \id[k + 1] \tensor H} & \equiv \trace{1}{\trace{k}{\id[k + 1] \tensor G \seq F \seq \id[k + 1] \tensor H}} & \text{definition of trace}\\
  & \equiv \trace{1}{\trace{k}{k \tensor \id[1] \tensor G \seq F \seq k \tensor \id[1] \tensor H}} & \text{bif of tensor I} \\
  & \equiv \trace{1}{\id[1] \tensor G \seq \trace{k}{F} \seq \id[1] \tensor H} & \text{IH (1)} \\
  & \equiv G \seq \trace{1}{\trace{k}{F}} \seq H & \text{base case (2)} \\
  & \equiv G \seq \trace{k + 1}{F} \seq H & \text{definition of trace}
\end{align*}

\noindent At (1), we use our inductive hypothesis with $m = 1 + m$ and $n = 1 + n$, since it applies for any $m, n \in \nat$. 
At (2), we can use our base case since $\trace{k}{F}$ has type $1 + m \to 1 + n$.

\subsection{Superposing}

For any interfaced linear hypergraph $\morph{F}{x + m}{x + n}$ and $n \in \nat$, \[\trace{x}{F \tensor \id[n]} \equiv \trace{x}{F} \tensor \id[n].\]

\begin{center}
  \includesvg[scale=0.6]{axioms/superposing}
\end{center}

\noindent This proof is by induction on $x$.

\subsubsection*{Zero case: $x = 0$}
\[\trace{0}{F \tensor \id[n]} \equiv \trace{0}{F} \tensor \id[n]\]
This follows immediately by definition of trace.

\subsubsection*{Base case: $x = 1$}
\[\trace{1}{F \tensor \id[n]} \equiv \trace{1}{F} \tensor \id[n]\]
\subsubsection*{Definitions}
\begin{gather*}
  H = \trace{x}{F \tensor G} \\
  E_H = E_F + E_G \qquad E_H[\id] = \{e_{\id}\} \qquad \labels_H = \labels_F + \labels_G \\
  \vs[\interface][H] = \vs[\interface][F] - \proj{0}(\vs[\interface][F]) + [n]_s \qquad \vs[e_{\id}] = \proj{0}(\vs[\interface][F]) \qquad \vs[e \in E_F][H] = \vs[e][F] \\
  \vt[\interface][H] = \vt[\interface][F] - \proj{0}(\vt[\interface][F]) + [n]_t \qquad \vt[e_{\id}] = \proj{0}(\vt[\interface][F]) \qquad \vt[e \in E_F][H] = \vt[e][F] \\
  \vconnsr_H(t) = \begin{cases}
    \proj{i}([n]_s) & \text{if}\ t = \proj{i}([n]_t) \\
    \vconnsr_F(t) & \text{otherwise}
  \end{cases}  
\end{gather*}

\noindent The definition of $L = \trace{x}{F} \tensor G$ is identical.

\subsubsection*{Equivalence maps}
\begin{gather*}
  \vmaps{h} = \id \qquad \vmapt{h} = \id \qquad \emap{h} = \id
\end{gather*}

\subsubsection*{Inductive case: $x = k + 1$ for $k > 1$}
\[\trace{k+1}{F \tensor G} \equiv \trace{k+1}{F} \tensor G\]
\begin{align*}
  \trace{k+1}{F \tensor G} & \equiv \trace{1}{\trace{k}{F \tensor G}} & \text{definition of trace} \\
                           & \equiv \trace{1}{\trace{k}{F} \tensor G} & \text{IH (1)} \\
                           & \equiv \trace{1}{\trace{k}{F}} \tensor G & \text{base case (2)} \\
                           & \equiv \trace{k+1}{F} \tensor G & \text{definition of trace}
\end{align*}

\noindent
At (1), $\morph{\trace{1}{F}}{k + m}{k + n}$ so we can apply our inductive hypothesis. 
At (2), we observe that as $\morph{F}{k + 1 + m}{k + 1 + n}$ then $\trace{k}{F}{1 + m}{1 + n}$, so we can use our base case. 
Therefore $\trace{k + 1}{F \tensor G} = \trace{k + 1}{F} \tensor G$.
Therefore for any $x \in \nat$, $\trace{x}{F \tensor G} = \trace{x}{F} \tensor G$.

\subsection{Yanking}

For any $x \in \nat$, $\trace{x}{\swap{x}{x}} \equiv \id[x]$.
\begin{center}
  \includesvg[scale=0.6]{axioms/yanking}
\end{center}
This proof is by induction on $x$.

\subsubsection*{Zero case: $x = 0$}
\[\trace{0}{\swap{0}{0}} \equiv \id[0]\]
This follows immediately by definition of trace and symmetry.

\subsubsection*{Base case: $x = 1$}
\[\trace{1}{\swap{1}{1}} \equiv \id[1]\]
\subsubsection*{Definitions}
\begin{gather*}
  H = \trace{1}{\swap{1}{1}} \\
  E_H = \emptyset \qquad E_H[\id] = \{e_{\mf{id}}\} \qquad \labels_H = \emptyset \\ 
  \vs[\interface][H] = [1]_{s1} \qquad \vs[e_\mf{id}][H] = [1]_{s2} \qquad \vt[\interface][H] = [1]_{t1} \qquad \vt[e_\mf{id}][H] = [1]_{t2} \\ 
  \vconnsr_H(\proj{0}([1]_{t1}) = \proj{0}([1]_{s2}) \qquad \vconnsr_H(\proj{0}([1]_{t2}) = \proj{0}([1]_{s1})
\end{gather*}
By performing a smoothing we obtain the following:
\begin{gather*}
  L = \mf{smooth}(\trace{1}{\swap{1}{1}}) \\
  E_L = \emptyset \qquad E_L[\id] = \emptyset \qquad \labels_L = \emptyset \\
  \vs[\interface][L] = [1]_{s1} \qquad \vt[\interface][L] = [1]_{t1} \qquad
  \vconnsr_L(\proj{0}([1]_{t1}) = \proj{0}([1]_{s1})
\end{gather*}

\subsubsection*{Equivalence maps}
\begin{gather*}
  \vmaps{h} = \id \qquad \vmapt{h} = \id \qquad \emap{h} = \id
\end{gather*}

\subsubsection*{Inductive case: $x = k+1$ for $k > 1$}
\[\trace{k+1}{\swap{k+1}{k+1}} \equiv \id[{k+1}]\]
The inductive case begins by manipulating $\trace{k+1}{\swap{k+1}{k+1}}$ into a form with traces of only one wire.
\begin{align*}
  \trace{k + 1}{\swap{k + 1}{k + 1}} & \equiv \trace{k + 1}{k \tensor \swap{1}{n} \tensor 1 \seq \swap{k}{k} \tensor \swap{1}{1} \seq k \tensor \swap{k}{1} \tensor 1} & \text{inductive swap} \\
  & \equiv \trace{1}{\trace{k}{k \tensor \swap{1}{k} \tensor 1 \seq \swap{k}{k} \tensor \swap{1}{1} \seq k \tensor \swap{k}{1} \tensor 1}} & \text{definition of trace} \\
  & \equiv \trace{1}{\swap{1}{k} \tensor 1 \seq \trace{k}{\swap{k}{k} \tensor \swap{1}{1}} \seq \swap{k}{1} \tensor 1} & \text{tightening} \\
  & \equiv \trace{1}{\swap{1}{k} \tensor 1 \seq \trace{k}{\swap{k}{k}} \tensor \swap{1}{1} \seq \swap{k}{1} \tensor 1} & \text{superposing} \\
  & \equiv \trace{1}{\swap{1}{k} \tensor 1 \seq k \tensor \swap{1}{1} \seq \swap{k}{1} \tensor 1} & \text{IH} \\
\end{align*}
\subsubsection*{Definitions}
\begin{gather*}
  H = \trace{1}{\swap{1}{k} \tensor 1 \seq k \tensor \swap{1}{1} \seq \swap{k}{1} \tensor 1} \\
  E_H = \emptyset \qquad E_H[\id] = \{e_{\id}\} \labels = \emptyset \\
  \vs[\interface][H] = [k]_s + [1]_{s1} \qquad \vs[e_{\id}][H] = [1]_{s2} \qquad \vt[\interface][H] = [k]_t + [1]_{t1} \qquad \vt[e_{\id}][H] = [1]_{t2} \\
  \vconnsr(t) = \begin{cases}
    \proj{0}([1]_{s2}) & \text{if}\ t = \proj{0}([1]_{t1}) \\
    \proj{0}([1]_{s1}) & \text{if}\ t = \proj{0}([1]_{t2}) \\
    \proj{i}([k]_s) & \text{if}\ t = \proj{i}([k]_t)
  \end{cases} 
\end{gather*}
By performing a smoothing we obtain the following:
\begin{gather*}
  L = \mf{smooth}(\trace{1}{\swap{1}{k} \tensor 1 \seq k \tensor \swap{1}{1} \seq \swap{k}{1} \tensor 1}) \\
  E_L = \emptyset \qquad E_L[\id] = \emptyset \labels = \emptyset \\
  \vs[\interface][L] = [k]_s + [1]_{s1} \qquad \vt[\interface][L] = [k]_t + [1]_{t1} \qquad
  \vconnsr(t) = \begin{cases}
    \proj{0}([1]_{s1}) & \text{if}\ t = \proj{0}([1]_{s1}) \\
    \proj{i}([k]_s) & \text{if}\ t = \proj{i}([k]_t)
  \end{cases} 
\end{gather*}
\subsubsection*{Equivalence maps}
\begin{gather*}
  \vmaps{h} = \id \qquad \vmapt{h} = \id \qquad \emap{h} = \id
\end{gather*}

\subsection{Exchange}

For any interfaced linear hypergraph $\morph{F}{x + y + m}{x + y + n}$,
\[\trace{y}{\trace{x}{F}} \equiv \trace{x}{\trace{y}{\swap{y}{x} \tensor \id[m] \seq F \seq \swap{x}{y} \tensor \id[n]}}.\]

\begin{center}
  \includesvg[scale=0.6]{axioms/exchange}
\end{center}

\noindent This proof is by induction on $x$ and $y$.

\subsubsection*{Zero case I: $x = 0$, $y = k$}
\[\trace{k}{\trace{0}{F}} \equiv \trace{0}{\trace{k}{\swap{k}{0} \tensor \id[m] \seq F \seq \swap{0}{k} \tensor \id[n]}}\]
\begin{align*}
  \trace{k}{\trace{0}{F}} & \equiv \trace{k}{F} & \text{definition of trace}\\
                          & \equiv \trace{0}{\trace{k}{F}} & \text{definition of trace} \\
                          & \equiv \trace{0}{\trace{k}{\id_{k + m} \seq F \seq \id_{k + n}}} & \text{left/right identity} \\
                          & \equiv \trace{0}{\trace{k}{\id[k] \tensor \id[m] \seq F \seq  \id[k] \tensor \id[n]}} & \text{bifunctoriality I} \\
                          & \equiv \trace{0}{\trace{k}{\swap{k}{0} \tensor \id[m] \seq F \seq \swap{0}{k} \tensor \id[n]}} & \text{definition of swap}                          
\end{align*}

\subsubsection*{Zero case II: $x = k$, $y = 0$}
\[\trace{0}{\trace{k}{F}} \equiv \trace{k}{\trace{0}{\swap{0}{k} \tensor \id[m] \seq F \seq \swap{k}{0} \tensor \id[n]}}\]
\begin{align*}
  \trace{0}{\trace{k}{F}} & \equiv \trace{0}{\trace{k}{\id_{k + m} \seq F \seq \id_{k + n}}} & \text{left/right identity} \\
                          & \equiv \trace{0}{\trace{k}{\id[k] \tensor \id[m] \seq F \seq \id[k] \tensor \id[n]}} & \text{bifunctoriality I} \\
                          & \equiv \trace{0}{\trace{k}{\swap{0}{k} \tensor \id[m] \seq F \seq \swap{k}{0} \tensor \id[n]}} & \text{definition of swap} \\
                          & \equiv \trace{k}{\id[k] \tensor \id[m] \seq F \seq \id[k] \tensor \id[n]} & \text{definition of trace} \\
                          & \equiv \trace{k}{\trace{0}{\id[k] \tensor \id[m] \seq F \seq \id[k] \tensor \id[n]}} & \text{definition of trace}
\end{align*}

\subsubsection*{Base case: $x = 1$, $y = 1$}
\[\trace{1}{\trace{1}{F}} \equiv \trace{1}{\trace{1}{\swap{1}{1} \tensor \id[m] \seq F \seq \swap{1}{1} \tensor \id[n]}}\]
\subsubsection*{Definitions}

\begin{gather*}
  H = \trace{1}{\trace{1}{F}} \\
  E_H = E_F \qquad E_H[\id] = \{e_{\id 0}, e_{\id 1}\} \qquad \labels_H = \labels_F \qquad \vconnsr_H = \vconnsr_F\\
  \vs[\interface][H] = \vs[\interface][F] - (\proj{0}(\vs[\interface][F]) + \proj{1}(\vs[\interface][F])) \quad \vs[e \in E_F][H] = \vs[e][F] \quad \vs[e_{\id 0}] = \{\proj{0}(\vs[\interface][F])\} \quad \vs[e_{\id 1}] = \{\proj{1}(\vs[\interface][F])\} \\
  \vt[\interface][H] = \vt[\interface][F] - (\proj{0}(\vt[\interface][F]) + \proj{1}(\vt[\interface][F])) \quad \vt[e \in E_F][H] = \vt[e][F] \quad \vt[e_{\id 0}] = \{\proj{0}(\vt[\interface][F])\} \quad \vt[e_{\id 1}] = \{\proj{1}(\vt[\interface][F])\} \\\\
  L = \trace{1}{\trace{1}{\swap{1}{1} \tensor \id[m] \seq F \seq \swap{1}{1} \tensor \id[n]}} \\
  E_L = E_F \qquad E_H[\id] = \{e_{\id 0}, e_{\id 1}\} \qquad \labels_H = \labels_F \\
  \vs[\interface][H] = [n] \quad \vs[e \in E_F][H] = \vs[e][F] \quad \vs[e_{\id 0}] = [1]_{s0} \quad \vs[e_{\id 1}] = [1]_{s1} \\
  \vt[\interface][H] = [m] \quad \vt[e \in E_F][H] = \vt[e][F] \quad \vt[e_{\id 0}] = [1]_{t0} \quad \vs[e_{\id 1}] = [1]_{t1} \\
  \vconnsr_H(t) = \begin{cases}
    \vconnsr_F(\proj{1}(\vt[\interface][F])) & \text{if}\ t = [1]_{t0} \\
    \vconnsr_F(\proj{0}(\vt[\interface][F])) & \text{if}\ t = [1]_{t1} \\
    \vconnsr_F(\proj{i + 2}(\vt[\interface][F])) & \text{if}\ t = \proj{i}([m]) \\
    \proj{0}([1]_{s1}) & \text{if}\ \vconnsr_F(t) = \proj{0}(\vs[\interface][F]) \\
    \proj{0}([1]_{s0}) & \text{if}\ \vconnsr_F(t) = \proj{1}(\vs[\interface][F]) \\
    \proj{i-2}([n]) & \text{if}\ \vconnsr_F(t) = \proj{i}(\vs[\interface][F]) \\
    \vconnsr_F(t) & \text{otherwise}
  \end{cases}
\end{gather*}

\subsubsection*{Equivalence maps}
\begin{gather*}
  \vmaps{h}(v) = \begin{cases}
    \proj{0}([1]_{s0}) & \text{if}\ v = \proj{0}(\vs[\interface][F]) \\
    \proj{0}([1]_{s1}) & \text{if}\ v = \proj{1}(\vs[\interface][F]) \\
    \proj{i-2}([n]) & \text{if}\ v = \proj{i}(\vs[\interface][F]) \\
    v & \text{otherwise}
  \end{cases}
  \qquad
  \vmapt{h}(v) = \begin{cases}
    \proj{0}([1]_{t0}) & \text{if}\ v = \proj{0}(\vt[\interface][F]) \\
    \proj{0}([1]_{t1}) & \text{if}\ v = \proj{1}(\vt[\interface][F]) \\
    \proj{i-2}([n]) & \text{if}\ v = \proj{i}(\vt[\interface][F]) \\
    v & \text{otherwise}
  \end{cases} 
  \qquad
  \emap{h} = \id
\end{gather*}

\subsubsection*{Inductive case I: $x = k+1$, $y=1$}
\[\trace{1}{\trace{k+1}{F}} \equiv \trace{k+1}{\trace{1}{\swap{1}{k+1} \tensor \id[m] \seq \swap{k+1}{1} \tensor \id[m]}}\]
\begin{align*}
  \trace{1}{\trace{k+1}{F}} & \equiv \trace{1}{\trace{1}{\trace{k}{F}}} \\
                            & \qquad\qquad \text{definition of trace} \\
                            & \equiv \trace{1}{\trace{1}{\swap{1}{1} \tensor \id[m] \seq \trace{k}{F} \seq \swap{1}{1} \tensor \id[n]}} \\ 
                            & \qquad\qquad \text{base case} \\
                            & \equiv \trace{1}{\trace{1}{\trace{k}{\id[k] \tensor \swap{1}{1} \tensor \id[m] \seq F \seq \id[k] \tensor \swap{1}{1} \tensor \id[n]}}} \\ 
                            & \qquad\qquad \text{tightening} \\
                            & \equiv \trace{1}{\trace{k}{\trace{1}{\swap{1}{k} \tensor \id[1] \tensor \id[m] \seq \id[k] \tensor \swap{1}{1} \tensor \id[m] \seq F \seq \id[k] \tensor \swap{1}{1} \tensor \id[n] \seq \swap{k}{1} \tensor \id[1] \tensor \id[n]}}} \\
                            & \qquad\qquad \text{IH} \\
                            & \equiv \trace{k+1}{\trace{1}{\swap{1}{k} \tensor \id[1] \tensor \id[m] \seq \id[k] \tensor \swap{1}{1} \tensor \id[m] \seq F \seq \id[k] \tensor \swap{1}{1} \tensor \id[n] \seq \swap{k}{1} \tensor \id[1] \tensor \id[n]}} \\ 
                            & \qquad\qquad \text{definition of trace}\\
                            & \equiv \trace{k+1}{\trace{1}{(\swap{1}{k} \tensor \id[1] \seq \id[k] \tensor \swap{1}{1}) \tensor \id[m] \seq F \seq (\id[k] \tensor \swap{1}{1} \seq \swap{k}{1} \tensor \id[1])\tensor \id[n]}} \\ 
                            & \qquad\qquad \text{bifunctoriality II} \\
                            & \equiv \trace{k+1}{\trace{1}{\swap{1}{k+1} \tensor \id[m] \seq F \seq \swap{k+1}{1} \tensor \id[n]}} \\ 
                            & \qquad\qquad \text{definition of swap}
\end{align*}

\subsubsection*{Inductive case II: $x = 1$, $y = k'+1$}
\[\trace{k'+1}{\trace{1}{F}} \equiv \trace{1}{\trace{k'+1}{\swap{k'+1}{1} \tensor \id[m] \seq F \seq \swap{1}{k'+1} \tensor \id[n]}}\]
\begin{align*}
  \trace{k'+1}{\trace{1}{F}} & \equiv \trace{1}{\trace{k'}{\trace{1}{F}}} \\
                            & \qquad\qquad \text{definition of trace} \\
                            & \equiv \trace{1}{\trace{1}{\trace{k'}{\swap{k'}{1} \tensor \id[1] \tensor \id[m] \seq F \seq \swap{1}{k'} \tensor \id[1] \tensor \id[n]}}} \\
                            & \qquad\qquad \text{IH} \\
                            & \equiv \trace{1}{\trace{1}{\swap{1}{1} \tensor \id[m] \seq \trace{k'}{\swap{k'}{1} \tensor \id[1] \tensor \id[m] \seq F \seq \swap{1}{k'} \tensor \id[1] \tensor \id[n]} \seq \swap{1}{1} \tensor \id[n]}} \\
                            & \qquad\qquad \text{Base case} \\
                            & \equiv \trace{1}{\trace{1}{\trace{k'}{\id[k'] \tensor \swap{1}{1} \tensor \id[m] \seq \swap{k'}{1} \tensor \id[1] \tensor \id[m] \seq F \seq \swap{1}{k'} \tensor \id[1] \tensor \id[n] \seq \id[k'] \tensor \swap{1}{1} \tensor \id[n]}}} \\ 
                            & \qquad\qquad \text{tightening} \\ 
                            & \equiv \trace{1}{\trace{1}{\trace{k'}{(\id[k'] \tensor \swap{1}{1} \seq \swap{k'}{1} \tensor \id[1]) \tensor \id[m] \seq F \seq (\swap{1}{k'} \tensor \id[1] \seq \id[k'] \tensor \swap{1}{1}) \tensor \id[n]}}} \\ 
                            & \qquad\qquad \text{bifunctoriality II} \\
                            & \equiv \trace{1}{\trace{1}{\trace{k'}{\swap{k'+1}{1} \tensor \id[m] \seq F \seq \swap{1}{k'+1} \tensor \id[n]}}} \\ 
                            & \qquad\qquad \text{definition of swap} 
\end{align*}

\subsubsection*{Inductive case III: $x = k + 1$, $y = k' + 1$}
\[\trace{k'+1}{\trace{k+1}{F}} \equiv \trace{k+1}{\trace{k'+1}{\swap{k'+1}{k+1} \tensor \id[m] \seq F \seq \swap{k+1}{k'+1} \tensor \id[n]}}\]
\begin{align*}
  & \trace{k'+1}{\trace{k+1}{F}} \\
  & \equiv \trace{k'+1}{\trace{1}{\trace{k}{F}}} \\
                               & \qquad\qquad \text{definition of trace} \\
                               & \equiv \trace{1}{\trace{k'+1}{\swap{k'+1}{1} \tensor \id[m] \seq \trace{k}{F} \seq \swap{1}{k'+1} \tensor \id[n]}} \\
                               & \qquad\qquad \text{inductive case II} \\
                               & \equiv \trace{1}{\trace{k'+1}{\trace{k}{\id[k] \tensor \swap{k'+1}{1} \tensor \id[m] \seq F \seq \id[k] \tensor \swap{1}{k'+1} \tensor \id[n]}}} \\
                               & \qquad\qquad \text{tightening} \\
                               & \equiv \trace{1}{\trace{1}{\trace{k'}{\trace{k}{\id[k] \tensor \swap{k'+1}{1} \tensor \id[m] \seq F \seq \id[k] \tensor \swap{1}{k'+1} \tensor \id[n]}}}} \\
                               & \qquad\qquad \text{definition of trace} \\
                               & \equiv \trace{1}{\trace{1}{\trace{k}{\trace{k'}{\swap{k'}{k} \tensor \id_{1+1+m} \seq \id[k] \tensor \swap{k'+1}{1} \tensor \id[m] \seq F \seq \id[k] \tensor \swap{1}{k'+1} \tensor \id[n] \seq \swap{k}{k'} \tensor \id_{1+1+n}}}}} \\
                               & \qquad\qquad \text{IH} \\
                               & \equiv \trace{1}{\trace{k+1}{\trace{k'}{\swap{k'}{k} \tensor \id_{1+1+m} \seq \id[k] \tensor \swap{k'+1}{1} \tensor \id[m] \seq F \seq \id[k] \tensor \swap{1}{k'+1} \tensor \id[n] \seq \swap{k}{k'} \tensor \id_{1+1+n}}}} \\
                               & \qquad\qquad \text{definition of trace} \\
                               & \equiv \trace{1}{\trace{k+1}{\trace{k'}{(\swap{k'}{k} \tensor \id_{1+1} \seq \id[k] \tensor \swap{k'+1}{1}) \tensor \id_{m} \seq F \seq (\id[k] \tensor \swap{1}{k'+1} \seq \swap{k}{k'} \tensor \id_{1+1}) \tensor \id_{n}}}} \\
                               & \qquad\qquad \text{bifunctoriality I/II} \\
                               & \equiv \trace{k+1}{\trace{1}{\swap{1}{k+1} \tensor \id[m] \seq \trace{k'}{(\swap{k'}{k} \tensor \id_{1+1} \seq \id[k] \tensor \swap{k'+1}{1}) \tensor \id_{m} \seq F \seq (\id[k] \tensor \swap{1}{k'+1} \seq \swap{k}{k'} \tensor \id_{1+1}) \tensor \id_{n}} \seq \swap{k+1}{1} \tensor \id[m]}} \\
                               & \qquad\qquad \text{inductive case I} \\
                               & \equiv \text{Tr}^{k+1}(\text{Tr}^{1}(\text{Tr}^{k'}(\id_{k'} \tensor \swap{1}{k+1} \tensor \id[m] \seq (\swap{k'}{k} \tensor \id_{1+1} \seq \id[k] \tensor \swap{k'+1}{1}) \tensor \id_{m} \seq F \,\seq \\ &\qquad\qquad \text{tightening} \qquad\qquad\qquad\qquad\qquad\qquad\qquad (\id[k] \tensor \swap{1}{k'+1} \seq \swap{k}{k'} \tensor \id_{1+1}) \tensor \id_{n} \seq \id_{k'} \tensor \swap{k+1}{1} \tensor \id[m]))) \\
                               & \equiv \text{Tr}^{k+1}(\text{Tr}^{k' + 1}(\id_{k'} \tensor \swap{1}{k+1} \tensor \id[m] \seq (\swap{k'}{k} \tensor \id_{1+1} \seq \id[k] \tensor \swap{k'+1}{1}) \tensor \id_{m} \seq F \,\seq \\ &\qquad\qquad \text{definition of trace} \qquad\qquad\qquad\qquad\qquad\quad (\id[k] \tensor \swap{1}{k'+1} \seq \swap{k}{k'} \tensor \id_{1+1}) \tensor \id_{n} \seq \id_{k'} \tensor \swap{k+1}{1} \tensor \id[n])) \\
                               & \equiv \text{Tr}^{k+1}(\text{Tr}^{k' + 1}((\id_{k'} \tensor \swap{1}{k+1} \seq \swap{k'}{k} \tensor \id_{1+1} \seq \id[k] \tensor \swap{k'+1}{1}) \tensor \id_{m} \seq F \seq (\id[k] \tensor \swap{1}{k'+1} \seq \swap{k}{k'} \tensor \id_{1+1} \seq \id_{k'} \tensor \swap{k+1}{1}) \tensor \id[n])) \\ &\qquad\qquad \text{bifunctoriality II} \\
                               & \equiv \text{Tr}^{k+1}(\text{Tr}^{k' + 1}(\swap{k'+1}{k+1} \tensor \id_{m} \seq F \seq \swap{k+1}{k'+1} \tensor \id[n])) \\ &\qquad\qquad \text{definition of swap} 
\end{align*}
\end{document}